\theoremstyle{plain}
\newtheorem{satz}{Theorem}[section]
\newtheorem{lem}[satz]{Lemma}
\newtheorem{koro}[satz]{Corollary}
\newtheorem{prop}[satz]{Proposition}
\theoremstyle{definition}
\newtheorem{defi}[satz]{Definition}
\newtheorem{bem}[satz]{Remark}
\newtheorem{bsp}[satz]{Example}
\newtheorem{ass}[satz]{Assumption}
\g@addto@macro\bfseries{\boldmath}
\definecolor{darkgreen}{rgb}{0.0, 0.5, 0.0}
\definecolor{darkred}{rgb}{0.5, 0, 0.0}
\newcommand{\id}{\operatorname{id}}
\newcommand{\JBeps}{J_B^{(\varepsilon)}}
\newcommand{\vol}{\widehat{\mathrm{vol}}}
\newcommand{\ngm}{\text{\ng}}
\newcommand{\dt}[1]{\ensuremath{\operatorname{d}\!#1}}
\newcommand{\R}{\mathbb{R}}
\newcommand{\abs}[1]{\ensuremath{\left\vert #1\right\vert}}
\newcommand{\spann}[1]{\ensuremath{\left\langle #1\right\rangle}}
\newcommand{\norm}[1]{\ensuremath{\left\| #1\right\|}}
\newcommand{\maxd}{\operatorname{max}}
\newcommand{\dom}{\operatorname{dom}}
\newcommand{\res}{\operatorname{res}}
\newcommand{\Rea}{\operatorname{Re}}
\newcommand{\APS}{\mathrm{APS}}
\newcommand{\cc}{\mathrm{cc}}
\renewcommand{\c}{\mathrm{c}}
\newcommand{\SM}[2]{\ensuremath{\spann{#1,#2}_{SM}}}
\newcommand{\Mat}[1] {
	\begin{pmatrix}
		\Mat@r #1;\@bye;\Mat@r
	\end{pmatrix}
}
\def\Mat@r #1;{\@bye #1\Mat@z\@bye\Mat@s #1,\@bye, }%
\def\Mat@s #1,{#1\Mat@t }%
\def\Mat@t #1,{\@bye #1\Mat@y\@bye\@firstofone {&#1}\Mat@t}%
\def\Mat@y #1\Mat@t{\\ \Mat@r }
\def\Mat@z #1\Mat@r {}
\def\@bye #1\@bye {}
\title[Cauchy problem under non-local boundary conditions]{The Cauchy problem for Lorentzian Dirac operators under non-local boundary conditions}
\author{Christian Bär}
\address{Department of Mathematics, University of Potsdam, 14476 Potsdam, Germany}
\email{cbaer@uni-potsdam.de}
\author{Penelope Gehring}
\address{Max-Planck Institute for Gravitational Physics, 14476 Potsdam, Germany}
\email{penelope.gehring@aei.mpg.de}
\begin{document}
\maketitle

\begin{abstract}

Non-local boundary conditions, such as the Atiyah-Patodi-Singer (APS) conditions, for Dirac operators on Riemannian manifolds are well under\-stood while not much is known for such operators on spacetimes with timelike boundary.
We define a class of Lorentzian boundary conditions that are local in time and non-local in the spatial directions and show that they lead to a well-posed Cauchy problem for the Dirac operator.
This applies in particular to the APS conditions imposed on each level set of a given Cauchy temporal function.
\end{abstract}

\section*{Introduction}

In General Relativity spacetime is mathematically modeled by a, generally curved, Lorentzian manifold.
To describe wave propagation we must be able to solve initial value problems for wave equations.
If the spacetime has no boundary, this is always possible if the underlying manifold is \emph{globally hyperbolic}.
This means that there exist Cauchy hypersurfaces on which initial values can be imposed.
Well-posedness of the Cauchy problem for linear second order equations can e.g.\ be found in \cite{Waves}.
It is then not hard to also deduce it for Dirac equations, which are of first order.

Assuming for a moment that the spacetime is spatially compact, i.e.\ the Cauchy hypersurfaces are compact manifolds without boundary, we can consider the spacetime region between two (smooth spacelike) Cauchy hypersurfaces, one lying in the future of the other.
The Cauchy hypersurfaces being closed Riemannian manifolds, their Dirac operators are elliptic and, in the selfadjoint case, spectral projectors make sense.
We can therefore impose the famous Atiyah-Patodi-Singer (APS) boundary conditions on both Cauchy hypersurfaces.
Bär and Strohmaier showed that these spatial boundary conditions turn the Dirac operator into a Fredholm operator and gave a geometric index formula (\cites{BS1,BS3}).
As an application, they computed the chiral anomaly in algebraic quantum field theory on curved spacetimes in \cite{BS2}.
Bär and Hannes~\cite{BH} investigated to what extend these boundary conditions can be replaced by more general ones and how the index then changes.
An analogous result to~\cite{BS1} was obtained by Shen and Wrochna~(\cite{SW}) for asymptotically static spacetimes with only one spacelike Cauchy boundary hypersurface.

The Anti-de Sitter (AdS) and the asymptotically AdS spacetimes became increasingly important in recent years - especially in the context of studying the properties of Green-hyperbolic operators like the wave, the Klein-Gordon, or the Dirac operator; see for example \cite{Bachelot,Holzegel, Wrochna,Vasy}.
Green hyperbolic here means that advanced and retarded Green's operators exist; this can be deduced from well-posedness of the Cauchy problem.
The manifolds considered in these results are spacetimes with timelike boundary, i.e.\ the Cauchy hypersurfaces have a boundary themselves.
Further studies of Green-hyperbolic operators on such spacetimes were accomplished, for example, in \cite{DDF,DDL,DFH,GM,MG}.
These results are concerned with local boundary conditions.
Since analytically reasonable local boundary conditions do not always exist for first-order operators, it is necessary to study global boundary conditions such as APS boundary conditions.

APS boundary conditions cannot be imposed on the boundary of the spacetime itself because this boundary is Lorentzian rather than Riemannian.
To overcome this difficulty, Drago, Große and Murro suggested in \cite{DGM} to choose a time function and to impose APS on each level set.
They stated well-posedness of the Cauchy problem for the spinorial Dirac operator under slicewise APS conditions assuming a few further conditions on the geometry of the spacetime.
However, it seems that there is a gap in their existence proof.

We also take this local-in-time, global-in-space approach and fix a time function.
The boundary conditions which we impose on the level sets need not be APS but APS is included in our class of admissible boundary conditions.
Another prominent example consists of the transmission conditions - this may potentially allow for cut-and-paste arguments.

In Section~\ref{EllipticBoundary}, we recall the most important facts on elliptic boundary problems for Dirac-type operators on compact Riemannian manifolds with boundary.
This theory was initiated by Atiyah, Patodi, and Singer in their famous paper \cite{APS}.
Here they introduced the APS boundary conditions.
There is a huge literature on this topic.
We follow the approach laid out by Bär and Ballmann in \cites{BB1, BB2} because it describes all possible boundary conditions systematically.

This describes what can be done on a fixed spacelike Cauchy hypersurface.
We then discuss families of boundary value problems in Section~\ref{ConFunCal}.
We need to study such families as the time function sweeps out our spacetime.

Next,we recall the most important facts of spin geometry and the spinorial Dirac operator on spacetimes with timelike boundary in Section~\ref{sec:preliminaries}.
We define what we mean by an admissible boundary conditions.

Section~\ref{sec:IBVP} is the core of the present paper.
Here we prove well-posedness of the Cauchy problem for the spinorial Dirac operator under admissible boundary conditions, see Theorem~\ref{main}.

We apply these results in Section~\ref{sec:propagation} and study the support of the solutions to the Cauchy problem.
The result essentially says that a wave propagates with the speed of light at most.
There is an interesting violation of this principle, however.
As soon as the wave hits the boundary somewhere, the \emph{whole} boundary radiates off instantenously (w.r.t.\ the given time function).
This is due to the global nature of the boundary conditions and violates the causal principle that no signal should propagate fast than with the speed of light.
We show by example that this effect really occurs.

Then we use well-posedness together with finite propagation speed to construct advanced and retarded Green's operators.

We conclude the paper by discussing some examples in Section~\ref{sec:examples}.
In particular, we introduce Grassmannian boundary conditions which are special admissible boundary conditions, but the assumptions are easier to check in practice, see Theorem~\ref{main:Grassmannian}.
APS boundary conditions fall into this class.

\bigskip

\paragraph*{\emph{Acknowledgments.}} We would like to thank Lashi Bandara, Nicol\'{o} Drago, Nadine Große, Sebastian Hannes, Rubens Longhi, Jan Metzger, Simone Murro, Miguel S\'anchez, and Mehran Seyedhosseini for helpful discussions.

This research is supported by the International Max Planck Research School for Mathematical and Physical Aspects of Gravitation, Cosmology and Quantum Field Theory and by the focus program on Geometry at Infinity (SPP~2026 funded by Deutsche Forschungsgemeinschaft).

\section{Boundary conditions for Dirac-type operators on Riemannian manifolds}
\label{EllipticBoundary}

In this preliminary section we collect the relevant facts about boundary conditions for Dirac-type operators.
Most of the material is well known which is why we just display the results without proof.
We mostly follow \cite{BB1,BB2}.
In \cite{BB3} the theory is developed for general elliptic first-order operators, but we will not need this level of generality.

\medskip
Throughout this section let $(\Sigma,g)$ be a compact Riemannian manifold with smooth boundary $\partial\Sigma$.
We denote its unit conormal field by $\ngm$.
Furthermore, let $(E,h_E)\to \Sigma$ be a Hermitian vector bundle.
Recall that a differential operator $D\colon C^\infty(\Sigma,E)\to C^\infty(\Sigma,E)$ of order one is said to be of \emph{Dirac type} if its principal symbol $\sigma_D$ satisfies the \emph{Clifford relations}
	\begin{align*}
		\sigma_D(\zeta)\sigma_D(\xi)+\sigma_D(\xi)\sigma_D(\zeta)=-2g(\zeta,\xi)\cdot \id_{E_x},
	\end{align*}
for all $x\in \Sigma$ and $\zeta,\xi\in T^\ast_x \Sigma$.

\medskip
The Riemannian spin Dirac operator acting on spinor fields is an important example of a Dirac-type operator.
More generally, Dirac operators in the sense of Gromov and Lawson as introduced in \cite{GL,LM} are of Dirac-type.

\medskip
Dirac-type operators are elliptic, i.e.\ $\sigma_D(\zeta)$ is invertible for all $x\in \Sigma$ and all $\zeta\in T_x ^\ast \Sigma\setminus\{0\}$, since by the Clifford relations the inverse is explicitly given as
\begin{align*}
	\sigma_D(\zeta)^{-1}=-\abs{\zeta}_g^{-2}\sigma_D(\zeta).
\end{align*}
Before discussing boundary conditions for Dirac-type operators, let us spend some time on introducing the maximal and minimal domain of an operator and the range of the restriction map to the boundary mapping from these domains.

\medskip
The \emph{maximal extension} $D_{\maxd}$ is defined to be the distributional extension of $D$, restricted to the \emph{maximal domain}
\[
\dom(D_{\maxd}):=\{\psi\in L^2(\Sigma,E);~D\psi\in L^2(\Sigma,E)\}.
\]
The maximal domain together with the \emph{graph norm} defined by
\[
\norm{\cdot}_D^2:=\norm{D\cdot}^2_{L^2}+\norm{\cdot}^2_{L^2}
\]
forms a Banach space.

The \emph{minimal extension} $D_{\operatorname{min}}$ is the closure of $D$ when $D$ is given the domain $C^\infty_{\cc}(\Sigma,E)$.
Here $C^\infty_{\cc}(\Sigma,E)$ is the space of smooth sections with compact support contained in the interior of $\Sigma$, i.e.\ $\operatorname{supp}\psi\cap \partial \Sigma=\emptyset$ for $\psi\in C^\infty_{\cc}(\Sigma,E)$.
Hence the domain of $D_{\operatorname{min}}$, called the \emph{minimal domain}, is the closure of $C^\infty_{\cc}(\Sigma,E)$ with respect to the graph norm of $D$.
The minimal domain $\operatorname{dom}(D_{\operatorname{min}})$ equipped with the graph norm $\norm{\cdot}_D$ is a Banach space as well.

\begin{defi}\label{boundaryOp}
	A first-order differential operator $A:C^\infty(\partial\Sigma,E|_{\partial\Sigma})\to C^\infty(\partial\Sigma,E|_{\partial\Sigma})$ is called a \emph{boundary operator} for $D$ if its principal symbol is given by
	\[
	\sigma_A(x,\zeta)=\sigma_D(x,\ngm(x))^{-1}\circ\sigma_D(x,\zeta)
	\]
	for all $x\in\partial\Sigma$ and $\zeta\in T^\ast_x\partial \Sigma$.
\end{defi}
\begin{bem}
	\begin{enumerate}[label=(\alph*), wide, labelwidth=!, labelindent=0pt]
		\item
		The boundary operator is unique up to zero-order terms.
		It can be chosen such that it is selfadjoint and that it anticommutes with $\sigma_D(\ngm)$, see Lemma~2.2 in \cite{BB2}.
		For Dirac operators in the sense of Gromov and Lawson there is a natural choice of a selfadjoint boundary operator $A$ which anticommutes with $\sigma_D(\ngm)$.
		This operator has a lower order term involving the mean curvature of the boundary, see for example \cite{BB2}.
		For the spin Dirac operator this can also be seen directly, see~\eqref{eq:closedDirac}.
		\item
		Since $\partial\Sigma$ is compact and without boundary, the boundary operator $A$, if chosen selfadjoint, has discrete real spectrum.
		The case of noncompact $\partial \Sigma$ has been studied in \cite{GN} under suitable geometric conditions.
	\end{enumerate}
\end{bem}
\begin{ass}
For the rest of this section, we assume that $D$ is a selfadjoint Dirac-type operator and that $A$ is a selfadjoint boundary operator for $D$ which anticommutes with $\sigma_D(\ngm)$.
\end{ass}

Let $\chi^+(A)\colon L^2(\partial\Sigma,E|_{\partial\Sigma})\to L^2(\partial\Sigma,E|_{\partial\Sigma})$ and $\chi^-(A)\colon L^2(\partial\Sigma,E|_{\partial\Sigma})\to L^2(\partial\Sigma,E|_{\partial\Sigma})$ be the spectral projections onto the spectral subspaces corresponding to the positive and the nonpositive eigenvalues of $A$, respectively.
These projections are pseudo-differential operators of order zero and therefore
\[
\chi^\pm(A)H^s(\partial\Sigma,E|_{\partial\Sigma})
\]
are closed subspaces of the Sobolev spaces $H^s(\partial\Sigma,E|_{\partial\Sigma})$ for all $s\in\mathbb{R}$.
\begin{defi}
	We define the \emph{check space} corresponding to the boundary operator $A$ as
	\[
	\check{H}(A):=\chi^-(A)H^{\frac 12}(\partial\Sigma,E|_{\partial\Sigma})\oplus \chi^+(A)H^{-\frac 12}(\partial\Sigma,E|_{\partial\Sigma}),
	\]
	with norm
	\[
	\norm{\psi}^2_{\check{H}(A)}:=\norm{\chi^-(A)\psi}^2_{H^{\frac 12}}+\norm{\chi^+(A)\psi}^2_{H^{-\frac 12}}.
	\]
\end{defi}
The check space arises naturally as the image of the extension of the trace map $R\colon C^\infty(\Sigma,E)\to C^\infty(\partial \Sigma,E|_{\partial\Sigma}),~\psi\mapsto \psi|_{\partial\Sigma}$ to the maximal domain of $A$.
More precisely:
\begin{satz}[Theorem~6.7 in \cite{BB1}] \label{BBMain}
	The following holds:
	\begin{enumerate}[label=(\arabic*), labelwidth=!, labelindent=2pt, leftmargin=21pt]
		\item $C^\infty(\Sigma,E)$ is dense in $\operatorname{dom}(D_{\maxd})$ with respect to $\norm{\cdot}_D$.
		\item The trace map extends uniquely to a continuous surjection
		\[
		R\colon \operatorname{dom}(D_{\maxd})\to \check{H}(A)
		\]
		with kernel $\operatorname{ker}R=\dom(D_{\operatorname{min}})$.
		In particular, $R$ induces an isomorphism
		\[
		\check{H}(A)\cong \dfrac{\dom(D_{\maxd})}{\dom(D_{\operatorname{min}})}.
		\]
		\item For all $\phi,\psi\in\operatorname{dom}(D_{\maxd})$
		\begin{align}\label{RiemGreen}
			\int_\Sigma h_E(D_{\maxd}\phi,\psi)-h_E(\phi,D_{\maxd}\psi)\dt{\mu}_{\Sigma}=-\int_{\partial\Sigma} h_E(\sigma_D(\ngm)R\phi,R\psi)\dt{\mu}_{\partial\Sigma}.
		\end{align}
		\item $H^1(\Sigma,E)\cap \dom(D_{\maxd})=\{\psi\in \operatorname{dom}(D_{\maxd});~R\psi\in H^{\frac 12}(\partial\Sigma,E|_{\partial\Sigma})\}$.

	\end{enumerate}
\end{satz}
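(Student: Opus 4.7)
The plan is to work in a collar neighborhood $U\cong [0,\varepsilon)\times\partial\Sigma$ of the boundary. Using parallel transport along the inward normal geodesics, we identify $E|_U$ with the pullback of $E|_{\partial\Sigma}$ and bring $D$ into the normal form
\[
D = \sigma_D(\ngm)\bigl(\partial_t + A_t + R_t\bigr),
\]
where $A_t$ is a smooth $t$-family of selfadjoint first-order boundary operators with $A_0=A$ modulo zeroth-order terms and $R_t$ is a zeroth-order remainder. A partition of unity reduces each of (1)--(4) to two situations: (a) sections supported in the interior, handled by standard mollifier density and interior elliptic regularity, and (b) sections supported in the collar, where all of the real work lies.

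In the collar I would exploit the spectral decomposition of the model operator $A$. Expanding $\psi(t,\cdot)=\sum_k a_k(t)\phi_k$ in an $L^2$-eigenbasis $A\phi_k=\lambda_k\phi_k$, the equation $D\psi=f\in L^2$ becomes, to leading order, a decoupled ODE system $\dot a_k+\lambda_k a_k=b_k$. Solving it explicitly and imposing $L^2$ in $t$ shows the key asymmetry: for $\lambda_k\gg 0$ the boundary value $a_k(0)$ is essentially free, controlled only after an $H^{-1/2}$-type weighting in $\lambda_k$, whereas for $\lambda_k\ll 0$ the $L^2$-condition forces $a_k(0)$ to be controlled in an $H^{1/2}$-type weighting. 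This is precisely the scaling that produces $\check H(A)=\chi^-(A)H^{\frac12}\oplus\chi^+(A)H^{-\frac12}$ as the natural trace image. Part (1) then follows by truncating the spectral expansion in $k$ and mollifying in $t$, verifying graph-norm convergence. For (2), one first establishes $\norm{R\psi}_{\check H(A)}\lesssim\norm{\psi}_D$ on $C^\infty(\Sigma,E)$, extends $R$ by continuity using (1), constructs a bounded right inverse from the Fourier picture to obtain surjectivity, and identifies $\ker R$ with $\dom(D_{\operatorname{min}})$ by approximating such $\psi$ by sections supported away from $\partial\Sigma$.

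Green's formula (3) is immediate for $\phi,\psi\in C^\infty(\Sigma,E)$ by integration by parts, using formal selfadjointness of $D$ together with the algebraic identity $\sigma_D(\ngm)^*=-\sigma_D(\ngm)$ forced by the Clifford relation. It extends to $\dom(D_{\maxd})$ by density (1) combined with continuity of $R$ from (2); the boundary pairing is well defined on $\check H(A)$ because $\sigma_D(\ngm)$ anticommutes with $A$ and hence exchanges $\chi^\pm(A)$, so $h_E(\sigma_D(\ngm)R\phi,R\psi)$ duality-pairs $H^{\frac12}$ with $H^{-\frac12}$. Claim (4) is a regularity statement: if $\psi\in\dom(D_{\maxd})$ and $R\psi\in H^{\frac12}$, then every spectral coefficient satisfies an ODE with $H^{\frac12}$ initial data and $L^2$ right-hand side, yielding $H^1$-regularity in the collar, while interior elliptic regularity handles the rest.

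The hard part is the sharp identification of the image of $R$ as $\check H(A)$ and the proof of surjectivity. The asymmetric $(H^{\frac12},H^{-\frac12})$ regularity on the two spectral halves of $A$ must be derived from $L^2$-finiteness of $\psi$ and $D\psi$ alone, without any a priori boundary regularity, and the same argument has to respect the $t$-dependence of $A_t$ and $R_t$. The most robust way to upgrade the constant-coefficient model estimates coming from the Fourier picture to the genuine operator is a Volterra-type iteration (or an equivalent parametrix construction) that absorbs $R_t$ and $A_t-A_0$ as perturbations; once this estimate is in hand the remaining structural statements of the theorem follow fairly cleanly.
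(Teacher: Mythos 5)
This theorem is not proved in the paper at all---it is quoted as Theorem~6.7 of \cite{BB1}---so the only comparison available is with B\"ar--Ballmann's original argument, and your sketch follows essentially that same route: a collar normal form $\sigma_D(\ngm)(\partial_t+A_t+\text{l.o.t.})$, spectral ODE analysis of the model operator producing the asymmetric $\chi^-(A)H^{\frac12}\oplus\chi^+(A)H^{-\frac12}$ control of boundary values, density via spectral truncation, a bounded right inverse for surjectivity, Green's formula by density, and a perturbation step to pass from the constant-coefficient model to the actual operator. At the level of detail given the outline is correct (including the direction of the $H^{\pm\frac12}$ asymmetry), and the points you flag as hard---density of smooth sections in $\dom(D_{\maxd})$ and the variable-coefficient upgrade---are precisely where \cite{BB1} invests its technical work.
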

Note that the RHS of \eqref{RiemGreen} is well defined because $\sigma_D(\ngm)$ maps $\check{H}(A)$ to $\check{H}(-A)$, since it anticommutes with $A$.
Theorem~\ref{BBMain} gives us the necessary tools to define boundary conditions.

\begin{defi}\label{def:bound}
	A \emph{boundary condition} is a closed linear subspace $B\subseteq \check{H}(A)$.
	 The domains of the associated operators are
	\begin{align*}
		\dom (D_{\maxd,B})&:=\{\psi\in \dom(D_{\maxd});~R\psi\in B\},~~\text{and} \\
		\dom(D_B)&:=\{\psi\in\operatorname{dom}(D_{\maxd})\cap H^1(\Sigma,E);~R\psi\in B\}.
	\end{align*}
\end{defi}

We have a 1-1 relation between boundary conditions and closed extensions of $D$ between the minimal and the maximal extension.
Moreover, $(\dom(D_{\maxd,B}),\norm{\cdot}_D)$ is a Banach space for any boundary condition $B$.
A boundary condition $B$ satisfies $B\subseteq H^{\frac 12}(\partial\Sigma,E|_{\partial\Sigma})$ if and only if $D_B=D_{\maxd,B}$.
Motivated by \eqref{RiemGreen}, the \emph{boundary condition adjoint to} $B$ is defined as
\[
B^\ast
:=
\Big\{\phi\in\check{H}(A);~\int_{\partial\Sigma}h_E(\sigma_D(\ngm)\psi,\phi)\dt{\mu}_{\partial\Sigma}=0~\forall\,\psi\in B\Big\}.
\]
We call a boundary condition $B$ \emph{selfadjoint} if $B^\ast=B$.
By Subsection~7.2 in \cite{BB1}, the domain of the adjoint of $D_{\maxd,B}$ is given by
	\[
	\dom((D_{\maxd,B})^\ast)
	=
	\{\psi\in\operatorname{dom}(D_{\maxd});\,\psi|_{\partial\Sigma}\in B^\ast\}=\dom(D_{\maxd,B^\ast}).
	\]
In particular, if $B$ is selfadjoint boundary condition then $D_{\maxd,B}$ is a selfadjoint operator.

\begin{defi}
	Let $B\subseteq H^{\frac 12}(\partial \Sigma,E|_{\partial\Sigma})$ be a  linear subspace such that $B\subseteq \check{H}(A)$ is closed and  $B^\ast\subseteq H^{\frac{1}{2}}(\partial\Sigma,E|_{\partial\Sigma})$, then $B$ is called \emph{elliptic}.
\end{defi}

\begin{satz}[Theorem~7.11 \cite{BB1}] \label{thm:ell}
	Let $B\subseteq H^{\frac 12}(\partial\Sigma,E|_{\partial\Sigma})$ be a linear subspace.
	Then the following are equivalent:
	\begin{enumerate}[label=(\arabic*), labelwidth=!, labelindent=2pt, leftmargin=21pt]
		\item $\operatorname{dom}(D_{\maxd,B})\subseteq H^1(\Sigma,E)$ and $\operatorname{dom}(D_{\maxd,B^\ast})\subseteq H^1(\Sigma,E)$;
		\item $B$ is elliptic.
	\end{enumerate}
	Moreover, for any elliptic boundary condition $B$ the adjoint boundary condition $B^\ast$ is elliptic as well.
\end{satz}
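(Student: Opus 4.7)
The direction $(2)\Rightarrow(1)$ should be immediate from Theorem~\ref{BBMain}(4): if $B\subseteq H^{1/2}(\partial\Sigma,E|_{\partial\Sigma})$ and $\psi\in\dom(D_{\maxd,B})$, then $R\psi\in B\subseteq H^{1/2}$, whence $\psi\in H^1(\Sigma,E)$; the same argument applied to $B^{\ast}$ would yield $\dom(D_{\maxd,B^{\ast}})\subseteq H^1(\Sigma,E)$.

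For $(1)\Rightarrow(2)$, I would first dispatch the inclusion $B^{\ast}\subseteq H^{1/2}$ by a one-line lifting: given $\phi\in B^{\ast}\subseteq\check H(A)$, surjectivity of the extended trace (Theorem~\ref{BBMain}(2)) supplies $\psi\in\dom(D_{\maxd})$ with $R\psi=\phi$; by hypothesis $\psi\in\dom(D_{\maxd,B^{\ast}})\subseteq H^1$, and Theorem~\ref{BBMain}(4) forces $\phi=R\psi\in H^{1/2}$.

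The core difficulty will be to show that $B$ is closed in $\check H(A)$. The plan is to combine the bipolar identity $B^{\ast\ast}=\overline B$ (closure in $\check H(A)$) with the adjoint relation $(D_{\maxd,B})^{\ast}=D_{\maxd,B^{\ast}}$ to derive $B=B^{\ast\ast}$. The bipolar identity itself would rest on the non-degeneracy of the pairing $(\phi,\psi)\mapsto\int_{\partial\Sigma}h_E(\sigma_D(\ngm)\phi,\psi)\,\dt{\mu}_{\partial\Sigma}$ on $\check H(A)\times\check H(A)$; this non-degeneracy follows from the spectral splitting $\chi^{\pm}(A)$ together with $\sigma_D(\ngm)A=-A\sigma_D(\ngm)$, since $\sigma_D(\ngm)$ then interchanges $\chi^{\pm}(A)H^{\pm 1/2}$, so that the cross-pairings of the summands become genuine $H^{1/2}\times H^{-1/2}$-dualities. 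Given the bipolar identity, it would suffice to verify $B^{\ast\ast}\subseteq B$: for $\phi\in B^{\ast\ast}$, lift to $\psi\in\dom(D_{\maxd})$ with $R\psi=\phi$; Green's identity~\eqref{RiemGreen} then places $\psi\in\dom\bigl((D_{\maxd,B^{\ast}})^{\ast}\bigr)=\dom(\overline{D_{\maxd,B}})$. To conclude $\psi\in\dom(D_{\maxd,B})$ itself, and hence $\phi\in B$, I would need $D_{\maxd,B}$ to be graph-norm closed, which under hypothesis (1) should follow from an a priori estimate $\|\psi\|_{H^1}\leq C\,\|\psi\|_D$ on $\dom(D_{\maxd,B})$: such an estimate would make the graph norm equivalent to the $H^1$-norm on $\dom(D_{\maxd,B})$, so that completeness is inherited from $H^1$.

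The main obstacle will be precisely this a priori estimate, which constitutes the delicate elliptic-regularity content of the theorem: it requires sharp boundary regularity under the assumption that traces lie in the specified subspace, and is the crux of the Bär--Ballmann analysis in~\cite{BB1}. Once it is in place, the ``moreover'' clause --- that $B^{\ast}$ is elliptic whenever $B$ is --- should follow from the symmetry of (1) combined with the automatic closedness of $B^{\ast}$ as an annihilator with respect to the continuous pairing above, together with $B^{\ast\ast}=\overline B=B\subseteq H^{1/2}$.
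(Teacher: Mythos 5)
The paper itself does not prove this statement at all --- it is imported from \cite{BB1}*{Theorem 7.11} and stated without proof --- so your proposal has to be judged on its own terms. Its peripheral steps are correct and are indeed the natural ones given the quoted toolkit: (2)$\Rightarrow$(1) is immediate from Theorem~\ref{BBMain}~(4); the inclusion $B^{\ast}\subseteq H^{\frac 12}(\partial\Sigma,E|_{\partial\Sigma})$ follows by lifting with the surjectivity in Theorem~\ref{BBMain}~(2) and applying Theorem~\ref{BBMain}~(4) again; and the ``moreover'' clause works as you describe, using closedness of annihilators and $B^{\ast\ast}=\overline{B}=B\subseteq H^{\frac 12}$.

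The genuine gap is in what you call the core difficulty, closedness of $B$ in $\check{H}(A)$, and it is twofold. First, the proposed mechanism is invalid: an estimate $\norm{\psi}_{H^1}\leq C\norm{\psi}_D$ on $\dom(D_{\maxd,B})$ makes the two norms equivalent there, but equivalence of norms on a subspace does not confer completeness; running the graph-Cauchy argument with that estimate only shows that limiting traces lie in the $H^{\frac 12}$-closure of $B$, so you would still need $B$ closed in $H^{\frac 12}$, which is essentially what is to be proved. Second, and decisively, closedness of $B$ cannot follow from (1) under the literal reading of the statement: take an elliptic $B_0$ (e.g.\ APS, $B_0=\chi^-(A)H^{\frac 12}(\partial\Sigma,E|_{\partial\Sigma})$) and let $B\subsetneq B_0$ be the subspace of its smooth elements. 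Then $\dom(D_{\maxd,B})\subseteq\dom(D_{\maxd,B_0})\subseteq H^1(\Sigma,E)$, and since for fixed $\phi$ the pairing $\psi\mapsto\int_{\partial\Sigma}h_E(\sigma_D(\ngm)\psi,\phi)\dt{\mu}_{\partial\Sigma}$ is continuous in the $\check{H}(A)$-norm and $B$ is $\check{H}(A)$-dense in $B_0$, one has $B^{\ast}=B_0^{\ast}$, so $\dom(D_{\maxd,B^{\ast}})\subseteq H^1(\Sigma,E)$ as well; yet $B$ is not closed in $\check{H}(A)$. The resolution is that in \cite{BB1}*{Theorem 7.11} the subspace $B$ is assumed to be a boundary condition in the sense of Definition~\ref{def:bound}, i.e.\ closed in $\check{H}(A)$; the statement as quoted here suppresses that hypothesis. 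With closedness assumed, there is no delicate elliptic-regularity step left at this stage: for any subspace $C\subseteq\check{H}(A)$ the condition $\dom(D_{\maxd,C})\subseteq H^1(\Sigma,E)$ is equivalent to $C\subseteq H^{\frac 12}(\partial\Sigma,E|_{\partial\Sigma})$ by exactly your two lifting arguments, all the hard analysis being already contained in Theorem~\ref{BBMain}; in particular, the first half of (1) is automatic from $B\subseteq H^{\frac 12}(\partial\Sigma,E|_{\partial\Sigma})$ and carries no information.
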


As a direct consequence we find:
\begin{koro}\label{koro:closed}
	Let $B$ be an elliptic boundary condition.
	Then $\operatorname{dom}(D_{\maxd,B})$ is a closed subspace of  $ H^1(\Sigma,E)$.
	 Moreover, $\norm{\cdot}_D$ and $\norm{\cdot}_{H^1}$ are equivalent on $\operatorname{dom}(D_{\maxd,B})$.
\end{koro}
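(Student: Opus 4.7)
The plan is to combine Theorem~\ref{thm:ell} with closedness arguments and the open mapping theorem. Throughout, write $\mathcal{D}_B := \dom(D_{\maxd,B})$.

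First, I would note that by Theorem~\ref{thm:ell} the hypothesis that $B$ is elliptic gives $\mathcal{D}_B \subseteq H^1(\Sigma,E)$, so it makes sense to ask whether $\mathcal{D}_B$ is closed in $H^1$. The inequality $\norm{\psi}_D \leq C\, \norm{\psi}_{H^1}$ is immediate: since $D$ is a first-order differential operator on the compact manifold $\Sigma$, it extends to a bounded map $H^1(\Sigma,E)\to L^2(\Sigma,E)$, so $\norm{D\psi}_{L^2}^2 + \norm{\psi}_{L^2}^2 \leq C^2\norm{\psi}_{H^1}^2$.

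Next, I would prove that $\mathcal{D}_B$ is closed in $H^1(\Sigma,E)$. Pick a sequence $(\psi_n)\subset \mathcal{D}_B$ with $\psi_n\to \psi$ in $H^1$. By the bounded extension of $D$ just mentioned, $\psi_n\to \psi$ in $L^2$ and $D\psi_n\to D\psi$ in $L^2$, i.e.\ $\psi_n\to \psi$ in the graph norm $\norm{\cdot}_D$. In particular $\psi\in\dom(D_{\maxd})$. By Theorem~\ref{BBMain}(2), the trace $R\colon \dom(D_{\maxd})\to\check{H}(A)$ is continuous with respect to $\norm{\cdot}_D$, so $R\psi_n\to R\psi$ in $\check{H}(A)$. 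Since each $R\psi_n\in B$ and $B\subseteq\check{H}(A)$ is closed by definition of a boundary condition, we conclude $R\psi\in B$, hence $\psi\in\mathcal{D}_B$.

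Finally, I would invoke the open mapping theorem. Being closed in the Banach space $H^1(\Sigma,E)$, the space $\mathcal{D}_B$ equipped with $\norm{\cdot}_{H^1}$ is itself a Banach space. On the other hand, we already know from the discussion following Definition~\ref{def:bound} that $(\mathcal{D}_B,\norm{\cdot}_D)$ is a Banach space. The identity map
\[
\id\colon (\mathcal{D}_B,\norm{\cdot}_{H^1})\longrightarrow (\mathcal{D}_B,\norm{\cdot}_D)
\]
is continuous by the first paragraph, and it is bijective. The open mapping theorem then yields continuity of its inverse, providing a constant $C'$ with $\norm{\psi}_{H^1}\leq C'\norm{\psi}_D$ for all $\psi\in\mathcal{D}_B$. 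Combined with the reverse estimate, this gives the desired equivalence of norms.

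The only step requiring any care is the closedness argument, and even there the key input (continuity of $R$ on $\dom(D_{\maxd})$ and closedness of $B$ in $\check{H}(A)$) is handed to us directly by Theorem~\ref{BBMain} and Definition~\ref{def:bound}; the rest is a routine application of the open mapping theorem.
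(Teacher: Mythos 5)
Your proposal is correct and follows essentially the same route as the paper: establish $\norm{\cdot}_D\leq C\norm{\cdot}_{H^1}$, show $\dom(D_{\maxd,B})$ is closed in $H^1(\Sigma,E)$, then apply the open mapping theorem to the identity map to get the reverse estimate. The only (harmless) difference is in the closedness step: the paper simply quotes that $(\dom(D_{\maxd,B}),\norm{\cdot}_D)$ is a Banach space, whereas you re-derive this fact by combining the continuity of the trace map $R$ on $\dom(D_{\maxd})$ from Theorem~\ref{BBMain} with the closedness of $B$ in $\check{H}(A)$ — a correct and slightly more explicit justification of the same point.
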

\begin{proof}
	We first show that $\operatorname{dom}(D_{\maxd,B})$ is closed in $H^1(\Sigma,E)$.
	Let
	\[\{\psi_n\}_{n\in\mathbb{N}}\subseteq \operatorname{dom}(D_{\maxd,B})\subseteq H^1(\Sigma,E)\subseteq \operatorname{dom}(D_{\maxd})\]
	such that $\psi_n\to\psi$ in $H^1(\Sigma,E)$.
	 Since $\norm{\cdot}_D\leq C \norm{\cdot}_{H^1}$ and $(\operatorname{dom}(D_{\maxd,B}),\norm{\cdot}_D)$ is a Banach space, we know that $\psi_n\to \psi\in\operatorname{dom}(D_{\maxd,B})$  and, thus, $\operatorname{dom}(D_{\maxd,B})$ is closed in $H^1(\Sigma,E)$.

	In particular, $(\operatorname{dom}(D_{\maxd,B}),\norm{\cdot}_D)$ and $(\operatorname{dom}(D_{\maxd,B}), \norm{\cdot}_{H^1})$ are both Banach spaces with $\norm{\cdot}_D\leq C \norm{\cdot}_{H^1}$.
	 Hence, the identity map  $\id\colon (\operatorname{dom}(D_{\maxd,B}),\norm{\cdot}_{H^1})\to (\operatorname{dom}(D_{\maxd,B}),\norm{\cdot}_D)$ is continuous and bijective.
	 By the open mapping theorem this is an isomorphism, which implies the second claim of the corollary.
\end{proof}

\begin{bem}
	Theorem~3.12 in \cite{BB2} gives a characterization of selfadjoint elliptic boundary conditions.
	Furthermore, this theorem shows that in our setting the existence of selfadjoint elliptic boundary conditions require the boundary operator $A$ to have even dimensional kernel.
	Later when we consider selfadjoint elliptic boundary conditions this will be indirectly required.
\end{bem}

For higher boundary regularity there is the notion of $\infty$-regular boundary conditions.
For the technical definition we refer the interested reader to \cite{BB1}.
We need to know that $\infty$-regular boundary conditions are in particular elliptic boundary conditions and the following higher boundary regularity holds:
\begin{satz}[Theorem~7.17 in \cite{BB1}] \label{regular}
	Let $B$ be an $\infty$-regular boundary condition.
	Then
	\[
	D_{B}\psi\in H^k(\Sigma,E)~~ \Leftrightarrow~~ \psi\in H^{k+1}(\Sigma,E).
	\]
	for all $k\in\mathbb{N}$ and $\psi\in \operatorname{dom}(D_{B})$.
\end{satz}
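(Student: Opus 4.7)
The direction ``$\Leftarrow$'' is immediate: $D$ is a first-order differential operator with smooth coefficients, hence maps $H^{k+1}(\Sigma,E)$ continuously into $H^k(\Sigma,E)$. The substantive content is the forward implication, which I would establish by induction on $k$. The base case $k=0$ is exactly Theorem~\ref{thm:ell}(1): an $\infty$-regular boundary condition is in particular elliptic, so $\operatorname{dom}(D_B)\subseteq H^1(\Sigma,E)$.

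For the inductive step, assume the statement for $k-1$ and take $\psi\in\operatorname{dom}(D_B)$ with $D_B\psi\in H^k(\Sigma,E)$. Then $D_B\psi\in H^{k-1}(\Sigma,E)$, so by the inductive hypothesis $\psi\in H^k(\Sigma,E)$; it remains to upgrade by one derivative. Interior regularity is standard: since $D$ is elliptic of first order and $D\psi\in H^k_{\mathrm{loc}}$ away from $\partial\Sigma$, the classical interior elliptic regularity theorem yields $\psi\in H^{k+1}_{\mathrm{loc}}(\Sigma\setminus\partial\Sigma,E)$. The real work is near $\partial\Sigma$.

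Fix a collar $[0,\varepsilon)_t\times\partial\Sigma$ in which $D$ takes the normal form $D=\sigma_D(\ngm)\bigl(\partial_t+A_t\bigr)$ for a smooth family $\{A_t\}$ of first-order elliptic operators on $\partial\Sigma$, and rewrite the equation $D\psi=f$ as the ODE
\begin{equation*}
\partial_t\psi=-A_t\psi+\sigma_D(\ngm)^{-1}f.
\end{equation*}
I would first prove tangential regularity: let $P$ be any smooth first-order differential operator in the $\partial\Sigma$-directions, apply $P$ to both sides, and use a commutator / difference-quotient argument together with the inductive hypothesis to deduce $P\psi\in H^k$. Iterating in all tangential directions shows that all mixed tangential derivatives of $\psi$ up to total order $k+1$ lie in $L^2$; the displayed ODE then recovers the missing normal regularity, yielding $\psi\in H^{k+1}$ in the collar. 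Combining with the interior bound gives $\psi\in H^{k+1}(\Sigma,E)$.

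The main obstacle is producing the tangential regularity up to the boundary, because differentiating tangentially does not automatically preserve the boundary condition, and without that preservation the inductive step could not be applied to $P\psi$. This is precisely what the definition of $\infty$-regularity is designed to ensure: the boundary space $B$ is compatible, in a quantitative sense on all Sobolev scales, with the action of the spectral projections $\chi^\pm(A)$ and with tangential differentiation. With this compatibility in hand, one approximates $\psi$ by smooth sections $\psi_n\in\operatorname{dom}(D_B)\cap C^\infty(\Sigma,E)$ on which an \emph{a priori} estimate
\begin{equation*}
\|\psi_n\|_{H^{k+1}}\leq C\bigl(\|D\psi_n\|_{H^k}+\|\psi_n\|_{L^2}\bigr)
\end{equation*}
is available by the collar argument above, and passes to the limit using Corollary~\ref{koro:closed} to control the right-hand side.
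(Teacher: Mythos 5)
The paper itself gives no proof of Theorem~\ref{regular}: it is quoted verbatim from Theorem~7.17 in \cite{BB1}, and even the definition of $\infty$-regularity is delegated to that reference. So your proposal has to be measured against the argument in \cite{BB1}, and there it diverges at exactly the point you identify as the main obstacle. Your ``$\Leftarrow$'' direction, the interior elliptic regularity, and the reduction to a collar where $D=\sigma_D(\ngm)(\partial_t+A_t)$ are all fine. The gap is the assertion that $\infty$-regularity means $B$ is compatible ``with tangential differentiation''. That is not what the definition says: in \cite{BB1} an elliptic boundary condition is written in graph form with respect to the spectral projections of $A$, roughly $B=W_+\oplus\{v+gv;\ v\in \chi^-(A)H^{\frac12}\cap V_-\}$ with $W_\pm$ finite dimensional and $g\colon V_-\to V_+$ bounded, and ($\infty$-)regularity is the requirement that $W_+$ consist of smooth sections and that $g$ restrict to bounded maps on the whole Sobolev scale. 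Nothing in this controls commutators with tangential differential operators, and for a genuinely non-local condition such as APS a tangential derivative of a section with $R\psi\in B$ will in general \emph{not} satisfy $R(P\psi)\in B$: $\chi^-(A)$ commutes with $A$ but not with an arbitrary tangential vector field. So the difference-quotient/commutator scheme collapses precisely where the boundary condition must be used, and your sketch does not repair it. The proof in \cite{BB1} avoids tangential difference quotients altogether: near the boundary one splits the solution by $\chi^\pm(A)$, uses the explicit solution (semigroup) formula for the model operator $\sigma(\partial_t+A)$ on the cylinder to obtain the regularity of the $\chi^+(A)$-part of the trace from $D\psi\in H^k$, transfers it to the $\chi^-(A)$-part through the graph map $g$ using exactly the Sobolev mapping property that defines regularity of $B$, and concludes via the characterization that, for $\psi\in\dom(D_{\maxd})$ with $D\psi\in H^k$, one has $\psi\in H^{k+1}$ if and only if $R\psi\in H^{k+\frac12}$.

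The closing step of your proposal is also circular as written. The a priori estimate $\norm{\psi_n}_{H^{k+1}}\leq C(\norm{D\psi_n}_{H^k}+\norm{\psi_n}_{L^2})$ on smooth sections satisfying the boundary condition is essentially equivalent to the theorem, and you justify it only ``by the collar argument above'', i.e.\ by the unproved tangential step. Moreover, to pass to the limit you would need density of $\dom(D_B)\cap C^\infty(\Sigma,E)$ in $\{\psi\in\dom(D_B);\ D\psi\in H^k\}$ with respect to the norm $\norm{\psi}_{H^k}+\norm{D\psi}_{H^k}$, preserving the (non-local) boundary condition; this density is not established and is of the same order of difficulty as the regularity statement itself.
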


\begin{ass}
From now on, let $B$ be an $\infty$-regular selfadjoint boundary condition.
\end{ass}
For $k\in\mathbb{N}$ the operator $D_B^k$ is again selfadjoint and we put
\begin{align}
H^k_{B}(\Sigma,E)
&:=
\operatorname{dom}(D_B^k)
=
\{\psi\in H^k(\Sigma,E);~R(D^l\psi)\in B~~\text{for all } 0\leq l\leq k-1\}.
\label{eq:DefHkB}
\end{align}
The scalar product
\[
\spann{\psi,\phi}_{B,k}:=\spann{\psi,\phi}_{L^2(\Sigma,E)} + \spann{D_B^k\psi,D_B^k\phi}_{L^2(\Sigma,E)}
\]
induces the graph norm of $D_B^k$.
Thus $H^k_{B}(\Sigma,E)$ is a Hilbert space.

\begin{lem}\label{Sobolev:lem}
	For any $k\in\mathbb{N}$ the following holds:
	\begin{enumerate}[label=(\arabic*), labelwidth=!, labelindent=2pt, leftmargin=21pt]
	\item\label{closeda}
	$H^l_B(\Sigma,E)\subseteq H^k_B(\Sigma,E)$ whenever $l>k$.
	\item\label{closedb}
	$H^\infty_B(\Sigma,E):=\bigcap_{l=1}^\infty H^l_{B}(\Sigma,E)$ is a dense subspace of $L^2(\Sigma,E)$.
	\item\label{closedc}
	$H^k_B(\Sigma,E)$ is closed in $H^k(\Sigma,E)$ and the norms $\norm{\cdot}_{B,k}$ and $\norm{\cdot}_{H^k}$ are equivalent on $H^k_B(\Sigma,E)$.
	\item\label{closedd}
	On $H^k_B(\Sigma,E)$, we have
		\begin{align} \label{estimate1}
			\norm{\cdot}_{k,B}^2\leq \norm{\cdot}^2_{B,k-1}+\norm{D\cdot}^2_{B,k-1}.
		\end{align}
	\end{enumerate}
\end{lem}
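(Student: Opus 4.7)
The plan is to handle the four assertions in the order (1), (4), (3), (2), using throughout the identification of $H^k_B(\Sigma,E)$ with the explicit description in~\eqref{eq:DefHkB} provided by the $\infty$-regularity Theorem~\ref{regular}.

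Assertion~(1) follows directly: if $\psi\in H^l_B(\Sigma,E)$ with $l>k$, then $\psi\in H^l(\Sigma,E)\subseteq H^k(\Sigma,E)$, and the trace conditions $R(D^j\psi)\in B$ for $0\leq j\leq l-1$ subsume those for $0\leq j\leq k-1$. Assertion~(4) reduces to a direct expansion: one has $\norm{\psi}_{B,k}^2=\norm{\psi}_{L^2}^2+\norm{D^k\psi}_{L^2}^2$ and
\[
\norm{\psi}_{B,k-1}^2+\norm{D\psi}_{B,k-1}^2=\norm{\psi}_{L^2}^2+\norm{D^{k-1}\psi}_{L^2}^2+\norm{D\psi}_{L^2}^2+\norm{D^k\psi}_{L^2}^2,
\]
so the inequality is immediate after discarding the two nonnegative middle terms.

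For (3), I would first prove that $H^k_B(\Sigma,E)$ is closed in $H^k(\Sigma,E)$. Given $\psi_n\in H^k_B(\Sigma,E)$ with $\psi_n\to\psi$ in $H^k$, continuity of $D^l\colon H^k(\Sigma,E)\to H^{k-l}(\Sigma,E)$ gives $D^l\psi_n\to D^l\psi$ in $H^{k-l}$ for $0\leq l\leq k-1$, and the continuous trace map then yields $R(D^l\psi_n)\to R(D^l\psi)$ in $H^{k-l-\frac12}(\partial\Sigma,E|_{\partial\Sigma})$, which embeds continuously into $H^{\frac12}$ and hence into $\check H(A)$. Closedness of $B$ in $\check H(A)$ forces the limit to lie in $B$, so $\psi\in H^k_B(\Sigma,E)$. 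The norm equivalence then follows exactly as in the proof of Corollary~\ref{koro:closed}: the estimate $\norm{\cdot}_{B,k}\leq C\norm{\cdot}_{H^k}$ is clear since $D^k$ is a differential operator of order $k$, and since both $(H^k_B(\Sigma,E),\norm{\cdot}_{B,k})$ (graph norm of the closed operator $D_B^k$) and $(H^k_B(\Sigma,E),\norm{\cdot}_{H^k})$ (closed subspace of a Banach space) are complete, the open mapping theorem promotes continuity of the identity to an isomorphism.

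For~(2) I would invoke the spectral theorem for the selfadjoint operator $D_B$. For every $n\in\mathbb{N}$ the image of the spectral projector $\chi_{[-n,n]}(D_B)$ lies in $\dom(D_B^k)$ for every $k$, hence in $H^\infty_B(\Sigma,E)$. Since $\chi_{[-n,n]}(D_B)\to\id$ strongly on $L^2(\Sigma,E)$ as $n\to\infty$, the union $\bigcup_n\chi_{[-n,n]}(D_B)L^2(\Sigma,E)$ is dense in $L^2(\Sigma,E)$, and therefore so is $H^\infty_B(\Sigma,E)$. The only delicate step in the whole lemma is the closedness argument in~(3): the traces $R(D^l\psi_n)$ a priori only lie in $B\subseteq\check H(A)$, and one has to transport the convergence in $H^{k-l-\frac12}$ into a topology in which $B$ is known to be closed, which is precisely what the continuous embedding $H^{\frac12}\hookrightarrow\check H(A)$ accomplishes.
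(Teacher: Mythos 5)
Your proof is correct, and for parts (1), (3) and (4) it is essentially the paper's argument: (1) and (4) are read off from the definition \eqref{eq:DefHkB} and the graph norm, and (3) combines boundedness of the trace maps $RD^l\colon H^k(\Sigma,E)\to H^{k-l-\frac12}(\partial\Sigma,E|_{\partial\Sigma})$, $l\le k-1$, with closedness of $B$ and the open mapping theorem. The only cosmetic difference in (3) is that you transport the convergence of the traces into $\check H(A)$, where $B$ is closed by Definition~\ref{def:bound}, whereas the paper uses that $B$ is closed in $H^{\frac12}(\partial\Sigma,E|_{\partial\Sigma})$; these are equivalent here, since $B\subseteq H^{\frac12}$ and the inclusion $H^{\frac12}\hookrightarrow\check H(A)$ is continuous, so closedness in $\check H(A)$ yields closedness in $H^{\frac12}$. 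Where you genuinely diverge is (2): the paper simply observes that $H^\infty_B(\Sigma,E)\supseteq C^\infty_{\cc}(\Sigma,E)$, which is already dense in $L^2(\Sigma,E)$, while you invoke the spectral theorem for the selfadjoint operator $D_B$ and use that the ranges of the spectral projections $\chi_{[-n,n]}(D_B)$ lie in $\bigcap_k\dom(D_B^k)=H^\infty_B(\Sigma,E)$ and exhaust $L^2$ in the strong limit. Your argument is valid (note it uses $D_B=D_{\maxd,B}$ selfadjoint, which holds since $B$ is a selfadjoint elliptic boundary condition) and has the side benefit of fitting the functional-calculus viewpoint used later for the mollifiers $\JBeps$, but it is heavier than needed; the compactly supported smooth sections give density for free and require no boundary condition at all.
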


\begin{proof}
Statements \ref{closeda} and \ref{closedd} are clear from the definitions and \ref{closedb} holds because $H^\infty_B(\Sigma,E)$ contains $C^\infty_{\cc}(\Sigma,E)$.
As to \ref{closedc}, recall that $B$ is closed in $H^{\frac 12}(\partial\Sigma,E|_{\partial\Sigma})$ and observe that
\[
R D^l \colon H^k(\Sigma,E) \to H^{\frac 12}(\partial\Sigma,E|_{\partial\Sigma})
\]
is bounded for $l\leq k-1$.
Thus $H^k_B(\Sigma,E)$ is closed in $H^k(\Sigma,E)$.
Since $D_B^k$ is differential operator of order $k$, we have $\norm{\cdot}_{B,k}\leq C \norm{\cdot}_{H^k}$.
The open mapping theorem applied to the identity mapping on $H^k_B(\Sigma,E)$ implies $\norm{\cdot}_{B,k}\simeq \norm{\cdot}_{H^k}$.
\end{proof}

\begin{bem}
	Note that \ref{closedc} and \ref{closedd} in Lemma~\ref{Sobolev:lem} imply the estimate
	\begin{align}\label{estimate2}
		\norm{\cdot}_{H^k}^2\leq C \left( \norm{\cdot}^2_{H^{k-1}}+\norm{D\cdot}^2_{H^{k-1}}\right)
	\end{align}
	on $H^k_B(\Sigma,E)\subseteq H^{k-1}_B(\Sigma,E)$.
	Moreover, \ref{closedc} in Lemma~\ref{Sobolev:lem} implies that any differential operator $Q:C^\infty(\Sigma,E)\to C^\infty(\Sigma,E)$ of order $l$ extends to a bounded operator
	\[
	Q:H^{k+l}_B(\Sigma,E)\to H^k(\Sigma,E).
	\]
\end{bem}

In Section~\ref{sec:exist} we will need mollifiers to regularize our Cauchy problems.
It will be convient to consider, for $\varepsilon>0$, the operator
\[
\JBeps := \exp\big(-\varepsilon(\id+D_B^2)\big)
\]
defined by functional calculus for selfadjoint operators.
Since the function $x\mapsto e^{-\varepsilon(1+x^2)}$ takes values between $0$ and $e^{-\varepsilon}$, the operator $\JBeps$ is bounded on $L^2(\Sigma,E)$ with operator norm smaller than $1$.

Similarly, the operators $D_B^k \JBeps$ are bounded so that we have a bounded operator $\JBeps:L^2(\Sigma,E)\to H^k_B(\Sigma,E)$ for every $\varepsilon>0$ and $k\in\mathbb{N}_0$.
In particular, $\JBeps\psi\in H^\infty_B(\Sigma,E)$ for each $\psi\in L^2(\Sigma,E)$.
Thus $\JBeps$ is a smoothing operator.

Since $\JBeps$ is a function of $D_B$, these two operators commute.
This and $\norm{\JBeps}_{L^2,L^2}\le e^{-\varepsilon}$ implies
\begin{equation*}
\norm{\JBeps\psi}_{k,B}^2\leq e^{-\varepsilon}\norm{\psi}_{k,B}^2
\end{equation*}
for each $\psi\in H^k_B(\Sigma,E)$.
Thus $\JBeps:H^k_B(\Sigma,E)\to H^k_B(\Sigma,E)$ is a contraction.

Since the family of functions $x\mapsto e^{-\varepsilon(1+x^2)}$ is uniformly bounded and converges pointwise to $1$, the family of operators $\JBeps$ converges strongly to $\id_{L^2(\Sigma,E)}$ as $\varepsilon\searrow 0$.
For $\psi\in H^k_B(\Sigma,E)$ we have $\JBeps\psi\to\psi$ in $L^2(\Sigma,E)$ and $D_B^k\JBeps\psi = \JBeps D_B^k\psi \to D_B^k\psi$ in $L^2(\Sigma,E)$.
Thus $\JBeps\psi\to\psi$ in $H^k_B(\Sigma,E)$, i.e.\ the family of operators $\JBeps$ converges strongly to $\id_{H^k_{B}(\Sigma,E)}$ in the space of bounded operators on $H^k_{B}(\Sigma,E)$.

\medskip
Showing that a boundary condition is elliptic or $\infty$-regular can be difficult but for some classes of boundary conditions one has convenient criteria at hand.
For later use in the Lorentzian setting, the following classes of boundary conditions will be quite accessible:

\begin{defi}[(pseudo-) local boundary condition]\leavevmode
	\begin{enumerate}[label=(\arabic*), labelwidth=!, labelindent=2pt, leftmargin=21pt]
		\item We say that a linear subspace $B\subseteq H^{\frac{1}{2}}(\partial\Sigma,E|_{\partial\Sigma})$ is a \emph{local boundary condition} if there is a subbundle $E'\subseteq E|_{\partial\Sigma}$ such that $B=H^{\frac 12}(\partial\Sigma,E')$.
		\item We say that a linear subspace $B\subseteq H^{\frac 12}(\partial\Sigma,E|_{\partial\Sigma})$ is a \emph{pseudolocal boundary condition} if there is a classical pseudodifferential operator $P$ of order zero, acting on sections of $E$ over $\partial\Sigma$, which induces a projection on $L^2(\partial \Sigma,E|_{\partial\Sigma})$ such that $B=P(H^{\frac 12}(\partial \Sigma,E|_{\partial\Sigma}))$.
	\end{enumerate}
\end{defi}
Since $P$ is a pseudodifferential operator of order zero, it yields a bounded map $H^s(\partial\Sigma,E|_{\partial\Sigma})\to H^s(\partial\Sigma,E|_{\partial\Sigma})$ for every $s\in\mathbb{R}$.
Note that such a $B$ need not be a boundary condition as in Definition~\ref{def:bound} since it may not be closed in the check space.
Here the following will be convenient:
\begin{satz}[Theorem~7.20, Corollary~7.23 \& Proposition~7.24 \cite{BB1}] \label{critEll} \leavevmode
	\begin{enumerate}[label=(\arabic*), labelwidth=!, labelindent=2pt, leftmargin=21pt]
		\item 
		The following are equivalent:
		\begin{enumerate}
			\item $B=P(H^{\frac{1}{2}}(\partial\Sigma,E|_{\partial\Sigma}))$ is closed in $\check{H}(A)$ and elliptic,
			\item $P-\chi^+(A)\colon L^2(\partial\Sigma,E|_{\partial\Sigma})\to L^2(\partial\Sigma,E|_{\partial\Sigma})$ is an elliptic operator, and
			\item $P-\chi^+(A)\colon L^2(\partial\Sigma,E|_{\partial\Sigma})\to L^2(\partial\Sigma,E|_{\partial\Sigma})$ is a Fredholm operator.
		\end{enumerate}
		\item\label{critEll2} 
		Let $E|_{\partial \Sigma}:=E'\oplus E''$ be a decomposition such that the principal symbol $\sigma_A(\zeta)$ of the boundary operator interchanges $E'$ and $E''$ for every $\zeta\in T^\ast\partial \Sigma$.

		Then $B':=H^{\frac 12}(\partial \Sigma,E')$ and $B'':=H^{\frac 12}(\partial \Sigma,E'')$ are closed in $\check{H}(A)$ and elliptic.
		\item 
		Every pseudolocal elliptic boundary condition is $\infty$-regular.
	\end{enumerate}
\end{satz}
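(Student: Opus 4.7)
The plan is to treat each of the three parts separately, using Theorem~\ref{thm:ell} as the bridge between abstract ellipticity and concrete Sobolev regularity.

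For part~(1), the equivalence (b)$\Leftrightarrow$(c) is standard on the closed manifold $\partial\Sigma$: an order-zero classical pseudodifferential operator on a closed manifold with pointwise invertible principal symbol is Fredholm on $L^2$, and conversely an order-zero PDO that is Fredholm must have invertible principal symbol. The substantive content is therefore (a)$\Leftrightarrow$(b). I would block-decompose $P$ relative to the spectral splitting $L^2 = \chi^+(A)L^2 \oplus \chi^-(A)L^2$, so that $P - \chi^+(A)$ takes a block form with diagonal entries $P_{++} - \id$ and $P_{--}$. Pointwise invertibility of its principal symbol is then equivalent to saying that, to leading order, the range of $P$ is a graph over $\chi^-(A)L^2$. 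Combined with Theorem~\ref{BBMain}, this graph description of $B=P(H^{\frac 12})$ is precisely what ensures $B$ is closed in $\check{H}(A)$ and that both $B$ and $B^*$ are contained in $H^{\frac 12}$; conversely, ellipticity of $B$ forces such a graph representation modulo finite rank, giving the Fredholm property.

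For part~(2), a direct symbol computation suffices. With respect to the splitting $E|_{\partial\Sigma} = E'\oplus E''$, the interchange hypothesis forces
\[
\sigma_A(\zeta) = \begin{pmatrix} 0 & M(\zeta)^* \\ M(\zeta) & 0 \end{pmatrix},
\]
with $M(\zeta)$ a linear isomorphism for $\zeta\neq 0$ (ellipticity of $A$ inherited from that of $D$). Since $|\sigma_A|$ commutes with $\sigma_A$, the partial isometry $U(\zeta):=|\sigma_A(\zeta)|^{-1}\sigma_A(\zeta)$ satisfies $U^2=\id$ and has only off-diagonal blocks, giving
\[
\chi^+(\sigma_A(\zeta)) = \tfrac 12 \id + \tfrac 12 U(\zeta),
\]
whose diagonal blocks equal $\tfrac 12 \id$. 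With $P = \begin{pmatrix} \id_{E'} & 0 \\ 0 & 0\end{pmatrix}$ projecting onto $E'$, a short matrix computation yields $(P - \chi^+(\sigma_A(\zeta)))^2 = \tfrac 12 \id$ for $\zeta\neq 0$, so the symbol is invertible. Part~(1) then gives that $B'$ is elliptic, and the same argument with $E'$ and $E''$ swapped handles $B''$.

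For part~(3), I would prove $\infty$-regularity by induction on the order. The base case is exactly the elliptic regularity statement in Theorem~\ref{thm:ell}. For the inductive step, assume $\psi\in\dom(D_B)$ satisfies $D\psi\in H^k$ with $\psi\in H^k$ already known. Interior elliptic regularity for $D$ upgrades $\psi$ to $H^{k+1}$ away from $\partial\Sigma$. Near the boundary one works in a collar, writes $D$ in the normal form $D = \sigma_D(\ngm)(\partial_t + A_t + R_t)$, and differentiates tangentially; since $P$ is a PDO of order zero, the compatibility condition $(\id - P)RD^l\psi = 0$ behaves well under tangential differentiation, with commutators $[P, A_t]$ of order zero harmless to the regularity count. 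Higher normal regularity then follows by using the equation $D\psi\in H^k$ to trade normal derivatives for tangential ones. The main obstacle, both here and in part~(1), is managing these pseudodifferential commutators cleanly; this is where pseudolocality (as opposed to strict locality) makes the argument genuinely nontrivial.
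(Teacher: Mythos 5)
The paper itself does not prove Theorem~\ref{critEll}; it is quoted from \cite{BB1} (Theorem~7.20, Corollary~7.23, Proposition~7.24), so your proposal has to be measured against the arguments there. Your part~(2) is essentially correct and is the standard route: since $\sigma_A(\zeta)$ is off-diagonal with respect to $E'\oplus E''$ and invertible for $\zeta\neq 0$, the symbol of $\chi^+(A)$ is $\tfrac12(\id+U(\zeta))$ with $U(\zeta)$ an off-diagonal involution, and $(p-\tfrac12(\id+U))^2=\tfrac12\id$ for the projection $p$ onto $E'$; feeding this into part~(1) is legitimate. The equivalence (b)$\Leftrightarrow$(c) for classical zero-order pseudodifferential operators on the closed manifold $\partial\Sigma$ is indeed standard (parametrix in one direction, symbol detected in the Calkin algebra in the other), so quoting it is acceptable.

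The genuine gaps are in parts~(1) and~(3). In part~(1) the only substantive implication, (a)$\Leftrightarrow$(b)/(c), is asserted rather than proved: the sentence that invertibility of the symbol of $P-\chi^+(A)$ means ``to leading order the range of $P$ is a graph over $\chi^-(A)L^2$'' is exactly the content of Theorem~7.20 in \cite{BB1}, and passing from Fredholmness of $P-\chi^+(A)$ on $L^2$ to closedness of $B=P(H^{\frac12})$ in $\check H(A)$ together with $B^\ast\subseteq H^{\frac12}$ requires the structural characterization of elliptic boundary conditions (Theorem~7.11 in \cite{BB1}: $B$ is, up to finite-dimensional pieces in $H^{\frac12}$, the graph of an operator $g\colon \chi^-(A)H^{\frac12}\to\chi^+(A)H^{\frac12}$ with $g$ and $g^\ast$ bounded on $H^{\frac12}$) and an actual construction of $g$ from $P$ and the spectral projections, as well as the converse; your sketch restates the target instead of carrying out either direction. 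In part~(3), the specific claim that $[P,A_t]$ has order zero is false in general: these are systems, the principal symbols of $P$ and $A_t$ need not commute (they do for APS, not for a general pseudolocal condition), so the commutator is generically of order one, and controlling precisely this term is where the ellipticity of $B$ (invertibility of $p-\chi^+(\sigma_A)$), not merely the order of $P$, must enter. Moreover, $\infty$-regularity is defined in \cite{BB1} via mapping properties of the boundary condition on the scale $H^{k+\frac12}$, and Proposition~7.24 there verifies that definition by noting that the operators entering the graph decomposition are zero-order pseudodifferential operators and hence bounded on every Sobolev space; your alternative plan of proving the conclusion of Theorem~\ref{regular} directly by collar estimates would additionally require a model-operator estimate near the boundary for a nonlocal projection, which is exactly the difficulty your last sentence defers. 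As it stands, parts~(1) and~(3) are outlines of what must be proved rather than proofs.
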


We can use this analysis a special class of pseudolocal boundary conditions, the Grassmannian projections, defined as follows: 

\begin{defi} \label{Grassmannian:Def}
A classical pseudodifferential projection $P:L^2(\partial\Sigma,E|_{\partial\Sigma})\to L^2(\partial\Sigma,E|_{\partial\Sigma})$ is called a \emph{Grassmannian projection} if
\begin{enumerate}[label=(\alph*), labelwidth=!, labelindent=2pt, leftmargin=21pt]
	\item\label{Grassmannian1} $P^\ast=P$,
	\item\label{Grassmannian2} $P=\id+\sigma_D(\ngm)P\sigma_D(\ngm)$, and
	\item\label{Grassmannian3} $P-\chi^+(A):L^2(\partial\Sigma,E|_{\partial\Sigma})\to L^2(\partial\Sigma,E|_{\partial\Sigma})$ is a Fredholm operator.
\end{enumerate}
\end{defi}
First, let us analyze the first two properties of Definition~\ref{Grassmannian:Def}.
\begin{koro}\label{Grassmannian:cor1}
Let $P:L^2(\partial\Sigma,E|_{\partial\Sigma})\to L^2(\partial\Sigma,E|_{\partial\Sigma})$ be a classical pseudodifferential projection such that it satisfies Property~\ref{Grassmannian1} and Property~\ref{Grassmannian2} of Definition~\ref{Grassmannian:Def}.  Then $B:=P(H^{\frac 12}(\partial\Sigma,E|_{\partial\Sigma}))$ is a selfadjoint boundary condition for $D$.
\end{koro}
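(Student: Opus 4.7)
The plan is to distill the algebraic content of Properties~(a) and (b) into one identity, use it together with the trace pairing of Theorem~\ref{BBMain}(3) to prove $B^\ast=B$, and finally address the closedness of $B$ inside $\check{H}(A)$.

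\emph{Key identity.} Since $\ngm$ is the unit conormal, the Clifford relation forces $\sigma_D(\ngm)^2=-\id$, so $\sigma_D(\ngm)^{-1}=-\sigma_D(\ngm)$. Multiplying Property~(b), $P=\id+\sigma_D(\ngm)P\sigma_D(\ngm)$, on the left by $\sigma_D(\ngm)$ produces the relation
\[
\sigma_D(\ngm)P+P\sigma_D(\ngm)=\sigma_D(\ngm).
\]
Composing this with $P$ on the right and using $P^2=P$ yields
\[
P\sigma_D(\ngm)P=0
\]
on $L^2(\partial\Sigma,E|_{\partial\Sigma})$ and, by continuity of $P$ as a pseudodifferential operator of order~$0$, on every Sobolev extension.

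\emph{Selfadjointness.} For $\psi,\phi\in B$ one has $\psi=P\psi$ and $\phi=P\phi$, so moving the two $P$-factors across the pairing via $P^\ast=P$ (Property~(a)) gives
\[
\int_{\partial\Sigma}h_E(\sigma_D(\ngm)\psi,\phi)\,d\mu=\int_{\partial\Sigma}h_E(P\sigma_D(\ngm)P\psi,\phi)\,d\mu=0,
\]
hence $B\subseteq B^\ast$. Conversely, let $\phi\in B^\ast$. Substituting $\psi=P\alpha$ with arbitrary $\alpha\in H^{\frac12}$ into the definition of $B^\ast$ and moving $P$ across the pairing shows that the distribution $P\sigma_D(\ngm)\phi\in H^{-\frac12}$ pairs trivially with every $\alpha\in H^{\frac12}$; by nondegeneracy of the $H^{\frac12}$--$H^{-\frac12}$ duality this forces $P\sigma_D(\ngm)\phi=0$. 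Plugging this back into Property~(b) applied to $\phi$ gives $P\phi=\phi+\sigma_D(\ngm)P\sigma_D(\ngm)\phi=\phi$.

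\emph{Boundary condition.} Because $P$ is a classical pseudodifferential projection of order zero, $B=\ker(\id-P)\cap H^{\frac12}$ is closed in $H^{\frac12}$. To upgrade to closedness in $\check{H}(A)$ (and simultaneously conclude from $P\phi=\phi$ in the previous step that $\phi$ actually lies in $H^{\frac12}$, not merely in $H^{-\frac12}$), I would combine the key identity with the anticommutation $\sigma_D(\ngm)A=-A\sigma_D(\ngm)$ to show that the off-diagonal blocks $\chi^\mp(A)P\chi^\pm(A)$ gain one Sobolev order; this would produce a bounded projection $P\colon\check{H}(A)\to\check{H}(A)$ whose range is precisely $B$, so that $B$ is closed in $\check{H}(A)$ and $\phi=P\phi\in B$ as required.

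\emph{Main obstacle.} The algebra of the key identity is immediate and forces the pairing in the selfadjointness statement to vanish; the adjoint inclusion is then essentially formal. The genuinely delicate point is the last step: boosting $\phi$ from $H^{-\frac12}$ to $H^{\frac12}$ regularity, or equivalently showing that $P(H^{\frac12})$ is already closed in the weaker norm of $\check{H}(A)$, without invoking Property~(c) of Definition~\ref{Grassmannian:Def}. The two Sobolev orders required have to be extracted from the interplay of (a), (b) and the Clifford relation alone.
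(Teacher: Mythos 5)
Your algebraic core is correct and is, in substance, the same computation as the paper's: the paper uses \ref{Grassmannian1}, \ref{Grassmannian2} and $\sigma_D(\ngm)^2=-\id$ to establish
\[
\int_{\partial\Sigma}h_E(\sigma_D(\ngm)P\psi,\phi)\,\dt{\mu}_{\partial\Sigma}
=\int_{\partial\Sigma}h_E(\sigma_D(\ngm)\psi,(\id-P)\phi)\,\dt{\mu}_{\partial\Sigma},
\]
which is exactly your identity $P\sigma_D(\ngm)P=0$ (equivalently $\sigma_D(\ngm)P+P\sigma_D(\ngm)=\sigma_D(\ngm)$) in integrated form, and from it reads off both $B\subseteq B^\ast$ and the implication ``$\phi$ annihilates $\sigma_D(\ngm)B$ $\Rightarrow$ $(\id-P)\phi=0$''. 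So your ``key identity'' and ``selfadjointness'' steps reproduce the paper's proof.

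The step you flag as the main obstacle is a genuine issue, but two remarks are in order. First, the paper does not carry it out either: in its displayed computation the elements of $B^\ast$ are taken from $H^{\frac 12}(\partial\Sigma,E|_{\partial\Sigma})$ from the outset, so what is actually verified is that the Green pairing vanishes on $B\times B$ and that $B$ is maximal with this property inside $H^{\frac 12}$; closedness of $B$ in $\check{H}(A)$ and the regularity $B^\ast\subseteq H^{\frac 12}$ are only supplied in Corollary~\ref{Grassmannian:cor2}, where Property~\ref{Grassmannian3} enters through Theorem~\ref{critEll} (and for the later application in Lemma~\ref{selfadjoint} only the inclusion $B\subseteq B^\ast$ is used). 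Second, the route you sketch for closing the gap cannot succeed, because the desired conclusions are false under \ref{Grassmannian1}--\ref{Grassmannian2} alone: take $P=\chi^+(A)$ with $\ker A=\{0\}$. This is a classical pseudodifferential projection, \ref{Grassmannian1} is clear, and \ref{Grassmannian2} holds because conjugation by $\sigma_D(\ngm)$ carries $\chi^+(A)$ to $\id-\chi^+(A)$ (as $A$ anticommutes with $\sigma_D(\ngm)$). Here the off-diagonal blocks $\chi^\mp(A)P\chi^\pm(A)$ vanish, so there is no lack of Sobolev gain to blame, yet $B=\chi^+(A)H^{\frac 12}$ is dense in, and not closed in, the summand $\chi^+(A)H^{-\frac 12}$ of $\check{H}(A)$, one has $B^\ast=\chi^+(A)H^{-\frac 12}\neq B$, and the range of $P$ acting on $\check{H}(A)$ is $\chi^+(A)H^{-\frac 12}$, not $B$. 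Hence the missing half-order of regularity must come from Property~\ref{Grassmannian3} (Fredholmness of $P-\chi^+(A)$), not from \ref{Grassmannian1} and \ref{Grassmannian2} alone; once \ref{Grassmannian3} is invoked, Theorem~\ref{critEll} gives closedness in $\check{H}(A)$ and $B^\ast\subseteq H^{\frac 12}$, and your argument (like the paper's) then closes without further work.
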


\begin{proof}
	Using Assumptions~\ref{Grassmannian1} and \ref{Grassmannian2} we compute
	\begin{align*}
		\int_{\partial\Sigma}h_E(\sigma_D(\ngm)P\psi,\phi)\dt{\mu}_{\partial\Sigma}
		&=
		-\int_{\partial\Sigma}h_E(\sigma_D(\ngm)P\sigma_D(\ngm)^{2}\psi,\phi)\dt{\mu}_{\partial\Sigma} \\
		&=
		-\int_{\partial\Sigma}h_E((P-\id)\sigma_D(\ngm)\psi,\phi)\dt{\mu}_{\partial\Sigma} \\
		&=
		\int_{\partial\Sigma}h_E(\sigma_D(\ngm)\psi,(\id-P)\phi)\dt{\mu}_{\partial\Sigma} .
	\end{align*}
	This implies
	\begin{align*}
		B^\ast
		&=
		\Big\{\phi\in H^\frac{1}{2}(\partial\Sigma,E|_{\partial\Sigma});~\int_{\partial\Sigma}h_E(\sigma_D(\ngm)P\psi,\phi)\dt{\mu}_{\partial\Sigma}=0~\forall\,\psi\in H^{\frac 12}(\partial\Sigma,E|_{\partial\Sigma})\Big\} \\
		&=
		\Big\{\phi\in H^\frac{1}{2}(\partial\Sigma,E|_{\partial\Sigma});~\int_{\partial\Sigma}h_E(\sigma_D(\ngm)\psi,(\id-P)\phi)\dt{\mu}_{\partial\Sigma}=0~\forall\,\psi\in H^{\frac 12}(\partial\Sigma,E|_{\partial\Sigma})\Big\} \\
		&=
		P(H^\frac 12(\partial\Sigma,E|_{\partial\Sigma}))\\
		&= B .\qedhere
	\end{align*}
\end{proof}

\medskip
A direct consequence of Theorem~\ref{critEll} and Corollary~\ref{Grassmannian:cor1}  is the following:

\begin{koro}\label{Grassmannian:cor2}
	 Let $P:L^2(\partial\Sigma,E|_{\partial\Sigma})\to L^2(\partial\Sigma,E|_{\partial\Sigma})$ be a Grassmannian projection. Then $B:=P(H^{\frac 12}(\partial\Sigma,E|_{\partial\Sigma}))$ is a selfadjoint boundary condition for $D$ is a $\infty$-regular selfadjoint  boundary condition for $D$.
\end{koro}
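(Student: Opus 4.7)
The plan is to assemble the conclusion by invoking Corollary~\ref{Grassmannian:cor1} together with Theorem~\ref{critEll}, since a Grassmannian projection is precisely engineered so that each of its three defining properties matches one hypothesis of these previous results.

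First, I would dispose of selfadjointness. Properties \ref{Grassmannian1} and \ref{Grassmannian2} of Definition~\ref{Grassmannian:Def} are exactly the hypotheses of Corollary~\ref{Grassmannian:cor1}, so $B = P(H^{\frac{1}{2}}(\partial\Sigma,E|_{\partial\Sigma}))$ is automatically a boundary condition satisfying $B^\ast = B$. This step requires no further work.

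Next, I would establish ellipticity. By construction $P$ is a classical pseudodifferential projection of order zero on $L^2(\partial\Sigma,E|_{\partial\Sigma})$, so $B$ is a pseudolocal boundary condition in the sense of the preceding definition. Theorem~\ref{critEll}(1) then reduces ellipticity to the Fredholm property of $P - \chi^+(A)$ as an operator on $L^2(\partial\Sigma,E|_{\partial\Sigma})$, and this is exactly Property~\ref{Grassmannian3} of Definition~\ref{Grassmannian:Def}. Thus $B$ is a pseudolocal elliptic boundary condition.

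Finally, I would upgrade ellipticity to $\infty$-regularity using Theorem~\ref{critEll}(3), which asserts that every pseudolocal elliptic boundary condition is $\infty$-regular. Combining this with the selfadjointness from the first step yields the corollary. There is no genuine obstacle here; the content of the proof is bookkeeping, matching the three items of Definition~\ref{Grassmannian:Def} to the three external inputs (Corollary~\ref{Grassmannian:cor1} for selfadjointness, Theorem~\ref{critEll}(1) for ellipticity, and Theorem~\ref{critEll}(3) for the regularity upgrade). If anything were to go wrong it would be the closedness of $B$ in $\check{H}(A)$, but this is already part of the equivalence in Theorem~\ref{critEll}(1) and is therefore not an independent point.
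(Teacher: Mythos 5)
Your argument is correct and is exactly the route the paper takes: the paper states the corollary as a direct consequence of Theorem~\ref{critEll} and Corollary~\ref{Grassmannian:cor1}, and you have simply spelled out the bookkeeping (property~(c) giving ellipticity via Theorem~\ref{critEll}(1), then $\infty$-regularity via Theorem~\ref{critEll}(3), selfadjointness from Corollary~\ref{Grassmannian:cor1}).
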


\medskip
For a pseudodifferential operator $P$ of order $0$ acting on sections of $E|_{\partial\Sigma}\to\partial\Sigma$ and a fixed $k\in\mathbb{N}$ we consider the operator
\begin{gather*}
	\tilde{P}\colon H^k(\Sigma,E)\to \bigoplus_{l=0}^{k-1}(1-P)H^{k-l-\frac 12}(\partial\Sigma,E|_{\partial \Sigma}), \\
	\tilde{P}(\psi) = ((1-P)R\psi,(1-P)R(D\psi),\dots,(1-P)R(D^{k-1}\psi)).
\end{gather*}

\begin{koro}\label{koro:estimate4}
	Let $P\colon H^{\frac{1}{2}}(\partial\Sigma,E|_{\partial\Sigma})\to H^{\frac 12}(\partial\Sigma,E|_{\partial\Sigma})$ be a selfadjoint pseudo\-differential projection such that $B=P(H^{\frac 12}(\partial\Sigma,E|_{\partial\Sigma}))$ is an elliptic boundary condition.
	Let $k\in\mathbb{N}$.
	Then the operator
	\[
	D\oplus \tilde{P}\colon H^k(\Sigma,E)\to H^{k-1}(\Sigma,E)\oplus \bigoplus_{l=0}^{k-1}(1-P)H^{k-l-\frac 12}(\partial\Sigma,E|_{\partial \Sigma}),
	\]
	has finite dimensional kernel and closed image.
	In particular, there exists a constant $C>0$ such that
	\begin{equation}
	\norm{\psi}_{H^k}^2\leq C\Big(\norm{\psi}^2_{H^{k-1}}+\norm{D\psi}^{2}_{H^{k-1}}+\sum_{l=0}^{k-1}\norm{(1-P)R(D^l\psi)}^2_{H^{k-l-\frac 12}}\Big)
	\label{eq:DPest}
	\end{equation}
	for all $\psi\in H^k(\Sigma,E)$.
\end{koro}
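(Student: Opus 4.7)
The plan is to establish the a priori estimate~\eqref{eq:DPest}; once it is in hand, the finite-dimen\-sional kernel and closed image of $D \oplus \tilde P$ follow at once from a standard semi-Fredholm argument, using that $H^k(\Sigma,E) \hookrightarrow H^{k-1}(\Sigma,E)$ is compact by Rellich. As a pseudolocal elliptic boundary condition, $B = P\bigl(H^{\frac 12}(\partial\Sigma,E|_{\partial\Sigma})\bigr)$ is $\infty$-regular by Theorem~\ref{critEll}(3), so Lemma~\ref{Sobolev:lem}---in particular the estimate~\eqref{estimate2} valid on $H^k_B(\Sigma,E)$---is at our disposal.

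The core step is to reduce the estimate for arbitrary $\psi \in H^k(\Sigma,E)$ to the already-known $H^k_B$-estimate by subtracting off a controlled extension of the boundary data. I would construct a bounded linear lift
\[
E \colon \bigoplus_{l=0}^{k-1}(1-P)H^{k-l-\frac 12}(\partial\Sigma,E|_{\partial\Sigma}) \to H^k(\Sigma,E)
\]
with the property that $(1-P)R\bigl(D^l (E\vec g)\bigr) = g_l$ for every $l=0,\dots,k-1$ and $\vec g = (g_0,\dots,g_{k-1})$. Working in a collar neighborhood of $\partial\Sigma$, $D$ takes the form $\sigma_D(\ngm)\partial_t + Z_t$ with $Z_t$ purely tangential, so that
\[
R(D^l\eta) = \sigma_D(\ngm)^l (\partial_t^l\eta)|_{\partial\Sigma} + \sum_{j<l}Q_{l,j}\bigl((\partial_t^j\eta)|_{\partial\Sigma}\bigr)
\]
with tangential differential operators $Q_{l,j}$ of order $l-j$. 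Since $\sigma_D(\ngm)$ is a bundle isomorphism, I would solve this triangular system recursively for the normal jets $f_l := (\partial_t^l\eta)|_{\partial\Sigma} \in H^{k-l-\frac 12}(\partial\Sigma,E|_{\partial\Sigma})$ and then lift $(f_0,\dots,f_{k-1})$ to $\eta = E\vec g \in H^k(\Sigma,E)$ via the usual right-inverse of the boundary-jet map, yielding $\norm{\eta}_{H^k} \le C\sum_l \norm{g_l}_{H^{k-l-\frac 12}}$.

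With $E$ at hand, I set $g_l := (1-P)R(D^l\psi)$ and $\eta := E\vec g$, so that $\tilde\psi := \psi - \eta$ satisfies $R(D^l\tilde\psi) \in B$ for all $l=0,\dots,k-1$, i.e.\ $\tilde\psi \in H^k_B(\Sigma,E)$. Applying~\eqref{estimate2} to $\tilde\psi$ and combining with the triangle inequality and the bound on $\norm{\eta}_{H^k}$ then yields \eqref{eq:DPest}. The main obstacle is the clean recursive bookkeeping in the construction of $E$: one has to verify that the correction terms $(1-P)Q_{l,j}(f_j)$ land in the correct subspaces $(1-P)H^{k-l-\frac 12}(\partial\Sigma,E|_{\partial\Sigma})$---which they do because $(1-P)$ is a projection and the $Q_{l,j}$ have the correct order---and that the boundary-jet lift is bounded into $H^k(\Sigma,E)$. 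Once~\eqref{eq:DPest} is established, the compact embedding $H^k \hookrightarrow H^{k-1}$ gives the finite-dimensional kernel and closed image of $D\oplus\tilde P$ by the standard Peetre-type argument.
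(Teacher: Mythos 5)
Your proposal is correct, but it runs in the opposite direction to the paper's argument and uses a different key ingredient. You establish the estimate \eqref{eq:DPest} first, by explicitly constructing a bounded lift of the boundary data---solving the triangular jet system in a collar and invoking the standard right inverse of the boundary-jet map---so that $\psi-E\vec g\in H^k_B(\Sigma,E)$ and \eqref{estimate2} applies; finite-dimensional kernel and closed image of $D\oplus\tilde P$ then follow from the compact embedding $H^k\hookrightarrow H^{k-1}$ by Peetre's lemma. The paper argues the other way around: from \eqref{estimate2} and Rellich it first deduces, via Proposition~A.3 of \cite{BB1}, that $D$ restricted to $\ker\tilde P=H^k_B(\Sigma,E)$ has finite-dimensional kernel and closed image, then passes to $D\oplus\tilde P$ by the abstract Proposition~A.1 of \cite{BB1}, and only afterwards reads off \eqref{eq:DPest} from Proposition~A.3(iv). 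Your route is more self-contained and constructive, and as a by-product it shows that $\tilde P$ maps $H^k(\Sigma,E)$ onto $\bigoplus_{l=0}^{k-1}(1-P)H^{k-l-\frac12}(\partial\Sigma,E|_{\partial\Sigma})$, which is exactly the kind of surjectivity that underlies the paper's appeal to the abstract splitting result; the price is the collar/jet bookkeeping, which the paper avoids by citing its appendix references. One small point of care in your write-up: solve the triangular system for the \emph{full} traces, i.e.\ impose $R(D^l\eta)=g_l$; since each $g_l$ already lies in the range of the projection $1-P$, the required identity $(1-P)R(D^l\eta)=g_l$ is then automatic, whereas first projecting the system by $1-P$ (as your remark about the terms $(1-P)Q_{l,j}(f_j)$ suggests) would leave you with the non-invertible operator $(1-P)\sigma_D(\ngm)^l$ to contend with.
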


\begin{proof}
	By the construction of $\tilde{P}$, we have $\operatorname{ker}\tilde{P}=H^k_B(\Sigma,E)$.
	The embedding $H^{k}_B(\Sigma,E)\hookrightarrow H^{k-1}(\Sigma,E)$ is compact by the Rellich lemma and \eqref{estimate2} says
	\[
	\norm{\psi}_{H^k}^2\leq C \left( \norm{\psi}^2_{H^{k-1}}+\norm{D\psi}^2_{H^{k-1}}\right)
	\]
	for all $\psi\in H^{k}_B(\Sigma,E)$.
	The implication ``(ii) $\Rightarrow$ (i)'' in Proposition~A.3 in \cite{BB1} now shows that $D|_{\operatorname{ker}\tilde{P}}: \operatorname{ker}\tilde{P}\to H^{k-1}(\Sigma,E)$ has finite dimensional kernel and closed image.
	By Proposition~A.1 in \cite{BB1}, $D\oplus \tilde{P}$ has finite dimensional kernel and closed image as well.
	Inequality \eqref{eq:DPest} follows upon applying (iv) of Proposition~A.3 in \cite{BB1}.
\end{proof}
We conclude this section with some examples for elliptic and $\infty$-regular boundary conditions that we will use later on.
\begin{bsp}
		Let $E|_{\partial \Sigma}=E^+\oplus E^-$ be an orthogonal splitting of the bundle $E$ restricted to the boundary.
		Let $\chi$ be the selfadjoint involution of $E|_{\partial \Sigma}$ which acts by $\pm 1$ on $E^\pm$.
		One calls $\chi$ a \emph{boundary chirality} (w.r.t.\ $A$) if $\chi$ anticommutes with $A$.
		The associated boundary conditions $B_{\pm}=H^{\frac 12}(\partial\Sigma, E^\pm)$ are elliptic by Theorem~\ref{critEll}~\ref{critEll2}.

		Moreover, the adjoint boundary conditions are given by $B_{\pm}^\ast=\sigma_D(\ngm)B_{\mp}$. 
		Hence if we additionally assume that $\chi$ anticommutes with $\sigma_D(\ngm)$, the boundary conditions $B_{\pm}$ are selfadjoint.
\end{bsp}
\begin{bsp}\label{bsp_elliptic}
		Recall that $\chi^-(A)$ is the projection onto the sum of eigenspaces of $A$ to nonpositive eigenvalues.
		The \emph{Atiyah-Patodi-Singer} (APS) condition $B_{\APS}=\chi^-(A)H^{\frac{1}{2}}(\partial\Sigma,S\Sigma|_{\partial\Sigma})$ is probably the most prominent example of a non-local boundary condition.
		The APS boundary conditions are pseudo local, see for example Proposition~14.2 in \cite{BW}.
		By Theorem~\ref{critEll}, the APS boundary condition is elliptic and even $\infty$-regular.
		Furthermore, the APS condition is selfadjoint, i.e.\ $B_{\APS}^\ast=B_{\APS}$, if and only if $\operatorname{ker}A=\{0\}$.
\end{bsp}
\begin{bsp} \label{bsp:trans}
		Let $\Sigma$ be a closed Riemannian manifold and $E\to \Sigma$ be a Hermitian vector bundle and $D$ be a Dirac-type operator acting on sections of $E$.
		Let $\mathcal{M}\subseteq \Sigma$ be a compact hypersurface with trivial normal bundle.
		Cut $\Sigma$ along $\mathcal{M}$ to obtain a compact Riemannian manifold $\Sigma'$ with boundary $\partial \Sigma'=\mathcal{M}_1\sqcup \mathcal{M}_2$, where $\mathcal{M}_1$ and $\mathcal{M}_2$ are two copies of $\mathcal{M}$ with opposite relative orientations in $\Sigma'$.
		We get an induced vector bundle $E'\to \Sigma'$ and a Dirac-type operator $D'$ acting on sections of $E'$.

		Any $\psi\in H^1(\Sigma,E)$ yields $\psi'\in H^1(\Sigma',E')$ such that $\psi'|_{\mathcal{M}_1}=\psi'|_{\mathcal{M}_2}$ and $\psi=\psi'$ on $\Sigma\setminus \mathcal{M} = \Sigma'\setminus\partial\Sigma'$.
		This motivates the \emph{transmission conditions} for $D'$ on $\Sigma'$.
		We set
		\[
		B:=\{(\psi,\psi)\in H^{\frac 12}(\mathcal{M}_1,E|_{\mathcal{M}_1})\oplus H^{\frac 12}(\mathcal{M}_2,E|_{\mathcal{M}_2}); \psi \in H^{\frac 12}(\mathcal{M},E|_\mathcal{M})\},
		\]
		where we identify
		\[
		H^{\frac 12}(\mathcal{M}_1,E|_{\mathcal{M}_1})=H^{\frac 12}(\mathcal{M}_2,E|_{\mathcal{M}_2})=H^{\frac 12}(\mathcal{M},E|_{\mathcal{M}}).
		\]
		Let  $A=A_0\oplus -A_0$ be a boundary operator for $D'$, where $A_0$ is a selfadjoint Dirac-type operator on $C^\infty(\mathcal{M},E|_\mathcal{M})$.
		The transmission conditions are $\infty$-regular selfadjoint boundary conditions, see \cite{BB2}.
		Note that the transmission conditions are not pseudolocal.
\end{bsp}

\section{Families of boundary value problems} \label{ConFunCal}

The results of the previous section will be applied to spacelike Cauchy hypersurfaces in a Lorentzian spacetime. 
Due to the additional time dimension, we will have to consider families of boundary value problems, parametrized by a suitable time function.
In this section we will study the continuity properties of such families.

Let $(\Sigma,g)$ be a compact Riemannian manifold with boundary, $E\to \Sigma$ a Hermitian vector bundle and $D_t\colon C^\infty(\Sigma,E)\to C^\infty(\Sigma,E)$ a family of formally selfadjoint Dirac-type operators with coefficients depending smoothly on $t\in\mathbb{R}$.
Let $\ngm$ be the interior unit conormal to $\partial\Sigma$. 
Furthermore, let $\{P_t\}_{t\in\mathbb{R}}$ be a family of orthogonal Grassmannian projections (see Definition~\ref{Grassmannian:Def}) and $\{B_t\}_{t\in\mathbb{R}}$ be the family of the corresponding selfadjoint boundary conditions $B_t=P_t(H^\frac 12 (\partial\Sigma,E|_{\partial\Sigma}))$.
Recall that for each $t\in\mathbb{R}$, $l\in\mathbb{N}$ and $\varepsilon>0$ we have a bounded operator
$$
D_{t,B_t}^lJ_{B_t}^{(\varepsilon)}\colon L^2(\Sigma,E)\to L^2(\Sigma,E).
$$

\begin{lem}\label{HkContinuity}
	Let $\{D_t\}_{t\in\mathbb{R}}$ be as described above, $\{P_t\}_{t\in\mathbb{R}}$ be a family of Grassmannian projections. 
	Assume that $t\mapsto P_t$ is $H^s$-norm continuous for every $s\in\mathbb{N}_0$.
	Then 
	\[
	t\mapsto D_{t,B_t}^lJ_{B_t}^{(\varepsilon)}
	\]
	is $H^k$-norm continuous for every $k,l\in\mathbb{N}_0$ and $\varepsilon>0$.
\end{lem}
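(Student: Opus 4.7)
Writing $F_t:=D_{t,B_t}^l J_{B_t}^{(\varepsilon)}=f_{l,\varepsilon}(D_{t,B_t})$ with the Schwartz function $f_{l,\varepsilon}(x):=x^l e^{-\varepsilon(1+x^2)}$, the plan is to first establish norm resolvent continuity of the family $\{D_{t,B_t}\}$ on $L^2(\Sigma,E)$, then invoke the continuous functional calculus to conclude $L^2$-norm continuity of $F_t$, and finally bootstrap this to $H^k$-norm continuity by means of the a priori estimate of Corollary~\ref{koro:estimate4}.

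\emph{Step 1 (Norm resolvent continuity).}
Fix $z=i\lambda$ with $\lambda>0$ and $\psi\in L^2(\Sigma,E)$. For $\bullet\in\{s,t\}$ set $u_\bullet:=(D_{\bullet,B_\bullet}-z)^{-1}\psi$, choose a bounded right inverse $\mathcal{E}\colon H^{\frac 12}(\partial\Sigma,E|_{\partial\Sigma})\to H^1(\Sigma,E)$ of the trace $R$, and correct $u_s$ to
\[
\tilde u_s:=u_s+\mathcal{E}\bigl((P_t-P_s)Ru_s\bigr)\in H^1(\Sigma,E).
\]
Using $Ru_s=P_sRu_s$, one checks $R\tilde u_s=P_tRu_s\in B_t$, so $\tilde u_s\in\dom(D_{t,B_t})$. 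Applying $D_{t,B_t}-z$ to $u_t-\tilde u_s$ yields
\[
u_t-\tilde u_s=(D_{t,B_t}-z)^{-1}\bigl[(D_s-D_t)u_s-(D_t-z)(\tilde u_s-u_s)\bigr].
\]
Combining the uniform elliptic estimate $\|u_s\|_{H^1}\leq C\|\psi\|_{L^2}$ with $\|\tilde u_s-u_s\|_{H^1}\leq C\|P_t-P_s\|_{H^{\frac 12}\to H^{\frac 12}}\|\psi\|_{L^2}$ and the smooth $t$-dependence $\|D_t-D_s\|_{H^1\to L^2}\to 0$, we obtain $\|(D_{t,B_t}-z)^{-1}-(D_{s,B_s}-z)^{-1}\|_{L^2\to L^2}\to 0$ as $t\to s$.

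\emph{Steps 2 and 3 (Functional calculus and $H^k$-bootstrap).}
Since $f_{l,\varepsilon}\in C_0(\mathbb{R})$, norm resolvent continuity implies $L^2$-norm continuity of $t\mapsto f(D_{t,B_t})$ for every $f\in C_0(\mathbb{R})$ (e.g.\ via the Helffer--Sj\"{o}strand formula), so in particular $t\mapsto F_t$ and, for every $j\in\mathbb{N}_0$, $t\mapsto D_{t,B_t}^{j}F_t=f_{l+j,\varepsilon}(D_{t,B_t})$ are $L^2$-norm continuous. To upgrade to $H^k$, I apply Corollary~\ref{koro:estimate4} with $D=D_t$ and $P=P_t$ to $(F_t-F_s)\varphi$. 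The boundary terms $(1-P_t)R(D_t^j F_t\varphi)$ vanish identically since $D_t^jF_t\varphi\in H^\infty_{B_t}$, whereas $(1-P_t)R(D_t^jF_s\varphi)$ splits as
\[
(P_s-P_t)R(D_s^jF_s\varphi)+(1-P_t)R\bigl((D_t^j-D_s^j)F_s\varphi\bigr),
\]
both tending to zero by the hypothesized continuity of $P_\bullet$ and the smooth dependence of $D_\bullet$. The bulk terms $\|(F_t-F_s)\varphi\|_{H^{k-1}}$ and $\|D_t(F_t-F_s)\varphi\|_{H^{k-1}}$ are handled by induction on $k$; for the latter the identity $D_tF_s\varphi=D_{s,B_s}^{l+1}J_{B_s}^{(\varepsilon)}\varphi+(D_t-D_s)F_s\varphi$ reduces the question to $H^{k-1}$-continuity of the family with $l$ replaced by $l+1$. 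The base case $k=0$ is Step~2.

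\emph{Main obstacle.}
The heart of the proof is the construction of the corrector $\tilde u_s$: it bridges the gap between $\dom(D_{s,B_s})$ and $\dom(D_{t,B_t})$, allowing the standard resolvent comparison, and its smallness in $H^1$ is precisely what $H^{\frac 12}$-norm continuity of $P_\bullet$ provides. The full $H^s$-norm continuity for every $s\in\mathbb{N}_0$ is needed in Step~3 to treat the higher-order boundary terms and to secure local uniformity (in $t$) of the constant in Corollary~\ref{koro:estimate4}.
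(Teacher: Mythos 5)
Your proposal is correct in substance and has the same two–stage architecture as the paper's proof — an operator-norm continuity statement on $L^2$ as the base case, then an induction on $k$ driven by the a priori estimate \eqref{eq:DPest} — but the base case is reached by a genuinely different route. The paper passes from $L^2$-continuity of $P_t$ to gap continuity of $D_{t,B_t}$ by quoting Theorem~3.9 of \cite{Lesch1}, then to norm continuity of the Cayley transforms (Theorem~1.1 in \cite{Lesch1}), and concludes via continuity of the continuous functional calculus (Proposition~3.2.10 in \cite{CStar}). You instead prove norm resolvent continuity directly, with the corrector $\tilde u_s=u_s+\mathcal{E}\bigl((P_t-P_s)Ru_s\bigr)$ bridging the two domains, and then apply Helffer--Sj\"ostrand; this is self-contained, uses the graph-norm/$H^1$-norm equivalence of Corollary~\ref{koro:closed} only at the \emph{fixed} parameter $s$, and makes transparent exactly where the continuity of $P_\bullet$ enters. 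The induction step is essentially the paper's computation with the roles of the fixed and the moving parameter interchanged: you apply Corollary~\ref{koro:estimate4} with the moving data $D_t,P_t$, so the boundary term built from $F_t$ vanishes on the nose and only the $F_s$-term needs the splitting $(P_s-P_t)R(D_s^jF_s\varphi)+(1-P_t)R\bigl((D_t^j-D_s^j)F_s\varphi\bigr)$, which is the same mechanism as the paper's treatment of its term (III).

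Two loose ends should be tightened. First, your Step~1 and the boundary estimates use operator-norm continuity of $P_\bullet$ on the half-integer spaces $H^{1/2}$ and $H^{k-j-\frac12}$, while the hypothesis is stated for integer $s$; this is harmless (interpolate between $L^2$ and $H^1$, or bound the half-integer norm by a large integer norm $H^{s_0}$ as the paper does), but say so. Second, applying Corollary~\ref{koro:estimate4} at the moving parameter $t$ means the constant in \eqref{eq:DPest} depends on $t$, and its local uniformity is not automatic from the statement of the corollary; you flag this but do not prove it. It can be repaired either by a routine absorption argument (start from the estimate at the fixed $s$ and absorb the errors $\|(D_t-D_s)\varphi\|_{H^{k-1}}$ and $\|(P_t-P_s)RD_s^j\varphi\|$ for $t$ close to $s$, using the smooth $t$-dependence of $D_t$ and the assumed continuity of $P_t$), or avoided altogether by applying the estimate at the fixed basepoint, as the paper does at $t_0$ — then the vanishing and splitting of the boundary terms is merely rearranged and no uniformity is needed. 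With either repair your argument is complete.
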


\begin{proof}
	The proof is by induction on $k$.
	First, we show the case $k=0$.
		Since $t\mapsto P_t$ is $L^2$-norm continuous, Theorem~3.9 in \cite{Lesch1} implies that $D_{t,B_t}$ is continuous w.r.t.\ the gap metric.
	By Theorem~1.1 in \cite{Lesch1}, the path of the corresponding Cayley transforms $t\mapsto U_t:=\kappa(D_{t,B_t})$ is $L^2$-norm continuous.
	Here $\kappa\colon\mathbb{R}\to S^1\subset\mathbb{C}$ is the function given by $x\mapsto \frac{x-i}{x+i}$.

    The inverse of the Cayley transform is given by the function $S^1\setminus\{1\}\to \mathbb{R}$, $z\mapsto i\frac{1+z}{1-z}$.
    The function $\lambda_\varepsilon\colon S^1\setminus\{1\} \to \mathbb{R}$, $z\mapsto \left(i\frac{1+z}{1-z}\right)^l\cdot e^{-\varepsilon\left(1+i \left(\frac{1+z}{1-z}\right)^2\right)}$ extends continuously to $1$ by zero.
    Proposition~3.2.10. in \cite{CStar} now implies that
    \[
    t \mapsto D_{t,B_t}^lJ_{B_t}^{(\varepsilon)} = \lambda_\varepsilon(U_t)
    \]
    is $L^2$-norm continuous.    
     Hence, the claim is shown for $k=0$. Now we do the induction step $k\mapsto k+1$. 
    
    	Here and in the following, we denote for two Hilbert spaces $\mathscr{H}_1$ and $\mathscr{H}_2$ the operator norm on the space of bounded operators $\mathscr{H}_1\to\mathscr{H}_2$  by $\norm{\cdot}_{\mathscr{H}_1,\mathscr{H}_2}$.
    	
     We show continuity at $t_0\in\mathbb{R}$.
     For $t$ near $t_0$ we we use \eqref{eq:DPest} to estimate:
     	
     \begin{align*}
     	\Big\|D_{t_0,B_{t_0}}^l J_{B_{t_0}}^{(\varepsilon)}&-D_{t,B_{t}}^lJ^{(\varepsilon)}_{B_{t}}\Big\|_{H^{k+1},H^{k+1}}
     	\leq
     	C_1\Big{(}\underbrace{\norm{D_{t_0,B_{t_0}}^l J_{B_{t_0}}^{(\varepsilon)}-D_{t,B_{t}}^lJ^{(\varepsilon)}_{B_{t}}}_{H^{k+1},H^k}}_{=:(\mathrm{I})} \\
     	&+\underbrace{\norm{D_{t_0}\left(D_{t_0,B_{t_0}}^l J_{B_{t_0}}^{(\varepsilon)}-D_{t,B_{t}}^lJ^{(\varepsilon)}_{B_{t}}\right)}_{H^{k+1},H^{k}}}_{=:(\mathrm{II})}  \\
     	&+\underbrace{\sum_{m=0}^{k}\norm{(1-P_{t_0}) RD_{t_0}^m\left(D_{t_0,B_{t_0}}^l J_{B_{t_0}}^{(\varepsilon)}-D_{t,B_{t}}^lJ^{(\varepsilon)}_{B_{t}}\right)}_{H^{k+1},H^{k-m+\frac{1}{2}}}}_{=:(\mathrm{III})}\Big{)} .
     \end{align*}
We estimate the terms (I), (II), and (III) separately.
Using $\norm{\cdot}_{H^k}\leq C \norm{\cdot}_{H^{k+1}}$, we can estimate term (I) by
 \[
 (\mathrm{I})\leq C_2 \norm{D_{t_0,B_{t_0}}^l J_{B_{t_0}}^{(\varepsilon)}-D_{t,B_{t}}^lJ^{(\varepsilon)}_{B_{t}}}_{H^k,H^k},
 \]
 where the RHS goes to zero as $t\to t_0$ by induction hypothesis.
 	For the second term we find
 \begin{align*}
 	(\mathrm{II})
 	&\leq
 	\norm{D^{l+1}_ {t_0,B_{t_0}}J^{(\varepsilon)}_{B_{t_0}}-D^{l+1}_{t,B_{t}}J^{(\varepsilon)}_{B_{t}}}_{H^{k+1},H^k}
 	+\norm{(D_{t}-D_{t_0})D^{l}_{t,B_{t}}J^{(\varepsilon)}_{B_{t}}}_{H^{k+1},H^k}  \\
 	&\leq C_3 \norm{D^{l+1}_ {t_0,B_{t_0}}J^{(\varepsilon)}_{B_{t_0}}-D^{l+1}_{t,B_{t}}J^{(\varepsilon)}_{B_{t}}}_{H^{k},H^k}
 	+\norm{(D_{t}-D_{t_0})D^{l}_{t,B_{t}}J^{(\varepsilon)}_{B_{t}}}_{H^{k+1},H^k} .
 \end{align*}
The first summand again tends to zero as $t\to t_0$ by induction hypothesis.
As for the second, we see
\begin{align*}
	\norm{(D_{t}-D_{t_0})D^{l}_{t,B_{t}}J^{(\varepsilon)}_{B_{t}}}_{H^{k+1},H^k}
	\leq
	\underbrace{\norm{D_{t}-D_{t_0}}_{H^{k+1},H^k}}_{=:(\mathrm{IIa})}\underbrace{\norm{D^{l}_{t,B_{t}}J^{(\varepsilon)}_{B_{t}}}_{H^{k+1},H^{k+1}}}_{=:(\mathrm{IIb})} .
\end{align*}
Since the coefficients of $D_{t}$ depend smoothly on $t$, the term (IIa) tends to zero as $t\to t_0$.
Furthermore, the range of $D^{l}_{t,B_{t}}J^{(\varepsilon)}_{B_{t}}$ is contained in $H^\infty_{B_{t}}(\Sigma,E)$.
Thus applying \eqref{estimate1} iteratively yields
\[
(\mathrm{IIb}) \le
C_4 \sum_{m=l}^{l+k+1} \norm{D^{m}_{t,B_{t}}J^{(\varepsilon)}_{B_{t}}}^2_{L^2,L^2}.
\]
	The RHS is a continuous function by the induction claim for $k=0$.
		Hence (IIb) remains bounded for $t$ near $t_0$ and term (II) goes to zero.
		
		It remains to control term (III).
		Each summand in (III) can be estimated as follows:
			\begin{align}
			\Big\|&(1-P_{t_0}) RD_{t_0}^m\left(D_{{t_0},B_{t_0}}^l J_{B_{t_0}}^{(\varepsilon)}-D_{t,B_{t}}^lJ^{(\varepsilon)}_{B_{t}}\right)\Big\|_{H^{k+1},H^{k-m+\frac{1}{2}}} \notag\\
			&=
			\Big\|(1-P_{t_0}) RD_{t_0}^mD_{t,B_{t}}^lJ^{(\varepsilon)}_{B_{t}}\Big\|_{H^{k+1},H^{k-m+\frac{1}{2}}} \label{eq:III1}\\
			&\leq 
			\norm{(1-P_{t_0}) RD_{t,B_{t}}^{l+m}J^{(\varepsilon)}_{B_{t}}\psi}_{H^{k+1},H^{k-m+\frac{1}{2}}}+\norm{(1-P_{t_0}) R(D_{t}^m-D_{t_0}^m)D_{t,B_{t}}^{l}J^{(\varepsilon)}_{B_{t}}}_{H^{k+1},H^{k-m+\frac{1}{2}}} \notag \\
			&=
			\norm{(P_{t}-P_{t_0}) RD_{t,B_{t}}^{l+m}J^{(\varepsilon)}_{B_{t}}}_{H^{k+1},H^{k-m+\frac{1}{2}}}
			+\norm{(1-P_{t_0}) R(D_{t}^m-D_{t_0}^m)D_{t,B_{t}}^{l}J^{(\varepsilon)}_{B_{t}}}_{H^{k+1},H^{k-m+\frac{1}{2}}} \label{eq:III2} \\
				&\leq  
			\norm{(P_{t}-P_{t_0}) RD_{t,B_{t}}^{l+m}J^{(\varepsilon)}_{B_{t}}}_{H^{k+1},H^{s_0}}
			+\norm{(1-P_{t_0}) R(D_{t}^m-D_{t_0}^m)D_{t,B_{t}}^{l}J^{(\varepsilon)}_{B_{t}}}_{H^{k+1},H^{s_0}}  \label{eq:III3} \\
			&\leq 
			C_5\norm{P_{t}-P_{t_0}}_{H^{s_0},H^{s_0}}\norm{D_{t,B_{t}}^{l+m}J^{(\varepsilon)}_{B_{t}}}_{H^{k+1},H^{s_0+\frac 12}}\notag\\
			&\quad +C_5 \norm{1-P_{t_0}}_{H^{s_0},H^{s_0}}\norm{(D_{t}^m-D_{t_0}^m)D_{t,B_{t}}^{l}J^{(\varepsilon)}_{B_{t}}}_{H^{k+1},H^{s_0+\frac 12}} \label{eq:III4}\\
			&\leq  
			C_5\underbrace{\norm{P_{t}-P_{t_0}}_{H^{s_0},H^{s_0}}}_{=:\mathrm{(IIIa)}} \underbrace{\norm{D_{t,B_{t}}^{l+m}J^{(\varepsilon)}_{B_{t}}}_{H^{k+1},H^{s_0+\frac 12}}}_{=:\mathrm{(IIIb)}}\notag\\
			&\quad+C_5 \underbrace{\norm{1-P_{t_0}}_{H^{s_0},H^{s_0}}}_{=:\mathrm{(IIIc)}} \underbrace{\norm{D_{t}^m-D_{t_0}^m}_{H^{s_0+m+\frac 12},H^{s_0+\frac 12}}}_{=:\mathrm{(IIId)}} \underbrace{\norm{D_{t,B_{t}}^{l}J^{(\varepsilon)}_{B_{t}}}_{H^{k+1},H^{s_0+m+\frac 12}}}_{=:\mathrm{(IIIe)}} . \notag
		\end{align}
		In \eqref{eq:III1} we used that the range of $J_{B_{t_0}}^{(\varepsilon)}$ is contained in $H^\infty_{B_{t_0}}(\Sigma,E)$ so that $RD_{t_0}^{m+l} J^{(\varepsilon)}_{B_{t_0}}\psi$ lies in the kernel of $1-P_{t_0}$.
	A similar argument yields \eqref{eq:III2}.
	In \eqref{eq:III3} we have chosen $s_0$ large enough so that $s_0\ge k-m+\frac12$.
	The trace theorem yields \eqref{eq:III4}.
		
	Now observe that the expressions (IIIa) and (IIId) tend to zero as $t\to t_0$ while (IIIb), (IIIc), and (IIIe) remain bounded.
	Thus (III) tends to zero which concludes the proof.
\end{proof}

The following special case of Lemma~\ref{HkContinuity} will be convenient later for discussing explicit examples in Section~\ref{sec:examples}.

\begin{lem} \label{APS-continuity}
	Let $\{D_t\}_{t\in\mathbb{R}}$ be as described above.
	Let $\{A_t\}_{t\in\mathbb{R}}$ be a family of corresponding boundary operators on $\partial\Sigma$ such that the coefficients of $A_t$ depend smoothly on $t$. 
	Let $\{P_t\}_{t\in\mathbb{R}}$ be a family of Grassmannian projections such that $P_t A_t=A_t P_t$ for all $t$ or $P_t A_t=-A_t P_t$ for all $t$.
	Assume that $t\mapsto P_t$ is $L^2$-norm continuous. 
	Then
	\[
	t\mapsto D_{t,B_t}^l J_{B_t}^{(\varepsilon)}
	\]
	is $H^k$-norm continuous for all $k,l\in\mathbb{N}_0$ and $\varepsilon>0$.	
\end{lem}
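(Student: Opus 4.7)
The plan is to deduce Lemma~\ref{APS-continuity} directly from Lemma~\ref{HkContinuity}. For this it suffices to upgrade the assumed $L^2$-norm continuity of $t\mapsto P_t$ to $H^s$-norm continuity for every $s\in\mathbb{N}_0$. My strategy is to combine uniform $H^{2m}$-boundedness of $P_t$---which will follow from the (anti-)commutation with $A_t$---with complex interpolation against the given $L^2$ continuity.

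\medskip
The first step is, for fixed $t_0\in\mathbb{R}$ and a compact interval $I\ni t_0$, to establish that $\norm{P_t}_{H^{2m},H^{2m}}$ is bounded uniformly in $t\in I$ for every $m\in\mathbb{N}$. The sign hypothesis $P_tA_t=\pm A_tP_t$ gives $P_tA_t^2=A_t^2P_t$ in either case, and hence $P_tA_t^{2m}=A_t^{2m}P_t$. Since $A_t$ is a formally selfadjoint elliptic first-order operator on the closed manifold $\partial\Sigma$ whose coefficients depend smoothly on $t$, standard elliptic estimates yield the norm equivalence
\[
\norm{\varphi}_{H^{2m}}^2 \simeq \norm{\varphi}_{L^2}^2 + \norm{A_t^{2m}\varphi}_{L^2}^2
\]
with constants that, by smoothness of the coefficients and compactness of $\partial\Sigma$, can be chosen uniformly in $t\in I$. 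Combined with $\norm{P_t}_{L^2,L^2}\le 1$ and the intertwining relation $A_t^{2m}P_t=P_tA_t^{2m}$, this bounds $\norm{P_t}_{H^{2m},H^{2m}}\le C_m$ independently of $t\in I$.

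\medskip
The second step is complex interpolation. On the closed manifold $\partial\Sigma$ one has $[L^2,H^{2m}]_{s/(2m)}=H^s$ for $0\le s\le 2m$, and the corresponding operator-norm interpolation inequality applied to $T=P_t-P_{t_0}$ yields
\[
\norm{P_t-P_{t_0}}_{H^s,H^s} \le C\,\norm{P_t-P_{t_0}}_{L^2,L^2}^{1-s/(2m)}\,\norm{P_t-P_{t_0}}_{H^{2m},H^{2m}}^{s/(2m)}.
\]
The first factor tends to $0$ as $t\to t_0$ by hypothesis, while the second is bounded by $2C_m$. Since $s\in\mathbb{N}_0$ is arbitrary (choose $m\ge s/2$), $t\mapsto P_t$ is $H^s$-norm continuous for every $s\in\mathbb{N}_0$, and Lemma~\ref{HkContinuity} delivers the conclusion.

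\medskip
The main delicate point is the $t$-uniformity of constants in Step~1: one has to inspect the standard elliptic estimate to verify that the constants depend only on finitely many Sobolev seminorms of the coefficients of $A_t$, which are locally uniformly bounded on $I$. Everything else---the intertwining with $A_t^{2m}$, the contraction property of orthogonal projections, and the interpolation inequality---is routine.
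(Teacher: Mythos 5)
Your proposal is correct, and it reaches the conclusion by a genuinely different route than the paper at the decisive step, namely upgrading the assumed $L^2$-norm continuity of $t\mapsto P_t$ to $H^s$-norm continuity so that Lemma~\ref{HkContinuity} applies (the reduction to Lemma~\ref{HkContinuity} itself is the same in both arguments). The paper proceeds by induction on the Sobolev order: it applies the elliptic estimate for $A_t$ (with locally $t$-uniform constants) directly to the difference $P_{t_0}-P_t$ and uses the relation $P_tA_t=\pm A_tP_t$ to telescope $A_{t_0}P_{t_0}-A_tP_t$ into $P_{t_0}A_{t_0}-P_tA_t$ plus error terms controlled by the induction hypothesis and the smooth $t$-dependence of the coefficients of $A_t$. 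You instead prove locally $t$-uniform boundedness of $P_t$ on $H^{2m}$, using only the consequence $P_tA_t^{2m}=A_t^{2m}P_t$ together with the uniform elliptic norm equivalence and $\norm{P_t}_{L^2,L^2}\le 1$, and then interpolate the operator norm of $P_t-P_{t_0}$ between $L^2$ and $H^{2m}$. What each buys: the paper's induction is elementary (no interpolation theory needed), while your argument cleanly separates ``uniform high-order bounds'' from ``low-norm convergence'' and, notably, only uses commutation with $A_t^2$, so it would even tolerate the sign in $P_tA_t=\pm A_tP_t$ varying with $t$, which the paper's telescoping step implicitly relies on being constant. Two small points: in the last step you must take $m>s/2$ strictly (with $m=s/2$ the exponent of the $L^2$ factor is $0$ and the bound does not tend to zero), and the $t$-uniformity of the elliptic constant in Step~1 deserves the one-line justification you indicate; this is exactly the same level of care the paper's own proof takes for its constant $C$ near $t_0$.
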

\begin{proof}
	In order to apply Lemma~\ref{HkContinuity} we will show by induction that $t\mapsto P_t$ is $H^k$-norm continuous for all $k\in\mathbb{N}_0$.
	For $k=0$ this holds by assumption.
	To perform the induction step, we assume $H^k$-norm continuity and show $H^{k+1}$-norm continuity.
	
	We show continuity at $t_0\in\mathbb{R}$.
	For $t$ near $t_0$ we estimate:
	\begin{align}
		\norm{P_{t_0}-P_{t}}_{H^{k+1},H^{k+1}}
		&\leq
		C\big(\norm{A_{t}(P_{t_0}-P_{t})}_{H^{k+1},H^{k}}+\norm{P_{t_0}-P_{t}}_{H^{k+1},H^{k}}\big) \label{eq:APScont1}\\
		&\leq
		C\big(\norm{A_{t_0}P_{t_0}-A_{t}P_{t}}_{H^{k+1},H^{k}} + \norm{(A_{t_0}-A_{t})P_{t_0}}_{H^{k+1},H^{k}} \notag\\
		&\quad+ \norm{P_{t_0}-P_{t}}_{H^{k+1},H^{k}}\big) \notag\\
		&\leq
		C\big(\norm{P_{t_0}A_{t_0}-P_{t}A_{t}}_{H^{k+1},H^{k}} \notag\\
		&\quad+\norm{A_{t}-A_{t_0}}_{H^{k+1},H^{k}} \norm{P_{t_0}}_{H^{k+1}, H^{k+1}}+\norm{P_{t_0}-P_{t}}_{H^{k},H^{k}}\big). \label{eq:APScont2}
	\end{align}
	In \eqref{eq:APScont1} we used the elliptic estimate for the first-order operators $A_{t}$ on the closed manifold $\partial\Sigma$.
	Since the coefficients of $A_{t}$ depend smoothly on $t$, the constant $C$ can be chosen uniformly for $t$ near $t_0$.
	In \eqref{eq:APScont2} we used that $P_{t}A_t=\pm A_tP_t$.

	As $t\to t_0$, the last term in \eqref{eq:APScont2} tends to zero by induction hypothesis.
	The second term converges to zero because $A_t$ is a bounded operator from $H^{k+1}$ to $H^{k}$ and has coefficients depending smoothly on $t$.
	The first one goes to zero by both arguments combined.
	Hence the projection family is $H^{k+1}$-norm continuous and the claim follows.
\end{proof}

\begin{bem}
	In Remark~\ref{bm:closedDirac} we discuss the natural boundary operator for the Riemannian spin Dirac operator.
	This choice of boundary operator depends on the spin Dirac operator itself and on the mean curvature of the boundary.
	Hence, in this setting, $D_t$ having coefficients depending smoothly on $t$ implies directly that the coefficients of $A_t$ also depend smoothly on $t$.
\end{bem}

\begin{bem}\label{bem-APS-cont}
	To apply Lemma~\ref{APS-continuity} to the APS boundary conditions let us additionally assume that $\operatorname{ker}A_t=\{0\}$ for all $t\in\mathbb{R}$.
	Since $A_t$ has coefficients depending smoothly on $t$ and $\partial\Sigma$ is a closed manifold, Lemma~3.3 in \cite{Lesch2} shows that $\chi^-(A_t)$ is $L^2$-norm continuous.
	Furthermore, the projection $\chi^-(A_t)$ commutes with $A_t$.
	Thus Lemma~\ref{APS-continuity} applies.
\end{bem}

\section{The Dirac operator on Lorentzian manifolds}
\label{sec:preliminaries}

Next we collect the most important facts about spacetimes with timelike boundary and the Lorentzian Dirac operator. 
We will require some familiarity with Loren\-tzian geometry and spin geometry. 
The reader may consult \cite{Oneill}  for an introduction to Lorentzian geometry and \cite{BGM,Baum} for semi-Riemannian spin geometry.

\subsection{Spacetimes with timelike boundary} \label{sec:timelike} 
Let us start by recalling the concept of timelike boundary.

\begin{defi} 
	A \emph{Lorentzian manifold with timelike boundary} $(M,g)$ is a Lorentzian manifold with boundary such that $\iota^\ast g$ is a Lorentzian metric on the boundary where $\iota\colon\partial M\hookrightarrow M$ is the natural inclusion map.
	
	A \emph{spacetime with timelike boundary} is a time-oriented Lorentzian manifold with timelike boundary. 
\end{defi}
A time orientation on $M$ can be represented by a continuous timelike vector field on $M$.
There is no loss of generality in assuming that this vector field is tangent to the boundary along $\partial M$, compare Proposition~2.4 in \cite{AFS}.
Hence the boundary inherits a time orientation.

\begin{bsp}\label{bsp_timelike}
	\begin{enumerate}[label=(\arabic*), wide, labelwidth=!, labelindent=0pt]
		\item \label{bsp_timelike1}
		Let $M=\mathbb{R}\times\bar{B}_1(0)$ where $\bar{B}_1(0)\subseteq\mathbb{R^2}$ is the 2-dimensional closed unit disk.
		The embedding of $M$ into the Minkowski space $(\mathbb{R}^{1,2},g_\mathrm{Min})$ provides $M$ with a Lorentzian metric turning $(M,g_\mathrm{Min})$ into a spacetime with timelike boundary. 
		\item \label{bsp_timelike2}
		The Anti-deSitter (AdS) and the Anti-deSitter-Schwarzschild spacetimes are two famous examples.
		They have conformal timelike boundary, i.\,e.\ after compactifying in radial direction and carefully gluing in the boundary, they become spacetimes with timelike boundary. 
		We refer the interested reader to \cite{Frances,Ammon-Erdmenger,Sowkorsky}. 
	\end{enumerate}
\end{bsp}

As in the case of spacetimes without boundary, we call $(M,g)$ \emph{globally hyperbolic}, if there are no causal loops and the causal diamonds $J^+(p)\cap J^-(q)$ are compact for all $p,q\in M$.
A subset $\Sigma\subseteq M$ is called a \emph{Cauchy hypersurface} if it is intersected exactly once by every inextensible timelike curve. 

By Theorem~1.1 in \cite{AFS}, global hyperbolicity implies that there exists a Cauchy temporal function $T\colon M\to\mathbb{R}$ meaning that the gradient of $T$ is past-directed timelike and the level sets $\Sigma_t = T^{-1}(t)$ are smooth spacelike Cauchy hypersurfaces.
Moreover, $(M,g)$ is isometric to $(\mathbb{R}\times \Sigma, -N^2\dt{t}^2+g_t)$ where $\Sigma$ is a manifold with boundary, $N\in C^\infty(\mathbb{R}\times\Sigma)$ is strictly positive, and $g_t$ is a Riemannian metric on $\{t\}\times\Sigma$.

We will identify $M=\mathbb{R}\times \Sigma$ and $\Sigma_t=\{t\}\times\Sigma$.
The function $N$ will be called the induced \emph{lapse function}.
Let $\nu$ be the past pointing timelike unit vector field perpendicular to $\{\Sigma_t\}_{t\in\mathbb{R}}$.
Clearly, $\nu$ is proportional to the gradient of $T$.
Finally, again by Theorem~1.1 in \cite{AFS}, we may and will assume that $\nu$ is tangential to $\partial M$ along the boundary of $M$.
Then the interior unit normal field $\eta$ along $\partial M$ is perpendicular to $\nu$ and hence tangential to $\Sigma_t$ for every $t$, see Figure~\ref{fig:spacetime}.

\begin{figure}[h]
\begin{overpic}[scale=0.3]{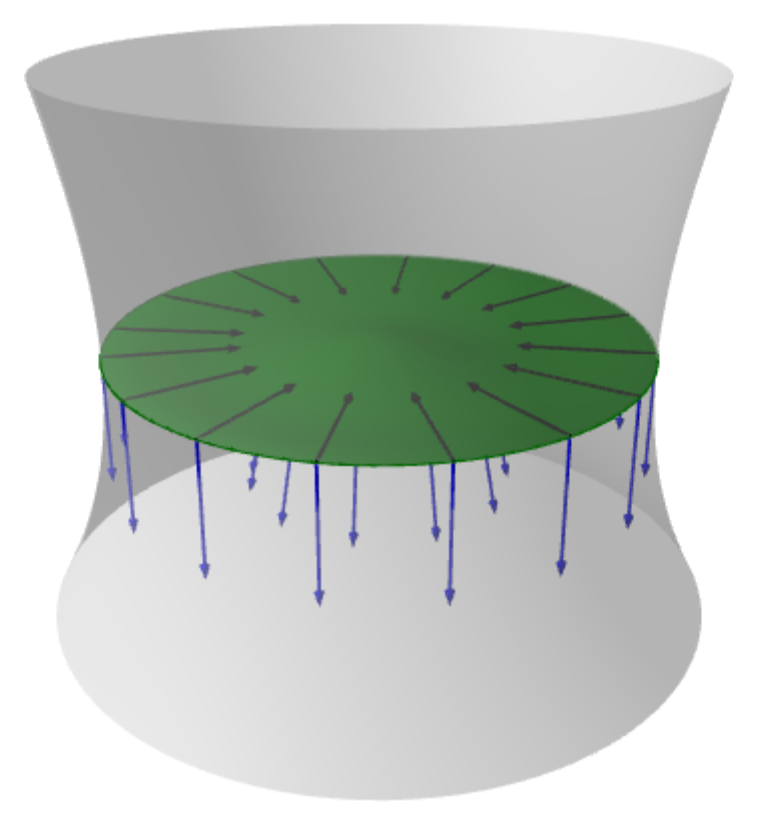}
\put(82,52){$\eta$}
\put(87,70){$\partial M$}
\put(41.5,22){\textcolor{blue}{$\nu$}}
\put(47,72){\textcolor[rgb]{0,.5,0}{$\Sigma_t$}}
\end{overpic}

\caption{Vector fields $\eta$ and $\nu$.}
\label{fig:spacetime}
\end{figure}

\begin{bsp}
	Example~\ref{bsp_timelike}~\ref{bsp_timelike1} is clearly globally hyperbolic while the discussion for Example~\ref{bsp_timelike}~\ref{bsp_timelike2} is more involved. 
	The boundaryless AdS spacetime has compact time, hence it is not causal and therefore not globally hyperbolic.
	By a suitable conformal change and compactification (see for example \cite{Sowkorsky}), one transforms the AdS spacetime into a spacetime with infinite time that is isometric to $\mathbb{R}\times B_1(0)$ with a cylindrical metric.
	Note that this is still not a globally hyperbolic spacetime, but by gluing in a timelike boundary in a suitable way, one ends up with a globally hyperbolic spacetime with timelike boundary, see for example \cite{Frances}.
\end{bsp}

\begin{bem}\label{bem:embedding}
We mentioned that, by the results in \cite{AFS}, any globally hyperbolic manifold $(M,g)$ with timelike boundary is isometric to $(\mathbb{R}\times \Sigma, -N^2\dt{t}^2+g_t)$.
If $\Sigma$ is compact (with or without boundary), then the converse is also true;
a Lorentzian manifold of the form $(\mathbb{R}\times \Sigma, -N^2\dt{t}^2+g_t)$ is globally hyperbolic.

Now, if $M$ has a timelike boundary, we can double $M$ along $\partial M$, i.e.\ we put $\bar{M}:=M \sqcup_{\partial M} M' = \mathbb{R}\times(\Sigma\sqcup_{\partial\Sigma}\Sigma')$.
Here $M'$ and $\Sigma'$ are copies of $M$ and $\Sigma$, respectively.
We can now extend $N$ to a positive smooth function $\bar{N}$ on $\bar{M}$ and the metrics $g_t$ to smooth Riemannian metrics $\bar{g}_t$ on $\Sigma\sqcup_{\partial\Sigma}\Sigma'$ so that they still depend smoothly on $t$.
Thus we have extended the Lorentzian metric on $M$ to a globally hyperbolic Lorentzian metric $-\bar{N}^2\dt{t}^2+\bar{g}_t$ on $\bar{M}$.
Hence, any spatially compact globally hyperbolic manifold with timelike boundary can be isometrically embedded into a globally hyperbolic manifold without boundary of the same dimension.
\end{bem}

\subsection{The spin Dirac operator on spacetimes with timelike boundary}\label{SubS:Dirac} 
Let $(M,g)$ be a globally hyperbolic \emph{spin} manifold with timelike boundary.
Denote the dimension of $M$ by $n+1$.
For the physically relevant case $n=3$, oriented globally hyperbolic spacetimes are always spinnable.
 
\medskip
\emph{From now onwards, we will always assume $M$ to be spatially compact, i.\,e.\ the Cauchy hypersurfaces of $M$ are compact.}
 
\medskip
Let $SM\to M$ be the complex spin bundle with its invariantly defined non-degenerate inner product $\SM{\cdot}{\cdot}$ and $\nabla^{SM}$ its metric connection. 
Let us denote by $C^\infty(M,SM)$ the space of smooth sections of $SM$.
Furthermore, for tangent vectors $X\in T_xM$ let us denote the \emph{Clifford multiplication} by $\gamma(X)\colon S_xM\to S_xM$.
Clifford multiplication is symmetric with respect  to $\SM{\cdot}{\cdot}$ and satisfies the Clifford relation
\begin{align*}
	\gamma(X)\gamma(Y)+\gamma(Y)\gamma(X)=-2g(X,Y)
\end{align*}
for all $x\in M$ and $X,Y\in T_xM$.
Furthermore, Clifford multiplication is parallel with respect to $\nabla^{SM}$ and the Levi-Civita connection $\nabla$ on $TM$, i.\,e.\ for $X,Y\in C^\infty(M,TM)$ and $\psi\in C^\infty(M,SM)$ one has
\begin{align*}
	\nabla^{SM}_X(\gamma(Y)\psi)=\gamma(Y)\nabla^{SM}_X \psi+\gamma(\nabla_X Y)\psi.
\end{align*}
 \medskip
The \emph{Lorentzian spin Dirac operator} $D$ acts on sections of $SM$.
Locally it is given by
\[
D=\sum_{j=0}^n \varepsilon_j \gamma(e_j)\nabla^ {SM}_{e_j},
\]
where $e_0,e_1,\dots,e_n$ is a Lorentzian orthonormal tangent frame and $\varepsilon_j=g(e_j,e_j)=\pm1$.

Let $\eta$ be the inward pointing spacelike unit normal field to $\partial M$, then the divergence theorem implies for all $\psi,\phi\in C^\infty_\c(M,SM)$, the space of compactly supported smooth spinors,
\begin{align}\label{Green}
	\int_M\SM{D\psi}{\phi}+\SM{\psi}{D\phi}\dt{\mu}_{M}=-\int_{\partial M} \SM{\gamma(\eta)\psi,\phi}\,\dt{\mu}_{\partial M},
\end{align}
where $\dt{\mu}_M$ is the volume element on $M$ with respect to $g$ and $\dt{\mu}_{\partial M}$ is the induced one  on $\partial M$.
\begin{bem}
	Note that \eqref{Green} does not imply that $D$ is formally anti-selfadjoint in a functional analytic sense.
	Since the Green formula above is using an \emph{indefinite} inner product, the integral does not define a $L^2$-scalar product on $M$ giving rise to a Hilbert space. To simplify the following discussion, we will still call an operator \emph{formally anti-selfadjoint} if it satisfies \eqref{Green}. 
\end{bem}

Spinorial data on the spacetime $M$ and on the Cauchy hypersurface $\Sigma_t$ are related as follows.
If $n$ is even, there is a canonical isomorphism
\[
SM|_{\Sigma_t}\cong S\Sigma_t
\]
with $\gamma(X)=-i\gamma(\nu)\gamma_t(X)$ and $\SM{\cdot}{\cdot}=\spann{\gamma(\nu),\cdot,\cdot}_{S\Sigma_t}$. 
Here $\gamma_t$ is the Clifford multiplication on $\Sigma_t$.
If $n$ is odd, then
\[
SM|_{\Sigma_t}\cong S\Sigma_t\oplus S\Sigma_t,
\]
with 
\[\gamma(X)=\Mat{0,i\gamma(\nu)\gamma_t(X);-i\gamma(\nu)\gamma_t(X),0},\]
and 
\[
\SM{\cdot}{\cdot}=\spann{\Mat{0,\gamma(\nu);\gamma(\nu),0}\cdot,\cdot}_{S\Sigma_t\oplus S\Sigma_t}.
\]
In both cases, $n$ being odd or even, the  inner product
\begin{align*}
	\spann{\cdot,\cdot}_0:=
	\spann{\gamma(\nu)\cdot,\cdot}_{SM}.
\end{align*}
is positive definite on $SM|_{\Sigma_t}$ since $\spann{\cdot,\cdot}_{S\Sigma_t}$ is positive definite.
We denote the corresponding norm by $\abs{\cdot}_0:=\sqrt{\spann{\cdot,\cdot}_0}$.
 For each $t\in\mathbb{R}$, let $L^2(\Sigma_t, SM|_{\Sigma_t})$ be the canonical $L^2$-space on the spinor bundle $SM|_{\Sigma_t}$ defined using the volume form $\dt{\mu}_{\Sigma_t}(g_t)$ and the metric $\spann{\cdot,\cdot}_0$. 
 
Using the Gauß formula for $\nabla^{SM}$,
\[
\nabla^{SM}_X\psi=\nabla^{S\Sigma_t}_X\psi-\frac{1}{2}\gamma(\nu)\gamma(W(\cdot))\psi,
\]
where $W$ is the Weingarten map of the Levi-Civita connection on the tangent bundle, we get
\begin{align}\label{splitting1}
	D=-\gamma(\nu)\left[\nabla^{SM}_\nu+iD_t-\frac{n}{2}H_t\right],
\end{align}
where $H_t$ is the mean curvature of $\Sigma_t$ with respect to $\nu$ and
\begin{equation}
D_t=\cancel{D}_t
\quad\text{or}\quad
D_t=\Mat{\cancel{D}_t,0;0,-\cancel{D}_t}
\label{eq:Dt}
\end{equation}
for $n$ even and odd, respectively.
Here $\cancel{D}_t$ is the Riemannian spin Dirac operator on $S\Sigma_t$.
In particular, $D_t$ is a Dirac-type operator whose principal symbol is given by
\[
\sigma_{D_t}(\zeta)=-i\sigma_D(\nu)\sigma_D(\zeta),
\]
for $\zeta\in T_x\Sigma_t$.

\begin{bem}\label{bm:closedDirac}
	The principal symbol $\sigma_{D_t}$ is symmetric with respect to $\spann{\cdot,\cdot}_{SM}$ but  is skew-symmetric with respect to $\spann{\cdot,\cdot}_0$. Furthermore, $D_t$ is formally selfadjoint with respect to the $L^2$-scalar product arising from $\spann{\cdot,\cdot}_0$.

	\medskip
	Since $D_t$ is essentially the Riemannian Dirac operator, one can split $D_t$ along the hypersurface $\partial\Sigma_t\subseteq\Sigma_t$ as
	\begin{align}
		D_t=\sigma_{D_t}(\eta_t^\flat)^{-1}\left(\nabla^{S\Sigma_t}_{\eta_t}+A_t-\frac{n-1}{2}H_t^{\partial\Sigma_t}\right),
		\label{eq:closedDirac}
	\end{align}
	where $\eta_t:=\eta(t,\cdot)$ is the unit normal field $\eta$ to the timelike boundary $\partial M$ restricted to $\Sigma_t$, $H_t^{\partial\Sigma_t}$ is the mean curvature of $\partial\Sigma_t$ inside $\Sigma_t$ with respect to $\eta_t$ and $A_t$ is the double of the Riemannian Dirac operator on $\partial\Sigma_t$. 
	Note, that $\eta_t^\flat$ is taking the role of $\ngm$ in Section~\ref{EllipticBoundary}.
	Since $\partial\Sigma_t$ is a closed manifold, we see that $A_t$ is essentially selfadjoint.
	Thus, the spectrum of $A_t$ is real and discrete.
	Furthermore, all eigenspaces $E(A_t,\lambda)$ of $A_t$ are finite dimensional, consist of smooth sections,  and one has
	\[
	L^2(\partial\Sigma_t,SM|_{\partial\Sigma_t})=\overline{\bigoplus_\lambda E_\lambda(A_t)}^{L^2}.
	\]
	as a Hilbert space sum decomposition.
\end{bem}

\subsection{Conformal change of the metric} \label{conformal}
In order to rewrite the Dirac operator in an analytically manageable form, it will be convenient to consider the conformally equivalent Lorentzian metric $\hat{g}:=N^{-2}g=-\dt{t}^2+N^{-2}g_t$ on $M$. 
For $\hat g$, the $t$-lines are geodesics which is not true in general for the original metric $g$.

For tangent vectors $X$ which are of unit length w.r.t.\ $g$, we write $\hat{X}=N\cdot X$ for the corresponding unit vector w.r.t.\ $\hat{g}$. 
Similarly, we will decorate all other objects corresponding to the new metric with a hat.
Specifically, the spinor bundles for $g$ and $\hat g$ can be canonically identified such that
\begin{enumerate}[label=(\arabic*), labelwidth=!, labelindent=2pt, leftmargin=21pt]
	\item the inner product $\SM{\cdot}{\cdot}$ is unchanged, i.e.\ the identification is an isometry.
	\item $\hat{\gamma}(X)=N^{-1}\gamma(X)$. In particular, $\hat{\gamma}(\hat{\nu})=\gamma(\nu)$ and hence $\spann{\cdot,\cdot}_0$ is also unchanged. 
	\item $\hat{\nabla}^{SM}_X=\nabla^{SM}_X+\frac{N}{2}\left(\gamma(X)\gamma(\nabla N^{-1})-X(N^{-1})\right)$.
	\item $\hat{D}=N^{\frac{n+2}{2}}DN^{-\frac{n}{2}}$.
\end{enumerate}
See \cite{Hijazi} for details.
\begin{bem}
	Recall that $D$ is formally anti-selfadjoint w.r.t.\ the volume measure $ \dt{\mu}_{M}(g)$ for the metric $g$.
	The transformation rule for $\hat D$ is consistent with $\hat D$ being formally anti-selfadjoint w.r.t.\ the volume measure $\dt{\mu}_M(\hat{g})=N^{-n-1}\dt{\mu}_{M}(g)$. 
\end{bem}
In the following we will denote $\hat{ \Sigma}_t$ and $\hat{M}$ for $(\Sigma_t,\hat{ g}_t)$ and $(M,\hat{ g})$, respectively, to emphasize the change of metric on these manifolds.
Using \eqref{splitting1} for $\hat{D}$, we find
\[
\hat{D}=-\hat{\gamma}(\hat{\nu}) \left(\nabla^{S\hat{M}}_{\hat{\nu}}+i\hat{D}_t-\frac{n}{2}\hat{H}_t\right),
\]
where $\hat{D}_t=\hat{\cancel{D}}_t$ for $n$ even and $\hat{D}_t=\Mat{\hat{\cancel{D}}_t,0;0,-\hat{\cancel{D}}_t}$ for $n$ odd. 
Since the Riemannian Dirac operator changes under this conformal change as $\hat{\cancel{D}}_t=N^{\frac{n+1}{2}}\cancel{D}_tN^{-\frac{n-1}{2}}$, we have
\begin{align*}
	\hat{D}_t=N^{\frac{n+1}{2}}D_t N^{-\frac{n-1}{2}}.
\end{align*}
\begin{bem}
	Note that $\hat{D}_t$ is indeed formally selfadjoint and its principal symbol is given as 
	\begin{align}\label{conformalOpPrin}
		\sigma_{\hat{D}_t}(\zeta)=N^{\frac{n+1}{2}}\sigma_{D_t}(\zeta)N^{-\frac{n-1}{2}}=N\sigma_{D_t}(\zeta)
	\end{align}
	for all $x\in\Sigma_t$ and $\zeta\in T^\ast_x\Sigma_t$.
\end{bem}

\subsection{Identifying the Cauchy hypersurfaces}\label{CauchyId}
We will use the foliation of $M$ by Cauchy hypersurfaces to separate spacetime into space and time.
This requires an identification of the Cauchy hypersurfaces which we describe now.

For $t,s\in\mathbb{R}$, let $\tau^s_t\colon S\hat{M}|_{\hat{\Sigma}_t}\to S\hat{M}|_{\hat{\Sigma}_s}$ be the parallel transport (w.r.t.\ $\hat{\nabla}^{SM}$) along the integral curves of $\partial_t$.
By the same symbol we denote the parallel transport $\tau^s_t\colon T\hat{\Sigma}_t\to T\hat{\Sigma}_s$ w.r.t.\ to the Levi-Civita connection of $\hat g$.
We also write $\tau_t:=\tau^0_t$ to simplify the notation.
\begin{bem}
	\begin{enumerate}[label=(\alph*), wide, labelwidth=!, labelindent=0pt]
		\item
		In Subsection~\ref{sec:timelike} we pointed out that it is essential to assume that the Cauchy temporal function has gradient tangential to the boundary to ensure that the parallel transport is globally defined.
		This enables us to identify all Cauchy hypersurfaces with each other.
		\item
		Another implication of $\partial_t$ being tangential to $\partial \hat{M}$ is that the restriction of the parallel transport to the boundary $\tau^s_t|_{\partial \hat{M}}$ also identifies the spinor bundles $S\hat{M}|_{\partial\hat{\Sigma}_t}$ (as well as $T\hat{\Sigma}_t|_{\partial\hat{\Sigma}_t}$) of the  Cauchy hypersurfaces of the globally hyperbolic spacetime $\partial \hat{M}$ with each other.
		\item
		Furthermore, since $\hat{\nu}=\partial_t$ is geodesic, we see that the Clifford multiplication on $S\hat{M}|_{\Sigma_t}$ changes under $\tau_t$ as
		\begin{align*}
		\tau_t(\hat{\gamma}(X)v)
		&=
		\tau_t\big(-i\hat{\gamma}(\hat{\nu})\hat{\gamma}_t(X)v\big)
		=
		-i\hat{\gamma}(\hat{\nu})\tau_t(\hat{\gamma}_t(X)v) \\
		&=
		-i\hat{\gamma}(\hat{\nu})\hat{\gamma}_0(\tau_t X)\tau_t v
		=
		\hat{\gamma}(\tau_t X)\tau_t v
		\end{align*}
		for both $n$ even and odd.
		Furthermore, since $\tau_t$ is an isometry for $\spann{\cdot,\cdot}_{SM}$, we have for $u,v\in S_x\hat{M}$ with $x\in\hat{\Sigma}_t$:
		\[
		\spann{\tau_t u,\tau_t v}_0
		=
		\spann{\hat{\gamma}(\hat{\nu})\tau_t u,\tau_t v}_{SM}
		=
		\spann{\tau_t(\hat{\gamma}(\hat{\nu})u),\tau_t v}_{SM}
		=
		\spann{\hat{\gamma}(\hat{\nu})u,v}_{SM}
		=
		\spann{u,v}_0 .
		\]
		Hence $\tau_t$ is also an isometry for $\spann{\cdot,\cdot}_{0}$.
	\end{enumerate}
\end{bem}

We write $\hat\Sigma = \hat\Sigma_0$ for simplicity and define the \emph{volume distortion function} 
$$
\rho\colon\R\times\hat\Sigma\to\R_+ \quad\text{by}\quad (\tau_0^t)^*\vol_t=\rho(t,\cdot) \cdot \vol_0 .
$$
Here $\vol_t$ denotes the volume form on $\hat\Sigma_t$.
Then the map
\[
U(t):=\rho(t,\cdot)\tau_t\colon L^2(\hat{\Sigma}_t,S\hat{M}|_{\hat{\Sigma}_t})\to L^2(\hat{\Sigma},S\hat{M}|_{\hat{\Sigma}})
\]
is an isometry and $U(0)=\id$.
With the same definition, $U(t)$ also yields an isomorphism $H^s(\hat{\Sigma}_t,S\hat{M}|_{\hat{\Sigma}_t})\to H^s(\hat{\Sigma},S\hat{M}|_{\hat{\Sigma}})$ between the Sobolev spaces for any $s\in\mathbb{R}$.
The family $\{H^s(\hat{\Sigma}_t, S\hat{M}|_{\hat{\Sigma}_t})\}_{t\in\mathbb{R}}$ can be considered as a bundle of Hilbert spaces over $\mathbb{R}$ that is trivialized by $U(t)$.
Let $C^k(\mathbb{R},H^s(\hat{\Sigma}_t, S\hat{M}|_{\hat{\Sigma}_t}))$ be the space of $C^k$-sections of this bundle. 
This space carries a natural topology induced by the semi-norms
\begin{equation}
\psi \mapsto \max_{t\in I}  \Big(\norm{(U\psi)(t)}_{H^s} + \ldots + \norm{\big(\tfrac{\dt}{\dt{t}}\big)^k(U\psi)(t)}_{H^s}\Big)
\label{eq:seminorms}
\end{equation}
where $I\subset\mathbb{R}$ runs through all compact intervals.
Here we wrote $(U\psi)(t)=U(t)\psi(t)$.

\begin{bem}
	Since $\nu$ is tangential to $\partial \hat{M}$, restriction to the boundary gives us isomorphisms
	\[
	U(t)|_{\partial\hat{\Sigma}_t}\colon H^s(\partial\hat{\Sigma}_t,S\hat{M}|_{\partial\hat{\Sigma}_t}))\to H^s(\partial\hat{\Sigma}_0,S\hat{M}|_{\partial\hat{\Sigma}_0}) .
	\]
\end{bem}
The following lemma, which is based on computations in \cite{vdD} and proven, for example, in Lemma~2.1 in \cite{SM} or in Section~4.1.2 in \cite{DGM}, reduces $\hat{D}$ to Hamiltonian form.
\begin{lem} \label{lem:ident}
	The operator $\hat{D}$ satisfies
	\[
	\hat{D}=-\hat{\gamma}(\hat{\nu}) U(t)^{-1}(\partial_t+i\tilde{D}_t)U(t),
	\]
	where $\tilde{D}_t:=U(t)\hat{D}_t U(t)^{-1}$.
	\hfill\qed
\end{lem}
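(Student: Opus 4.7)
The plan is to reduce the claim to the fiberwise splitting
\[
\hat D \;=\; -\hat\gamma(\hat\nu)\Big(\nabla^{S\hat M}_{\hat\nu}+i\hat D_t -\tfrac{n}{2}\hat H_t\Big)
\]
of $\hat D$ along the Cauchy foliation, which was already established in Subsection~\ref{conformal}, and to identify the conjugated time derivative as
\[
U(t)^{-1}\,\partial_t\,U(t) \;=\; \nabla^{S\hat M}_{\hat\nu} -\tfrac{n}{2}\hat H_t
\]
acting on time-dependent sections of $S\hat M|_{\hat\Sigma_t}$. Once this identification is in hand, the remaining steps are purely formal: $U(t)^{-1}\,i\tilde D_t\,U(t)=i\hat D_t$ is just the definition of $\tilde D_t$, while $\hat\gamma(\hat\nu)$ commutes with $U(t)$ because $\hat\nu=\partial_t$ is $\hat\nabla$-parallel along itself (its integral curves are $\hat g$-geodesics), so $\hat\gamma(\hat\nu)$ is intertwined by the parallel transport $\tau_t$ and obviously commutes with the scalar factor $\rho$. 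Factoring $-\hat\gamma(\hat\nu)$ out of the parenthesis then yields the stated formula.

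For the central identity, I apply the Leibniz rule to $U(t)\psi(t)=\rho(t,\cdot)\,\tau_t\psi(t)$. Since $\tau_t$ is parallel transport along the flow of $\partial_t=\hat\nu$, the defining ODE of parallel transport gives
\[
\partial_t\big(\tau_t\psi(t)\big) \;=\; \tau_t\big(\nabla^{S\hat M}_{\hat\nu}\psi(t)\big).
\]
Combining this with the Leibniz rule applied to the scalar factor $\rho$ and inverting $U(t)$ produces
\[
U(t)^{-1}\,\partial_t\,U(t)\,\psi \;=\; \nabla^{S\hat M}_{\hat\nu}\psi \;+\; \frac{\partial_t\rho}{\rho}\,\psi,
\]
so the whole statement reduces to verifying $\partial_t\rho/\rho = -\tfrac{n}{2}\hat H_t$.

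This last identity is the first-variation-of-volume formula along the normal geodesic flow. By construction $\rho$ is the Jacobian of the flow of $\hat\nu$, normalized so as to make $U(t)$ an $L^2$-isometry (i.e.\ the square root of the ordinary volume Jacobian, a half-density). Its logarithmic $t$-derivative is therefore one half of the trace of the shape operator of $\hat\Sigma_t$ with respect to $\hat\nu$, which is exactly $-\tfrac{n}{2}\hat H_t$ once one fixes the sign conventions. I expect this to be the only delicate step: one has to carefully reconcile the past-pointing convention of $\hat\nu$ with the sign convention defining $\hat H_t$ in Subsection~\ref{SubS:Dirac} and with the normalization of $\rho$ used in Subsection~\ref{CauchyId}, so that the constants $-\tfrac{n}{2}$ and the sign of the mean curvature match. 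Once the conventions are aligned, everything else is bookkeeping and the lemma follows.
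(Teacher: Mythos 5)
Your proposal is correct and follows essentially the argument that the paper does not spell out but delegates to the references (\cite{vdD}, Section~4.1.2 of \cite{DGM}): conjugate the splitting \eqref{splitting1} for $\hat D$ by $U(t)$, use that parallel transport along the integral curves of $\partial_t=\hat\nu$ converts $\partial_t$ into $\nabla^{S\hat M}_{\hat\nu}$, identify $U(t)^{-1}\tilde D_tU(t)=\hat D_t$ by definition, and absorb the mean-curvature term into the logarithmic $t$-derivative of the volume-distortion factor via the first variation of volume. Two remarks: your reading of the weight as the square root of the Jacobian of the flow of $\hat\nu$ is exactly what is needed for $U(t)$ to be an $L^2$-isometry (taken literally, pulling back $\vol_t$ through the linear isometry $\tau_0^t$ would give a distortion factor identically $1$), so your normalization is the intended one; and the one step you defer, $\partial_t\log\rho^2=-n\hat H_t$, is the only substantive computation remaining --- it is routine, but the signs must be checked against the Weingarten/mean-curvature convention fixed by the Gauss formula preceding \eqref{splitting1}, since the paper's simultaneous conventions that $\nu$ is past-pointing and that $\hat\nu=\partial_t$ require care at precisely this point. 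This deferred bookkeeping is not a gap in the method; carried out with consistent conventions it completes the proof of Lemma~\ref{lem:ident}.
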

The operator $\tilde{{D}}_t$ has the principal symbol
\begin{align}\label{identOpPrin}
	\sigma_{\tilde{{D}}_t}(\zeta)=\tau_t\circ \sigma_{\hat{{D}_t}}(\tau^{-1}_t\zeta)\circ \tau^{-1}_t
\end{align}
which is skew symmetric and satisfies the Clifford relations on $S\hat{M}|_{\hat{\Sigma}}$.
Since $U$ is an isometry on the $L^2$-spaces, $\tilde{{D}}_t$ is again formally selfadjoint.
Hence, $\tilde{D}_t$ is again a formally selfadjoint Dirac-type operator.
Moreover, the coefficients of $\tilde{D}_t$ depend smoothly on $t$.

\subsection{Standard setup} \label{Standard}
Unless stated otherwise, we will, from now on, assume to be in the following setting:
\begin{enumerate}[label=(S\arabic*), labelwidth=!, labelindent=2pt, leftmargin=21pt]
	\item\label{ssfirst}
	$(M,g)$ is a globally hyperbolic spatially compact spin manifold with timelike boundary $\partial M$.
	The inward pointing unit normal field to $\partial M$ is denoted by $\eta$.
	\item 
	$T\colon M\to \R$ is a Cauchy temporal function whose gradient is tangential to $\partial M$.
	The induced lapse function, Cauchy hypersurfaces and pastdirected unit normal field are denoted by $N$, $\Sigma_t$ and $\nu$, respectively.
	\item 
	$D\colon C^\infty(M,SM)\to C^\infty(M,SM)$ is the Lorentzian spin Dirac operator.
	\item
	$D_t\colon C^\infty(\Sigma_t,SM|_{\Sigma_t}) \to C^\infty(\Sigma_t,SM|_{\Sigma_t})$ is the induced Dirac operator on $\Sigma_t$ given by \eqref{eq:Dt}.
	\item
	$\tilde{D}_t\colon C^\infty(\Sigma_0,SM|_{\Sigma_0}) \to C^\infty(\Sigma_0,SM|_{\Sigma_0})$ is the induced Dirac-type operator described in Lemma~\ref{lem:ident}.
	\item
	$A_t\colon C^\infty(\partial\Sigma_t,SM|_{\partial\Sigma_t}) \to C^\infty(\partial\Sigma_t,SM|_{\partial\Sigma_t})$ is the boundary operator for $D_t$ given in \eqref{eq:closedDirac}.
	\item\label{sslast}
	$\tilde{A}_t\colon C^\infty(\partial\Sigma_0,SM|_{\partial\Sigma_0}) \to C^\infty(\partial\Sigma_0,SM|_{\partial\Sigma_0})$ is a boundary operator for $\tilde{D}_t$.
	It is assumed to be formally selfadjoint, to anticommute with $\sigma_{\tilde{{D}}_t}(\hat\eta(0)^\flat)$ and to have coefficients depending smoothly on $t$.
\end{enumerate}
Using \eqref{conformalOpPrin} and \eqref{identOpPrin}, we compute the principal symbol of $\tilde{A}_t$:
\begin{align*}
	\sigma_ {\tilde{A}_t}(\zeta)
	&=
	\sigma_{\tilde{D}_t}(\hat{\eta}(0)^\flat)^{-1}\sigma_{\tilde{D}_t}(\zeta) \\
	&=
	(\tau_t \sigma_{\hat{D}_t}(\tau_t^{-1}\hat{\eta}(0)^\flat)\tau_t^{-1})^{-1}(\tau_t \sigma_{\hat{D}_t}(\tau_t^{-1}\zeta)\tau_t^{-1}) \\
	&=
	\tau_t \sigma_{D_t}(\tau_t^{-1}\hat{\eta}(0)^\flat)^{-1}\sigma_{D_t}(\tau_t^{-1}\zeta)\tau_t^{-1} .
\end{align*}
\begin{bem}\label{boprincipalsymbol}
If we assume additionally that $N|_{\partial M}$ only depends on $t$ and that $\eta$ is parallel along the integral curves of $\partial_t$, then
$$
\tau_t^{-1}\hat{\eta}(0)^\flat =N(0)^{-1} \tau_t^{-1}\eta(0)^\flat =N(0)^{-1} \eta(t)^\flat
$$
and hence
	\[
	\sigma_{\tilde{A}_t}(\zeta)
	= N(0)
	\tau_t\sigma_{D_t}(\eta(t)^\flat)^{-1}\sigma_{D_t}(\tau_t^{-1}\zeta)\tau_t^{-1}
	= N(0)
	\tau_t\sigma_{A_t}(\tau_t^{-1}\zeta)\tau_t^{-1}.
	\]
	Thus in this case we can make the choice $\tilde{A}_t=N(0)U(t)A_tU(t)^{-1}$.
\end{bem}

\subsection{Admissible boundary conditions}\label{BoundaryConditions}
We now apply the results of Section~\ref{ConFunCal} to the family of elliptic formally selfadjoint Dirac-type operators $\tilde{D}_t$.

\begin{defi}\label{admissibleBound}
	Suppose we are in the Standard Setup~\ref{ssfirst}--\ref{sslast}.
	We call a family $ B=\{B_t\}_{t\in\mathbb{R}}$ of linear subspaces $B_t\subseteq H^{\frac 12}(\Sigma_t,SM|_{\Sigma_t})$ an \emph{admissible boundary condition} for $D$ (w.r.t.\ the temporal function $T$) if
	\begin{enumerate}[label=(\arabic*), labelwidth=!, labelindent=2pt, leftmargin=21pt]
		\item \label{admissibleBound1}
		$N^{\nicefrac 12}(t,\cdot)B_t = \{N^{\nicefrac 12}(t,\cdot)u; u\in B_t\}\subset H^{\frac 12}(\Sigma_t,SM|_{\Sigma_t})$ is a selfadjoint boundary condition for $D_t$ for every $t\in\mathbb{R}$;
		\item \label{admissibleBound2}
		$\tilde{B}_t:=U(t)N^{\frac{ n}{2}}(t,\cdot)B_t\subseteq H^{\frac 12}(\Sigma_0,SM|_{\Sigma_0})$ is a selfadjoint $\infty$-regular boundary condition for $\tilde{D}_t$ for every $t\in\mathbb{R}$;
		\item \label{admissibleBound3}
		$t\mapsto\tilde{D}_{t,\tilde{B}_t}J_{\tilde{B}_t}^{(\varepsilon)}$ is strongly continuous with respect to $\norm{\cdot}_{H^k}$ for every $k\in \mathbb{N}$.
	\end{enumerate}
\end{defi}
Note that the definition depends crucially on the choice of temporal function $T$.

\medskip
Let us discuss a simple example for illustrating Definition \ref{admissibleBound} and see that indeed such boundary conditions exist:
\begin{bsp} \label{Lor:trans}
	Let us discuss the Lorentzian equivalent of the transmission conditions we considered in Example~\ref{bsp:trans}.
	We start with a global hyperbolic spin manifold $(M,g)$ with closed Cauchy hypersurfaces.
	Let $\mathcal{M}\subseteq M$ be a timelike hypersurface with trivial normal bundle.
	Cut $M$ along $\mathcal{M}$ to obtain a globally hyperbolic spacetime $M'$ with timelike boundary $\partial M'=\mathcal{M}_1\sqcup \mathcal{M}_2$.
	We can then choose a temporal function $T\colon M' \to\mathbb{R}$ with gradient tangential to $\mathcal{M}$.
	This implies that $\partial\Sigma'_t=(\mathcal{M}_1)_t\sqcup (\mathcal{M}_2)_t$ with $(\mathcal{M}_1)_t$ and $(\mathcal{M}_2)_t$ being canonically diffeomorphic to $\mathcal{M}_t$ .
	This allows us to consider a family $B=\{B_t\}_{t\in \mathbb{R}}$ with
	\begin{align*}
	B_t:=\big\{(\psi,\psi)\in H^{\frac 12}((\mathcal{M}_1)_t, SM'|_{(\mathcal{M}_1)_t})\oplus H^{\frac 12}((\mathcal{M}_2)_t,& SM'|_{(\mathcal{M}_2)_t});\\
    & \psi\in H^{\frac 12}(\mathcal{M}_t,SM|_{\mathcal{M}_t})\big\}
	\end{align*}
	being a transmission condition as defined in Example~\ref{bsp:trans}.
	Hence, $B_t$ is a selfadjoint and $\infty$-regular boundary condition for $D_t$.
	Furthermore, for any $k\in\mathbb{R}$,
	\begin{align}\label{equiv:trans}
	N^{\nicefrac k2}(t)B_t=\{N^{\nicefrac k2}(t,\cdot)u; u\in B_t\}=B_t
	\end{align}
	since $N^{\nicefrac{k}{2}}(t,\cdot)H^\frac 12=H^\frac 12$. 
	Hence, $B_t=N^{\nicefrac 12}(t,\cdot)B_t$ is a selfadjoint boundary condition for $D_t$ as well, which implies Assumption~\ref{admissibleBound1} in Definition~\ref{admissibleBound}.
	Using again \eqref{equiv:trans}, we can rewrite $\tilde{B}_t$ by
	\begin{align*}
	\tilde{B}_t&=U(t)N^{\nicefrac n 2}(t,\cdot)B_t  \\
	&=U(t)B_t \\
	&=\big\{(U(t)\psi,U(t)\psi)\in H^{\frac 12}((\mathcal{M}_1)_0, SM'|_{(\mathcal{M}_1)_0})\oplus H^{\frac 12}((\mathcal{M}_2)_0, SM'|_{(\mathcal{M}_2)_0});\\
    & \quad\quad \psi\in H^{\frac 12}(\mathcal{M}_t,SM|_{\mathcal{M}_t})\big\} \\
	&=B_0
	\end{align*}
	since  $U(t):H^{\frac12}(\Sigma'_t)\to H^{\frac 12}(\Sigma'_0)$ is an isomorphism.
	Hence,  $\tilde{B}_t=B_0$ is a selfadjoint $\infty$-regular boundary condition for $\tilde{D}_t$, which implies Assumption~\ref{admissibleBound2} in Defintion~\ref{admissibleBound}.
	Assumption~\ref{admissibleBound3} in Definition~\ref{admissibleBound} follows from the fact that $D_t$ has smooth coefficients and $\tilde{B}_t=B_0$ is independent of $t$.

\end{bsp}

Given an admissible boundary condition $B$, we have the corresponding \emph{Lorentzian boundary condition}
\[
C^\infty(\partial M,B):=\{\psi\in C^\infty(\partial M, SM|_{\partial M}); \psi|_{\partial\Sigma_t}\in B_t ~\text{for every }t\in\mathbb{R}\}.
\]

\begin{lem} \label{selfadjoint}
	Assume the Standard Setup~\ref{ssfirst}--\ref{sslast} and let $B$ be an admissible boundary condition.
	Then
	\[
	\int_{M} \spann{D\psi,\phi}_{SM}+\spann{\psi,D\phi}_{SM}\dt{\mu}_M=0
	\]
	for all $\psi,\phi\in C^\infty_\c(M,SM)$ with $\psi|_{\partial M},\phi|_{\partial M}\in C^\infty(\partial M,B)$.
\end{lem}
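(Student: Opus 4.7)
The plan is to combine the Lorentzian Green's formula \eqref{Green} with the Riemannian Green's formula \eqref{RiemGreen} applied slicewise on each $\partial\Sigma_t$, exploiting assumption \ref{admissibleBound1} on admissibility. First I would apply \eqref{Green} to $\psi,\phi\in C^\infty_\c(M,SM)$ to reduce the claim to showing
\[
\int_{\partial M}\SM{\gamma(\eta)\psi}{\phi}\,\dt{\mu}_{\partial M}=0.
\]
Since $\partial_t$ is tangent to $\partial M$, the induced Lorentzian metric on $\partial M$ is $-N^2\dt{t}^2+g_t|_{\partial\Sigma_t}$, which gives the Fubini-type factorization $\dt{\mu}_{\partial M}=N\,\dt{t}\,\dt{\mu}_{\partial\Sigma_t}$. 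Thus it suffices to prove
\[
\int_{\partial\Sigma_t}N\cdot\SM{\gamma(\eta)\psi}{\phi}\,\dt{\mu}_{\partial\Sigma_t}=0
\]
for every $t\in\R$.

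The key step is a pointwise identification of the integrand with the canonical boundary pairing used in \eqref{RiemGreen} for the spatial operator $D_t$. Using $\spann{\cdot,\cdot}_0=\SM{\gamma(\nu)\cdot}{\cdot}$ and the expression $\sigma_{D_t}(\eta^\flat)=-i\gamma(\nu)\gamma(\eta)$ coming from \eqref{splitting1} (or its odd-$n$ analogue), one computes
\[
\spann{\sigma_{D_t}(\eta^\flat)u,v}_0 = i\SM{\gamma(\eta)u}{v}
\]
for $u,v\in SM|_{\partial\Sigma_t}$. Since $\spann{\cdot,\cdot}_0$ is bilinear over functions, substituting $u=N^{\nicefrac 12}\psi$, $v=N^{\nicefrac 12}\phi$ gives
\[
\spann{\sigma_{D_t}(\eta^\flat)N^{\nicefrac 12}\psi,N^{\nicefrac 12}\phi}_0=iN\cdot\SM{\gamma(\eta)\psi}{\phi}.
\]
Hence the slice integral we want to make vanish equals, up to the factor $-i$, the Riemannian boundary integral
\[
\int_{\partial\Sigma_t}\spann{\sigma_{D_t}(\eta^\flat)\,(N^{\nicefrac 12}\psi),\,N^{\nicefrac 12}\phi}_0\,\dt{\mu}_{\partial\Sigma_t}.
\]

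Now I would invoke assumption \ref{admissibleBound1}: $N^{\nicefrac 12}(t,\cdot)B_t$ is a selfadjoint boundary condition for $D_t$. Since $\psi|_{\partial\Sigma_t},\phi|_{\partial\Sigma_t}\in B_t$, we have $N^{\nicefrac 12}\psi|_{\partial\Sigma_t},N^{\nicefrac 12}\phi|_{\partial\Sigma_t}\in N^{\nicefrac 12}(t,\cdot)B_t$, and the definition of a selfadjoint boundary condition together with \eqref{RiemGreen} applied to $D_t$ forces this integral to vanish. Integrating the vanishing slice integral against $\dt{t}$ and inserting the result into \eqref{Green} concludes the proof. The main bookkeeping obstacle is the correct matching of measure factors: relating $\dt{\mu}_{\partial M}$ to $N\,\dt{t}\,\dt{\mu}_{\partial\Sigma_t}$ and tracking the precise $N^{\nicefrac 12}$ twist built into \ref{admissibleBound1}; once these are aligned, the result follows from the two Green's formulas with no analysis required.
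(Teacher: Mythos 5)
Your proposal is correct and follows essentially the same route as the paper: apply \eqref{Green}, factor the boundary integral via Fubini as $N\,\dt{t}\,\dt{\mu}_{\partial\Sigma_t}$, rewrite the slice integrand as the pairing $\spann{\sigma_{D_t}(\eta_t^\flat)N^{\nicefrac12}\psi,N^{\nicefrac12}\phi}_0$ from \eqref{RiemGreen}, and kill it using selfadjointness of $N^{\nicefrac12}(t,\cdot)B_t$ from condition~\ref{admissibleBound1}. (Your constant $+i$ versus the paper's $-i$ is a harmless convention-dependent sign that does not affect the vanishing.)
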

\begin{proof}
	Using Fubini's Theorem, we can compute the boundary term in \eqref{Green} as follows:
	\begin{align*}
		\int_{\partial M}\spann{\gamma(\eta)\psi,\phi}_{SM}\dt{\mu}_{\partial M}
		&=
		\int_\R \int_{\partial\Sigma_t} \spann{\gamma(\eta_t)\psi,\phi}_{SM}N(t)\dt{\mu}_{\partial\Sigma_t}\dt{t} \\
		&=
		-i\int_{\mathbb{R}}\underbrace{\int_{\partial\Sigma_t}\spann{\sigma_{D_t}(\eta_t^\flat)N^{\nicefrac 1 2}(t)\psi,N^{\nicefrac 12}(t)\phi}_0\dt{\mu}_{\partial\Sigma_t}}_{=:(\ast_t)}\dt{t}  \\
		&=0,
	\end{align*}
where we used in the last line that $\eta_t^\flat$ is the conormal to $\partial\Sigma_t$ and thus $(\ast_t)$
 is taking the role of the RHS of \eqref{RiemGreen} with $\ngm=\eta_t^\flat$.
Since $N^{\nicefrac 12}\psi|_{\partial\Sigma_t},N^{\nicefrac 12}\phi|_{\partial\Sigma_t}\in N^{\nicefrac 12}(t,\cdot)B_t$, expression $(\ast_t)$  vanishes for all $t$ by Condition~\ref{admissibleBound1} in Definition~\ref{admissibleBound}.
\end{proof}

We will often use the norm equivalence of $\norm{\cdot}_{\tilde{B}_t,k}$ and $\norm{\cdot}_{H^k}$. 
We define for $k\in\mathbb{N}$ and $s>\frac12$
\[
C^k(\mathbb{R},H^s_{\tilde{B}_\bullet}(\hat{\Sigma},S\hat{M}|_{\hat{\Sigma}}))
:=
\{\psi\in C^k(\mathbb{R},H^s(\hat{\Sigma},S\hat{M}|_{\hat{\Sigma}}));~ \psi(t)\in H^s_{\tilde{B}_t}(\hat{\Sigma},S\hat{M}|_{\hat{\Sigma}})\}
\]
equipped with the subspace topology.
Thus this topology is given by the same seminorms as in \eqref{eq:seminorms}.
Furthermore,  $C^k(I,H^s_{\tilde{B}_\bullet}(\hat{\Sigma},S\hat{M}|_{\hat{\Sigma}}))$ is a Banach space for any compact interval $I$.
In this case, we will denote the norms by
\[
\norm{\psi}_{k,I,H^s}
=
\max_{t\in I}  \Big(\norm{(U\psi)(t)}_{H^s} + \ldots + \norm{\big(\tfrac{\dt}{\dt{t}}\big)^k(U\psi)(t)}_{H^s}\Big) .
\]
The condition $s>\frac12$ is needed for $\psi|_{\partial M}$ to make sense.

\section{Initial Boundary Value Problems}
\label{sec:IBVP}

We now discuss the Cauchy problem under boundary conditions as described in Definition~\ref{admissibleBound}. 
Throughout this section, we will assume the Standard Setup~\ref{ssfirst}--\ref{sslast}.

\medskip
Let  $B=\{B_t\}_{t\in\mathbb{R}}$ be an admissible boundary condition as described in Definition~\ref{admissibleBound}. 
Consider the following Cauchy problem:
\begin{align}\label{CP:1}
	\begin{cases}
		D\psi=f\in C^\infty_{\cc}(M,SM), &\\
		\psi|_{\Sigma_0}=\psi_0\in C^\infty_{\cc}(\Sigma_0,SM|_{\Sigma_0}), & \\
		\psi|_{\partial M}\in C^\infty(\partial M,B). & 
	\end{cases}
\end{align}
Here $f$ and $\psi_0$ are given and $\psi$ is searched for.
The main result of this section is the well-posedness of this Cauchy problem:

\begin{satz}\label{main}
Assume the Standard Setup~\ref{ssfirst}--\ref{sslast} and let $B$ be an admissible boundary condition.

Then, given $f\in C^\infty_{\cc}(M,SM)$ and $\psi_0\in C^\infty_{\cc}(\Sigma_0,SM|_{\Sigma_0})$, there exists a unique smooth solution $\psi\in C^\infty(M,SM)$ to the Cauchy problem~\eqref{CP:1}.
This solution depends continuously on the Cauchy data $(f,\psi_0)$.
\end{satz}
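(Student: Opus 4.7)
The plan is to pull back the Cauchy problem to a Hamiltonian-type first-order evolution on a single Hilbert space over $\hat\Sigma = \hat\Sigma_0$ and then attack it by a Friedrichs-style mollification combined with energy estimates. Combining the conformal rule $\hat D = N^{(n+2)/2} D N^{-n/2}$ with Lemma~\ref{lem:ident}, the PDE $D\psi = f$ is equivalent to
\[
\partial_t\Phi(t) + i\,\tilde D_t\Phi(t) = F(t)
\]
on $L^2(\hat\Sigma,S\hat M|_{\hat\Sigma})$, where $\Phi(t) := U(t)N^{n/2}(t,\cdot)\psi(t)$ and $F(t)$ is the analogous transform of $f(t)$ (with an extra factor $-U(t)\hat\gamma(\hat\nu)^{-1}N(t,\cdot)$). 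The initial condition becomes $\Phi(0)=N^{n/2}(0,\cdot)\psi_0$, and by Definition~\ref{admissibleBound}~\ref{admissibleBound2} the requirement $\psi|_{\partial M}\in C^\infty(\partial M,B)$ translates to $\Phi(t)|_{\partial\hat\Sigma}\in \tilde B_t$ for every $t$. All admissibility data ensure that $\tilde D_{t,\tilde B_t}$ is selfadjoint on $L^2$, so $i\tilde D_{t,\tilde B_t}$ is skew-adjoint.

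Uniqueness then follows immediately: differentiating $\|\Phi(t)\|_{L^2}^2$ and using skew-adjointness (which encodes the Green identity of Lemma~\ref{selfadjoint} after undoing Step~1) gives
\[
\tfrac{d}{dt}\|\Phi(t)\|_{L^2}^2 = 2\,\Rea\langle F(t),\Phi(t)\rangle_{L^2},
\]
so Gr\"onwall yields a linear $L^2$ bound in the data. For existence I mollify: for each $\varepsilon>0$, solve
\[
\partial_t\Phi^\varepsilon + i\,J^{(\varepsilon)}_{\tilde B_t}\tilde D_{t,\tilde B_t}J^{(\varepsilon)}_{\tilde B_t}\Phi^\varepsilon = J^{(\varepsilon)}_{\tilde B_t}F,\qquad \Phi^\varepsilon(0)=J^{(\varepsilon)}_{\tilde B_0}\Phi_0.
\]
By Definition~\ref{admissibleBound}~\ref{admissibleBound3} the coefficient is strongly continuous in $t$, bounded and skew-adjoint on $L^2$, so Picard iteration yields a unique $\Phi^\varepsilon\in C^1(\mathbb{R},H^\infty_{\tilde B_\bullet})$. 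Skew-adjointness then gives an $L^2$ energy identity for $\Phi^\varepsilon$ uniform in $\varepsilon$; weak-$*$ compactness produces a limit $\Phi$ solving the unmollified equation distributionally, and the uniqueness step upgrades this to strong convergence in $C_{\mathrm{loc}}(\mathbb{R},L^2)$.

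Higher regularity is obtained by running the same estimate in the graph norm of $\tilde D_{t,\tilde B_t}^k$: the only new terms are commutators involving $[\partial_t,\tilde D_t]$, which are first-order operators with smoothly $t$-dependent coefficients and are absorbable by Gr\"onwall. The norm equivalence $\|\cdot\|_{\tilde B_t,k}\simeq\|\cdot\|_{H^k}$ of Lemma~\ref{Sobolev:lem}~\ref{closedc} then yields $\Phi\in C(\mathbb{R},H^k_{\tilde B_\bullet})$ for every $k$; time derivatives are recovered iteratively from $\partial_t\Phi = F - i\tilde D_t\Phi$, and Sobolev embedding promotes the result to $\Phi\in C^\infty$, hence $\psi\in C^\infty(M,SM)$. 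Continuous dependence on $(f,\psi_0)$ is a byproduct of the linear estimates used throughout.

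The main obstacle is the higher-regularity step. Since $\tilde B_t$ genuinely depends on $t$, the time derivative $\partial_t\Phi^\varepsilon$ does \emph{not} lie in $\tilde B_t$, so one cannot simply differentiate in $t$ and apply a graph-norm estimate; time derivatives must be converted to spatial ones through the equation itself. The continuity condition Definition~\ref{admissibleBound}~\ref{admissibleBound3}, together with the quantitative Lemma~\ref{HkContinuity}, is precisely what is needed to keep the mixed commutators $[\partial_t,J^{(\varepsilon)}_{\tilde B_t}]$ and $[\partial_t,\tilde D_{t,\tilde B_t}J^{(\varepsilon)}_{\tilde B_t}]$ under uniform control as $\varepsilon\to 0$, and this is where the bulk of the technical work will lie.
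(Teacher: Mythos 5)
Your skeleton is the paper's proof: reduction to the Hamiltonian form of Lemma~\ref{lem:ident}, regularization by the functional-calculus smoothing operators (your symmetric $J^{(\varepsilon)}_{\tilde B_t}\tilde D_{t,\tilde B_t}J^{(\varepsilon)}_{\tilde B_t}$ equals $\tilde D_{t,\tilde B_t}J^{(2\varepsilon)}_{\tilde B_t}$ because the two factors commute, and mollifying the data and source is harmless but unnecessary since $\psi_0,f$ are supported away from $\partial M$), solvability of the regularized problem from the strong continuity in Definition~\ref{admissibleBound}~\ref{admissibleBound3}, graph-norm energy estimates in which the commutators $[\partial_t,\tilde D^l_t]$ drop one order and are absorbed by Gr\"onwall, bootstrap of regularity through the equation, and uniqueness via the $L^2$ Gr\"onwall estimate (Proposition~\ref{EE}). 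The only genuinely different ingredient is continuous dependence, which the paper obtains softly from the open mapping theorem for Fr\'echet spaces, while your quantitative route through the estimates is viable but left unassembled.

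The step that does not work as written is the limit $\varepsilon\to 0$. From the $L^2$ energy identity alone you get a weak-$*$ limit in $L^\infty_{\mathrm{loc}}(\mathbb{R},L^2)$; at that regularity the limit is not known to lie in $\dom(\tilde D_{t,\tilde B_t})$, so it neither satisfies the boundary condition in any usable sense nor is covered by the uniqueness statement, which is proved only for solutions regular enough for the Green formula -- hence ``the uniqueness step upgrades this to strong convergence in $C_{\mathrm{loc}}(\mathbb{R},L^2)$'' is unjustified, and the plan to then ``run the same estimate in the graph norm'' on the limit is circular, since those estimates presuppose the regularity you are trying to establish. The fix is the paper's ordering: run the graph-norm estimates of \emph{all} orders on the mollified solutions $\Phi^\varepsilon$ uniformly in $\varepsilon$, obtain equicontinuity in $t$ from the equation (using that $J^{(\varepsilon)}_{\tilde B_t}$ is a contraction on $H^{k+1}_{\tilde B_t}$), pass to a limit strongly in $C^0_{\mathrm{loc}}(\mathbb{R},H^k)$ for every $k$ by Arzela--Ascoli and a diagonal argument; then $\Phi(t)\in H^k_{\tilde B_t}$ because $H^k_{\tilde B_t}$ is closed in $H^k$ (Lemma~\ref{Sobolev:lem}), and $\Phi$ solves \eqref{CP:3} by passing to the limit in the \emph{integrated} equation with dominated convergence, not in the differentiated one. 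Note also that the difficulty you flag at the end never arises: one differentiates only the scalar functions $t\mapsto\norm{\tilde D^l_{t,\tilde B_t}\Phi^\varepsilon(t)}^2_{L^2}$ and substitutes the equation, so neither $J^{(\varepsilon)}_{\tilde B_t}$ nor the time-dependent domain is ever differentiated in $t$; the continuity hypothesis of Definition~\ref{admissibleBound}~\ref{admissibleBound3} (via Lemma~\ref{HkContinuity}) is needed only to solve the regularized evolution equation.
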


Uniqueness will follow from an $L^2$-estimate which we derive in Subsection~\ref{sec:uni}.
Existence will be proved by regularizing the problem and passing to a limit using the Arzela-Ascoli Theorem in Subsection~\ref{sec:exist}.
In Subsection~\ref{sec:cont} we will show continuous dependence on the Cauchy data.

Before we start, we apply the reduction we did in Section~\ref{CauchyId}.
Then $\psi\in C^\infty(M,SM)$ is a solution to the Cauchy problem~\eqref{CP:1} if and only if $\tilde{\psi}=UN^{\frac n 2}\psi$ is a solution to  the following Cauchy problem
\begin{align}\label{CP:2}
	\begin{cases}
		\tilde{D}\tilde\psi=\tilde{f}\in C^\infty_\c(\mathbb{R},C^\infty_{\cc}(\hat{\Sigma}_0,S\hat{M}|_{\hat{\Sigma}_0})), & \\
		\tilde{\psi}(0)=N^{\frac n 2}(0) \psi_0\in C^\infty_{\cc}(\hat{\Sigma_0},S\hat{M}|_{\hat{\Sigma}_0}),& \\
		\tilde{\psi}|_{\partial \hat{\Sigma}_0}\in C^\infty(\mathbb{R},\tilde{B}),
	\end{cases}
\end{align}
where $\tilde{f}=UN^{\frac{n+2}{2}}f$ and 
\[
C^\infty(\mathbb{R},\tilde{B}):=\{\psi\in C^\infty(\mathbb{R},C^\infty(\partial\hat{\Sigma}_0,S\hat{M}|_{\partial\hat{\Sigma}_0}));~ \psi(t)\in\tilde{B}_t~\forall \,t\in\mathbb{R}\}.
\]
Furthermore, note that $\tilde{\psi}$ is a solution to the Cauchy problem~\eqref{CP:2} if and only if it solves
\begin{align}\label{CP:3}
	\begin{cases}
		-\hat{\gamma}(\hat{\nu})\tilde{D}=(\partial_t+i\tilde{D}_t)\tilde{\psi}=-\hat{\gamma}(\hat{\nu}) \tilde{f}=:\check{f}, & \\
		\tilde{\psi}(0)=N^{\frac n 2}(0)\psi_0, & \\
		\tilde{\psi}|_{\partial \hat{ \Sigma}_0}\in C^\infty(\mathbb{R},\tilde{B}).&
	\end{cases}
\end{align}

\subsection{Energy estimate and uniqueness} \label{sec:uni}

Uniqueness will follow from an $L^2$-energy estimate. 
For $t_0,t_1\in\mathbb{R}$ we will write $M_{[t_0,t_1]}:=T^{-1}([t_0,t_1])$ and $(\partial M)_{[t_0,t_1]}:=M_{[t_0,t_1]}\cap \partial M$. 
\begin{prop}\label{EE} 
Assume the Standard Setup~\ref{ssfirst}--\ref{sslast} and let $B$ be an admissible boundary condition. 
Then for all $t_0,t_1\in\mathbb{R}$ with $t_1>t_0$, there exists a constant $C=C[t_0,t_1]$ such that
	\begin{align*}
		\int_{\Sigma_{t_1}}\abs{\psi}^2_0\dt{\mu}_{\Sigma_{t_1}}\leq e^{C(t_1-t_0)}\left[C\int_{t_0}^{t_1}\int_{\Sigma_{s}}\abs{D\psi}^2_0\dt{\mu}_{\Sigma_s}\dt{s}+\int_{\Sigma_{t_0}}\abs{\psi}^2_0 \dt{\mu}_{\Sigma_{t_0}}\right]
	\end{align*}
	for all $\psi\in C^\infty(M,SM)$ satifying $\psi|_{\partial M}\in C^\infty(\partial M,B)$.
\end{prop}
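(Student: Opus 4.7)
The plan is to exploit the Hamiltonian reformulation \eqref{CP:3} and apply Grönwall's inequality to the $L^2$-energy of $\tilde\psi$. Set $\tilde\psi:=UN^{n/2}\psi$ and $\check f:=-\hat\gamma(\hat\nu)UN^{(n+2)/2}D\psi$, so that $\partial_t\tilde\psi=-i\tilde D_t\tilde\psi+\check f$. The hypothesis $\psi|_{\partial M}\in C^\infty(\partial M,B)$ translates into $\tilde\psi(t)|_{\partial\hat\Sigma_0}\in\tilde B_t$ for every $t$. Using that $U(t)$ is an $L^2$-isometry, that $\hat\gamma(\hat\nu)$ preserves $\langle\cdot,\cdot\rangle_0$, and that $d\mu_{\hat\Sigma_t}=N^{-n}\,d\mu_{\Sigma_t}$, a short computation yields
\begin{align*}
\|\tilde\psi(t)\|^2_{L^2(\hat\Sigma_0)}&=\int_{\Sigma_t}|\psi|_0^2\,d\mu_{\Sigma_t},\\
\|\check f(t)\|^2_{L^2(\hat\Sigma_0)}&=\int_{\Sigma_t}N^2\,|D\psi|_0^2\,d\mu_{\Sigma_t},
\end{align*}
so the proposition reduces to an $L^2$-energy inequality for $\tilde\psi$ on the fixed Riemannian slice $\hat\Sigma_0$.

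Set $F(t):=\|\tilde\psi(t)\|^2_{L^2(\hat\Sigma_0)}$. Smoothness of $\psi$ and of the trivialization $U$ make $F$ differentiable with
\[
F'(t)=2\operatorname{Re}\langle\partial_t\tilde\psi,\tilde\psi\rangle_{L^2}=2\operatorname{Re}\langle-i\tilde D_t\tilde\psi,\tilde\psi\rangle_{L^2}+2\operatorname{Re}\langle\check f,\tilde\psi\rangle_{L^2}.
\]
The first summand on the right vanishes: by Condition~\ref{admissibleBound2} of Definition~\ref{admissibleBound} one has $\tilde B_t^\ast=\tilde B_t$, and Green's formula \eqref{RiemGreen} applied to the pair $(\tilde\psi(t),\tilde\psi(t))$ on $\hat\Sigma_0$ then gives $\langle\tilde D_t\tilde\psi,\tilde\psi\rangle_{L^2}\in\mathbb R$, so $\operatorname{Re}\langle-i\tilde D_t\tilde\psi,\tilde\psi\rangle_{L^2}=0$. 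The Cauchy--Schwarz / AM--GM inequality applied to the remaining term yields
\[
F'(t)\leq F(t)+\|\check f(t)\|^2_{L^2}.
\]

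Grönwall's inequality then gives
\[
F(t_1)\leq e^{t_1-t_0}F(t_0)+\int_{t_0}^{t_1}e^{t_1-s}\|\check f(s)\|^2_{L^2}\,ds,
\]
and unfolding the two norm identities above, while absorbing $\max_{M_{[t_0,t_1]}}N^2$ and the exponential factor into a single constant $C[t_0,t_1]$, produces the claimed estimate. The main delicate point is the boundary-term cancellation: one must verify that the spatial boundary value $\tilde\psi(t)|_{\partial\hat\Sigma_0}\in\tilde B_t$ genuinely forces the Green boundary pairing for $\tilde D_t$ on $\partial\hat\Sigma_0$ to vanish when paired against itself. This is exactly the content of Condition~\ref{admissibleBound2} of admissibility, and is equivalent, via the reductions in Subsections~\ref{conformal} and~\ref{CauchyId}, to the spacetime-level cancellation already recorded in Lemma~\ref{selfadjoint}.
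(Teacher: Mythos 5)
Your proof is correct, but it takes a different route than the paper. The paper argues directly at the spacetime level: it applies the divergence theorem for $D$ on the slab $M_{[t_0,t_1]}$, kills the timelike boundary integral over $(\partial M)_{[t_0,t_1]}$ by the slicewise computation of Lemma~\ref{selfadjoint} (which rests on Condition~\ref{admissibleBound1} of Definition~\ref{admissibleBound}, i.e.\ selfadjointness of $N^{\nicefrac12}(t,\cdot)B_t$ for $D_t$), and then applies Cauchy--Schwarz and Gronwall. You instead pass through the reduction of Subsections~\ref{conformal}--\ref{CauchyId} to the Hamiltonian form \eqref{CP:3}, differentiate $F(t)=\norm{\tilde\psi(t)}_{L^2(\hat\Sigma_0)}^2$ on the fixed slice, and cancel the boundary pairing in Green's formula \eqref{RiemGreen} for $\tilde D_t$ using Condition~\ref{admissibleBound2} ($\tilde B_t^\ast=\tilde B_t$), which is legitimate since $R\tilde\psi(t)\in\tilde B_t=\tilde B_t^\ast$ pairs to zero against itself; your norm identities under the conformal change and the isometry $U(t)$ are also correct ($\dt{\mu}_{\hat\Sigma_t}=N^{-n}\dt{\mu}_{\Sigma_t}$ exactly compensates the weight $N^{n/2}$, and $\hat\gamma(\hat\nu)$ preserves $\abs{\cdot}_0$). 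What each approach buys: the paper's version is geometrically more direct, needs no bookkeeping of conformal weights or of the trivialization, and isolates the cancellation in Lemma~\ref{selfadjoint}; your version keeps uniqueness in the same functional-analytic picture used later for existence, and shows that Condition~\ref{admissibleBound2} alone already suffices for the energy estimate (the paper uses Condition~\ref{admissibleBound1} here), which is a mildly interesting observation since the two conditions enter the admissibility definition for different purposes. Your final bookkeeping (absorbing $\max_{M_{[t_0,t_1]}}N^2$ and choosing $C\geq 1$ so that $e^{t_1-t_0}\leq e^{C(t_1-t_0)}$) does reproduce the stated inequality.
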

\begin{proof}
	The divergence theorem applied to the manifold $M_{[t_0,t_1]}$ with piecewise smooth boundary 
	\[
	\partial(M_{[t_0,t_1]})=(\partial M)_{[t_0,t_1]}\cup \Sigma_{t_1}\cup \Sigma_{t_0}
	\]
	yields
	\begin{align*}
		&\int_{M_{[t_0,t_1]}} \spann{D\psi,\psi}_{SM}+\spann{\psi,D\psi}_{SM}\dt{\mu}_M \\
		&=
		-\int_{(\partial M)_{[t_0,t_1]}}\spann{\gamma(\eta)\psi,\psi}_{SM}\dt{\mu}_{\partial M}+\int_{\Sigma_{t_1}}\abs{\psi}^2_0\dt{\mu}_{\Sigma_{t_1}}-\int_{\Sigma_{t_0}}\abs{\psi}^2_0\dt{\mu}_{\Sigma_{t_0}} .
	\end{align*}
	The left hand side can be estimated as
	\[
	\int_{M_{[t_0,t_1]}}\spann{D\psi,\psi}_{SM}+\spann{\psi,D\psi}_{SM}\dt{\mu}_M \leq C \int_{t_0}^{t_1}\int_{\Sigma_t}\abs{D\psi}^2_0+\abs{\psi}^2_0 \dt{\mu}_{\Sigma_t}\dt{t},
	\]
	where we used Cauchy-Schwarz, Fubini, boundedness of $N$ on the compact set $M_{[t_0,t_1]}$, and boundedness of $\gamma(\nu)$.
	The first term of the right hand side vanished by Lemma~\ref{selfadjoint}.
	We get in total
	\[
	\int_{\Sigma_{t_1}}\abs{\psi}_0^2\dt{\mu}_{\Sigma_{t_1}}\leq \int_{\Sigma_{t_0}}\abs{\psi}_0^2\dt{\mu}_{\Sigma_{t_0}}+C\int_{t_0}^{t_1}\int_{\Sigma_t}\abs{D\psi}^2_0+\abs{\psi}^2_0 \dt{\mu}_{\Sigma_t}\dt{t}.
	\]
	Gronwall's Lemma now gives us the desired estimate.
\end{proof}

This directly implies the following uniqueness statement in Theorem~\ref{main}:
\begin{koro}\label{uniqueness} 
Assume the Standard Setup~\ref{ssfirst}--\ref{sslast} and let $B$ be an admissible boundary condition. 
A section $\psi\in C^\infty(M,SM)$ with $\psi|_{\partial M}\in C^\infty(\partial M,B)$ is uniquely determined by $D\psi$ and by $\psi|_{\Sigma_0}$.
\end{koro}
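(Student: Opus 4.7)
The plan is to extract the corollary directly from the energy estimate of Proposition~\ref{EE}. By linearity of the problem, the statement reduces to the following claim: if $\psi\in C^\infty(M,SM)$ satisfies $D\psi=0$, $\psi|_{\Sigma_0}=0$, and $\psi|_{\partial M}\in C^\infty(\partial M,B)$, then $\psi\equiv 0$. Here linearity of the boundary condition is used to see that the difference $\psi_1-\psi_2$ of two candidate solutions still has boundary trace in the linear subspace $B_t\subseteq H^{1/2}(\Sigma_t,SM|_{\Sigma_t})$ at each time $t$, so Proposition~\ref{EE} is applicable to it.

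For future times $t_1>0$, I would apply Proposition~\ref{EE} directly with $t_0=0$: since $D\psi=0$ and $\psi|_{\Sigma_0}=0$, the right-hand side of the energy estimate vanishes, giving
\[
\int_{\Sigma_{t_1}}|\psi|_0^2\,\dt{\mu}_{\Sigma_{t_1}}\le e^{C t_1}\Big[C\int_0^{t_1}\int_{\Sigma_s}|D\psi|_0^2\,\dt{\mu}_{\Sigma_s}\dt{s}+\int_{\Sigma_0}|\psi|_0^2\,\dt{\mu}_{\Sigma_0}\Big]=0,
\]
hence $\psi|_{\Sigma_{t_1}}=0$ pointwise (since $|\cdot|_0$ is a genuine norm on each fiber).

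For past times $t_1<0$, the same proof of Proposition~\ref{EE} applies with the roles of $t_0$ and $t_1$ interchanged: the divergence theorem on the slab $M_{[t_1,0]}$ is symmetric in the two temporal ends, and the boundary term on $(\partial M)_{[t_1,0]}$ again vanishes thanks to Lemma~\ref{selfadjoint}, while the Cauchy--Schwarz/Fubini control on the bulk is insensitive to time direction. Gronwall's lemma then yields the mirrored estimate
\[
\int_{\Sigma_{t_1}}|\psi|_0^2\,\dt{\mu}_{\Sigma_{t_1}}\le e^{C(-t_1)}\Big[C\int_{t_1}^{0}\int_{\Sigma_s}|D\psi|_0^2\,\dt{\mu}_{\Sigma_s}\dt{s}+\int_{\Sigma_0}|\psi|_0^2\,\dt{\mu}_{\Sigma_0}\Big],
\]
and the same vanishing argument forces $\psi|_{\Sigma_{t_1}}=0$. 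Since every point of $M$ lies on some Cauchy hypersurface $\Sigma_{t_1}$, this gives $\psi\equiv 0$ on $M$.

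There is essentially no obstacle here: the heavy analytic work is already inside Proposition~\ref{EE} (divergence theorem, vanishing of the boundary integral via admissibility of $B$, Gronwall). The only mild point worth spelling out is the symmetry of the energy estimate under time reversal, which I would justify in one line by observing that $-T$ is also a Cauchy temporal function with gradient tangential to $\partial M$ and that the admissibility conditions on $B$ are symmetric in $t$, so Proposition~\ref{EE} applies verbatim to the reversed foliation.
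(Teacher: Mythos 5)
Your argument is correct and takes essentially the same route as the paper: the paper likewise deduces the corollary by applying the energy estimate of Proposition~\ref{EE} with $t_0=0$ to the (difference of) solutions with vanishing data, letting $t_1$ range over $\mathbb{R}$. Your explicit treatment of past times $t_1<0$, by rerunning the divergence-theorem/Gronwall argument on $M_{[t_1,0]}$ with the boundary term killed by Lemma~\ref{selfadjoint}, just spells out the time-reversal detail that the paper's two-line proof leaves implicit.
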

\begin{proof}
	Let $t_1\in\mathbb{R}$ be arbitrary and put $t_0=0$. 
	If $\psi|_{\Sigma_0}=0$ and $D\psi=0$, then Proposition~\ref{EE} implies that $\psi|_{\Sigma_{t_1}}=0$. 
	Since $t_1$ is arbitrarily, $\psi\equiv 0$ follows.
\end{proof}

\subsection{Existence of smooth solutions} \label{sec:exist}
Next we show existence of smooth solutions to the Cauchy problem~\eqref{CP:1}.
We will regularize the problem and then apply an Arzela-Ascoli argument to the resulting solutions.

	To show existence, we will show existence of smooth solutions to the Cauchy problem~\eqref{CP:3}. 
	Put
	\[
	C^\infty(\hat{M},\tilde{B}):=\{\psi\in C^\infty(\mathbb{R},S\hat{M}|_{\hat{\Sigma}_0}); \psi(t)|_{\partial\hat{\Sigma}_0}\in \tilde{B}_t\}.
	\]
	 In the following, we will only work on $S\hat{M}|_{\hat{\Sigma}_0}$, so let us shorten the notation by setting $H^k:=H^k(\hat{\Sigma}_0,S\hat{M}|_{\hat{\Sigma}_0})$ and $H^k_{\tilde{B}_t}:=H^k_{\tilde{B}_t}(\hat{\Sigma}_0,S\hat{M}|_{\hat{\Sigma}_0})$.

\smallskip
\emph{(1.) Regularized problem} 
		
		\smallskip
		For any $\varepsilon>0$ and $k\in \mathbb{N}$, the regularized problem
		\begin{align}\label{regPro}
			(\partial_t+i\tilde{D}_{t,\tilde{B}_t}J_{\tilde{B}_t}^{(\varepsilon)})\tilde{\psi}^{(\varepsilon)}(t)=\check{f}(t), 
		\end{align}
		with initial condition $\tilde{\psi}^{(\varepsilon)}(0)=N^{\frac n 2}(0)\psi_0$ has a unique solution $\tilde{\psi}^{(\varepsilon),k}\in C^1(\mathbb{R},H^k_{\tilde{B}_\bullet})$.  
		This follows from Theorem~X.69 in \cite{ReedSimon2} and the fact that $\{\tilde{D}_{t,\tilde{B}_t}J_{\tilde{B}_t}^{(\varepsilon)}\}_{t\in\mathbb{R}}$ is a strongly continuous family of bounded operators from $H^k_{\tilde{B}_t}$ to $H^k_{\tilde{B}_t}$.
		Since $H^k_{\tilde{B}_t}\subseteq H^l_{\tilde{B}_t}$ for $l<k$, uniqueness of the solutions implies that $\tilde{\psi}^{(\varepsilon)}:= \tilde{\psi}^{(\varepsilon),k}$ is independent of $k$. 
		In particular, $\tilde{\psi}^{(\varepsilon)}$ is smooth in spatial directions.
		
		\smallskip
\emph{(2.) Requirements of the Arzela-Ascoli theorem}

		\smallskip
		We will derive bounds on the growth of $\tilde{\psi}^{(\varepsilon)}\in C^1(\mathbb{R},H^k_{\tilde{B}_\bullet})$ in time.
		The important fact will be that the constants $c_j$ ocurring in the estimates below do not depend on $u,u_0,f$ and $\varepsilon$. 
		They do depend on time $t$, but continuously, hence they are bounded on compact time intervals. 
		We compute the following $t$-derivative for any $l\in\mathbb{N}_0$:
		 
		 \begin{align*}
		 \frac{\dt{}}{\dt{t}}\norm{\tilde{D}^l_{t,\tilde{B}_t}\tilde{\psi}^{(\varepsilon)}(t)}_{L^2}^2
		 &=
		 2\Rea\spann{\partial_t\tilde{D}^l_{t,\tilde{B}_t}\tilde{\psi}^{(\varepsilon)}(t),\tilde{D}^l_{t,\tilde{B}_t}\tilde{\psi}^{(\varepsilon)}(t)}_{L^2} \\
		 &=
		 2\Rea \left(\spann{[\partial_t,\tilde{D}^l_{t,\tilde{B}_t}]\tilde{\psi}^{(\varepsilon)}(t),\tilde{D}^l_{t,\tilde{B}_t}\tilde{\psi}^{(\varepsilon)}(t)}_{L^2} \right. \\
		 &\quad
		 \left.+\spann{\tilde{D}^l_{t,\tilde{B}_t}\partial_t\tilde{\psi}^{(\varepsilon)}(t),\tilde{D}^l_{t,\tilde{B}_t}\tilde{\psi}^{(\varepsilon)}(t)}_{L^2}  \right) .
		 \end{align*}
		 A priori, the commutator $[\partial_t,\tilde{D}^l_{t,\tilde{B}_t}]$ is a differential operator of order $l+1$, but its principal symbol vanishes,
		 \[
		 \sigma_ {[\partial_t,\tilde{D}^l_{t,\tilde{B}_t}]}(\zeta)=[\sigma_{\partial_t}(\zeta),\sigma_{\tilde{D}_t^k}(\zeta)]=0,
		 \]
		because the principal symbol $\sigma_{\partial_t}(\zeta)=\zeta(\partial_t)$ is scalar.
		Hence, $[\partial_t,\tilde{D}^l_{t,\tilde{B}_t}]$ is of order~$l$ and we can bound the $L^2$-norm of the commutator by $\norm{\tilde{\psi}^{(\varepsilon)}(t)}_{\tilde{B}_t,l}$.
		Since $\psi^{(\varepsilon)}$ solves \eqref{regPro}, we find
		\begin{align*}
		\frac{\dt{}}{\dt{t}}\norm{\tilde{D}^l_{t,\tilde{B}_t}\tilde{\psi}^{(\varepsilon)}(t)}_{L^2}^2
		&\leq 
		c_1\norm{\psi^{(\varepsilon)}(t)}^2_{\tilde{B}_t,l}+2\Rea\spann{\tilde{D}^l_{t,\tilde{B}_t}\partial_t\tilde{\psi}^{(\varepsilon)}(t),\tilde{D}^l_{t,\tilde{B}_t}\tilde{\psi}^{(\varepsilon)}(t)}_{L^2} \\
		&\leq
		c_1 \norm{\tilde{\psi}^{(\varepsilon)}(t)}^2_{\tilde{B}_t,l} + 2\Rea\spann{\tilde{D}^l_{t,\tilde{B}_t}\check{f}(t),\tilde{D}^l_{t,\tilde{B}_t}\tilde{\psi}^{(\varepsilon)}(t)}_{L^2} \\
		&\quad
		+2\operatorname{Im}\spann{\tilde{D}^{l+1}_{t,\tilde{B}_t}J_{\tilde{B}_t}^{(\varepsilon)}\tilde{\psi}^{(\varepsilon)}(t),\tilde{D}^l_{t,\tilde{B}_t}\psi^{(\varepsilon)}(t)}_{L^2} \\
		&\leq
		c_1 \norm{\tilde{\psi}^{(\varepsilon)}(t)}^2_{\tilde{B}_t,l} +  \norm{\tilde{D}^l_{t,\tilde{B}_t}\check{f}(t)}^2_{L^2}+\norm{\tilde{D}^l_{t,\tilde{B}_t}\tilde{\psi}^{(\varepsilon)}(t)}^2_{L^2}\\
		&\quad
		+2\operatorname{Im}\spann{\tilde{D}^{l+1}_{t,\tilde{B}_t}J_{\tilde{B}_t}^{(\varepsilon)}\tilde{\psi}^{(\varepsilon)}(t),\tilde{D}^l_{t,\tilde{B}_t}\psi^{(\varepsilon)}(t)}_{L^2}.
		\end{align*}
		The last term vanishes because $J_{\tilde{B}_t}^{(\nicefrac{\varepsilon}{2})}$ and $\tilde{D}_{t,\tilde{B}_t}$ commute and are selfadjoint so that
		\begin{align*}
			\operatorname{Im}\spann{\tilde{D}^{l+1}_{t,\tilde{B}_t}J_{\tilde{B}_t}^{(\varepsilon)}\tilde{\psi}^{(\varepsilon)}(t),\tilde{D}^l_{t,\tilde{B}_t}\psi^{(\varepsilon)}(t)}_{L^2} 
			=
			\operatorname{Im}\spann{\tilde{D}^{l+1}_{t,\tilde{B}_t}J_{\tilde{B}_t}^{(\nicefrac{\varepsilon}{2})}\tilde{\psi}^{(\varepsilon)}(t),\tilde{D}^{l}_{t,\tilde{B}_t}J_{\tilde{B}_t}^{(\nicefrac{\varepsilon}{2})}\tilde{\psi}^{(\varepsilon)}(t)}_{L^2} 
			=0.
		\end{align*}
		Hence
		\[
		\frac{\dt{}}{\dt{t}}\norm{\tilde{D}^l_{t,\tilde{B}_t}\tilde{\psi}^{(\varepsilon)}(t)}_{L^2}^2
		\leq
		\norm{\tilde{D}^l_{t,\tilde{B}_t}\check{f}(t)}^2_{L^2}+c_1\norm{\tilde{\psi}^{(\varepsilon)}(t)}^2_{\tilde{B}_t,l}+\norm{\tilde{D}^l_{t,\tilde{B}_t}\tilde{\psi}^{(\varepsilon)}(t)}^2_{L^2}.
		\]
		Adding this inequality for $l=0$ and $l=k$, we find
		\[
		\frac{\dt{}}{\dt{t}}	\norm{\tilde{\psi}^{(\varepsilon)}(t)}^2_{\tilde{B}_t,k}\leq  \norm{\check{f}(t)}^2_{\tilde{B}_t,k}+c_2\norm{\tilde{\psi}^{(\varepsilon)}(t)}^2_{\tilde{B}_t,l}.
		\]
		Gronwall's Lemma yields for $t>0$
		\begin{align*}
		\norm{\tilde{\psi}^{(\varepsilon)}(t)}^2_{\tilde{B}_t,k}
		&\leq
		\norm{\psi^{(\varepsilon)}(0)}^2_{\tilde{B}_t,k}\exp\left(\int_0^t c_2(s)\dt{s}\right)+\int_0^t\norm{\check{f}(s)}^2_{\tilde{B}_t,k}\exp\left(\int_s^t c_2(\sigma)\dt{\sigma}\right)\dt{s} \\
		&\leq
		c_3\left[\norm{\psi^{(\varepsilon)}(0)}^2_{\tilde{B}_t,k}+\int_0^t\norm{\check{f}(s)}^2_{\tilde{B}_t,k}\dt{s}\right] \\
		&=
		c_3\left[\norm{N^{\nicefrac{n}{2}}(0)\psi_0}^2_{\tilde{B}_t,k}+\int_0^t\norm{\check{f}(s)}^2_{\tilde{B}_t,k}\dt{s}\right]
		\end{align*}
		and similarly for $t<0$.
		Thus, for fixed $t\in \mathbb{R}$, the set $\{\tilde{\psi}^{(\varepsilon)}(t);\varepsilon>0\}$ is bounded in $H^{k}_{\tilde{B}_t}$ for all $k$.
		By the Rellich-Kondrachov theorem, $\{\psi^{(\varepsilon)}(t);~\varepsilon>0\}$ is relatively compact in $H^k$ for all $k$ and $t\in \mathbb{R}$.
		
		Taking into account that $J_{\tilde{B}_t}^{(\varepsilon)}$ is a contraction on $H^{k+1}_{\tilde{B}_t}$ we get
		\begin{align*}
			\norm{\partial_t \tilde{\psi}^{(\varepsilon)}(t)}_{\tilde{B}_t,k}
			&=
			\norm{\check{f}(t)-i\tilde{D}_{t,\tilde{B}_t}J_{\tilde{B}_t}^{(\varepsilon)}\tilde{\psi}^{(\varepsilon)}(t)}_{\tilde{B}_t,k} \\
			&\leq
			\norm{\check{f}(t)}_{\tilde{B}_t,k}+c_4 \norm{J_{\tilde{B}_t}^{(\varepsilon)}\tilde{\psi}^{(\varepsilon)}(t)}_{\tilde{B}_t,k+1} \\
			&\leq
			\norm{\check{f}(t)}_{\tilde{B}_t,k}+c_4 \norm{\tilde{\psi}^{(\varepsilon)}(t)}_{\tilde{B}_t,k+1} \\
			&\leq
			c_5,
		\end{align*}
		where $c_5$ still does not depend on $\varepsilon$.
		Using that $\norm{\cdot}_{\tilde{B}_{t},k}\simeq \norm{\cdot}_k$ on $H^k_{\tilde{B}_t}$, we see that $t\mapsto \tilde{\psi}^{(\varepsilon)}(t)$ is equicontinuous. 

		\smallskip
	 \emph{(3.) Arzela-Ascoli theorem} 
	 
	 \smallskip
		For $k,l\in\mathbb{N}$ the Arzela-Ascoli theorem implies that $\{\tilde{\psi}^{(\varepsilon)};\varepsilon >0\}\subseteq C^0([-l,l],H^k)$ is relatively compact.
		Thus we obtain a sequence $\varepsilon_j\to0$ such that $\tilde{\psi}^{(\varepsilon_j)}\to\tilde{\psi}\in C^0([-l,l],H^k)$ as $j\to \infty$.
		By a diagonal subsequence argument we can assume that $\tilde{\psi}^{(\varepsilon_j)}\to \tilde{\psi}\in C^0([-l,l],H^k)$ for all $k,l\in\mathbb{N}$.
		Therefore the convergence $\tilde{\psi}^{(\varepsilon_j)}\to\psi$ is locally uniform in $C^0(\mathbb{R},H^k)$ for all $k$. 
		In particular, $\tilde{\psi}^{(\varepsilon_j)}(t)\to\tilde{\psi}(t)$ in $H^k$ for every $t\in\mathbb{R}$.
		Since $H^k_{\tilde{B}_t}\subseteq H^k$ closed, this implies $\tilde{\psi}(t)\in H^k_{\tilde{B}_t}$ for every $t$.
		Thus, $\tilde{\psi}$ satisfies the Lorentzian boundary condition.

		\smallskip
	\emph{(4.) Solution to the Cauchy problem~\eqref{CP:3}}
		
		\smallskip
		For $t=0$ we have $\tilde{\psi}^{(\varepsilon)}(0)=N^{\frac n 2}(0)\psi_0$ for every $\varepsilon$ and hence $\tilde{\psi}(0)=N^{\frac n 2}(0)\psi_0$.
		To show $\tilde{D}\tilde{\psi}=\check{f}$ is a bit more involved because $\tilde{D}$ contains a time derivative and so far we only have $C^0$-convergence in time as $\varepsilon_j\to 0$.
		In order to get rid of the time derivatives, we integrate the regularized problem~\eqref{regPro} and we obtain
		\begin{align}\label{main:eq2}
			\tilde{\psi}^{(\varepsilon_j)}(t)-N^{\frac n 2}(0)\psi_0=\int_0^t \left[-i\tilde{D}_{s,\tilde{B}_s}J_{\tilde{B}_s}^{(\varepsilon_j)} \tilde{\psi}^{(\varepsilon_j)}(s)+\check{f}(s)\right]\dt{s}.
		\end{align}
		Now we let $\varepsilon_j\to 0$.
		For the left hand side of~\eqref{main:eq2} we find $\tilde{\psi}^{(\varepsilon_j)}(t)-N^{\frac n 2}(0)\psi_0\to \tilde{\psi}(t)-N^{\frac n 2}(0)\psi_0$.
		For the right hand side of \eqref{main:eq2} we consider the first summand under the integral which is the one depending on $\varepsilon_j$.
		It converges pointwise
		\begin{align*}
			\tilde{D}_{s,\tilde{B}_s}J_{\tilde{B}_s}^{(\varepsilon_j)} \tilde{\psi}^{(\varepsilon_j)}(s)
			&=
			J_{\tilde{B}_s}^{(\varepsilon_j)} \tilde{D}_{s,\tilde{B}_s}\tilde{\psi}^{(\varepsilon_j)}(s) \\
			&=
			J_{\tilde{B}_s}^{(\varepsilon_j)} \tilde{D}_{s,\tilde{B}_s}\tilde{\psi}(s)
			+ J_{\tilde{B}_s}^{(\varepsilon_j)} \tilde{D}_{s,\tilde{B}_s}\big(\tilde{\psi}^{(\varepsilon_j)}(s) - \tilde{\psi}(s)\big) \\
			&\to
			\tilde{D}_{s,\tilde{B}_s}\tilde{\psi}(s) + 0.
		\end{align*}
		Here used that $J_{\tilde{B}_s}^{(\varepsilon_j)} \to  \id$ strongly, $J_{\tilde{B}_s}^{(\varepsilon_j)}$ is a contraction on $H^k_{\tilde{B}_s}$, and $\tilde{D}_{s,\tilde{B}_s}\big(\tilde{\psi}^{(\varepsilon_j)}(s) - \tilde{\psi}(s)\big)\to 0$ in $H^k_{\tilde{B}_s}$.
		Moreover, the integrand is bounded in $H^k$ because
		\begin{align*}
			\norm{\tilde{D}_{s,\tilde{B}_s}J_{\tilde{B}_s}^{(\varepsilon_j)}\tilde{\psi}^{(\varepsilon_j)}(s)}_k
			&=
			\norm{J_{\tilde{B}_s}^{(\varepsilon_j)} \tilde{D}_{s,\tilde{B}_s}\tilde{\psi}^{(\varepsilon_j)}(s)}_{k}
			\leq
			\norm{\tilde{D}_{s,\tilde{B}_s} \tilde{\psi}^{(\varepsilon_j)}(s)}_{k} .
		\end{align*}
		Dominanted convergence yields
		\[
		\tilde{\psi}(t)-N^{\frac n 2}(0)\psi_0
		=
		\int_0^t \left[-i\tilde{D}_{s,\tilde{B}_s} \tilde{\psi}(s)+\check{f}(s)\right]\dt{s}.
		\]
		Differentiating this equation shows that $\tilde{\psi}$ solves \eqref{CP:3}.

		\smallskip
	 \emph{(5.) Regularity} 
		
		\smallskip
		So far we know continuity of $\tilde{\psi}$ in time and smoothness in the spatial directions.
		From $\tilde{\psi}\in C^0(\mathbb{R},H^k)$ and by the equation in \eqref{CP:3} we find $\partial_t\tilde{\psi}\in C^0(\mathbb{R},H^{k-1})$ for all $k$.
		Hence $\tilde{\psi}\in C^1(\mathbb{R},H^{k})$ for all $k$.  
		For the second derivative, we differentiate the equation in \eqref{CP:3} in time and by comparing both sides of the equation, we find $\tilde{\psi}\in C^2(\mathbb{R},H^k)$ for all $k$.
		Iterating this argument, we see that $\tilde{\psi}\in C^\infty(\mathbb{R},C^\infty(\hat{\Sigma}_0,S\hat{M}|_{\hat{\Sigma}_0}))$.
		Hence we showed the existence of a smooth solution to the Cauchy problem~\eqref{CP:3} and equivalently to the Cauchy problem~\eqref{CP:1}.

\subsection{Well-posedness of the Cauchy problems}
\label{sec:cont}
In this subsection, we complete the proof of Theorem~\ref{main} by showing that the solution constructed in Subsection~\ref{sec:exist} depends continuously on the Cauchy data.
We write
\[
C^\infty(M,B)
=
\{\psi\in C^\infty(M,SM);~\psi|_{\partial M}\in C^\infty(\partial M,B)\}.
\]
Consider the map
\begin{align*}
P:=D\oplus \res_0\colon C^\infty(M,B)&\to C^\infty(M,SM)\oplus C^\infty(\Sigma,SM|_{\Sigma_0}), \\
\psi &\mapsto (D\psi,\psi|_{\Sigma_0}),
\end{align*}
which is clearly continuous and linear. 

Fix compact sets $A_1\subseteq M$ and $A_2\subseteq \Sigma_0$ such that $A_i\cap \partial M=\emptyset$.
Then define $C^\infty_{A_1}(M,SM)$ as the space of smooth sections with support contained in $A_1$ and similarly for $C^\infty_{A_2}(\Sigma_0,SM|_{\Sigma_0})$.
Then $C^\infty_{A_1}(M,SM)\oplus C^\infty_{A_2}(\Sigma_0, SM|_{\Sigma_0})$ is closed in $C^\infty_{\cc}(M,SM)\oplus C^\infty_{\cc}(\Sigma_0,SM|_{\Sigma_0})$.
Hence
\[
\mathscr{V}_{A_1,A_2}
:=
P^{-1}\left(C^\infty_{A_1}(M,SM)\oplus C^\infty_{A_2}(\Sigma, SM|_\Sigma)\right)
\]
is closed in $C^\infty(M,B)$. 
In particular, all these spaces are Fréchet spaces.
By existence and uniqueness of the solution, $P$ maps $\mathscr{V}_{A_1,A_2}$ bijectively onto $C^\infty_{A_1}(M,SM)\oplus C^\infty_{A_2}(\Sigma, SM|_\Sigma)$.
The open mapping theorem for Fréchet spaces (see Theorem~V.6 in \cite{ReedSimon1}) then gives us that
\[
(P|_{\mathscr{V}_{A_1,A_2}})^{-1}\colon C^\infty_{A_1}(M,SM)\oplus C^\infty_{A_2}(\Sigma, SM|_\Sigma)\to \mathscr{V}_{A_1,A_2} \subseteq C^\infty(M,B)
\]
is continuous as well.
Since $C^\infty_{\cc}(M,SM)$ and $C^\infty_{\cc}(\Sigma_0,SM|_{\Sigma_0})$ carry the direct limit topologies as we vary over $A_1$ and $A_2$, respectively, the solution map
\begin{align}
C^\infty_{\cc}(M,SM)\oplus C^\infty_{\cc}(\Sigma_0,SM|_{\Sigma_0})\to C^\infty(M,B),\quad
(f,\psi_0)\mapsto \psi,
\label{eq:solutionmap}
\end{align}
is also continuous.
This concludes the proof of Theorem~\ref{main}.

\begin{bem}\label{remarks} 
	\begin{enumerate}[label=(\alph*), wide, labelwidth=!, labelindent=0pt]
		\item In Cauchy problem~\eqref{CP:1}, the initial value $\psi_0$ was imposed on $\Sigma_0$.
		By shifting the $t$-parameter, one can equally well impose initial values on any time\-slice $\Sigma_{t_0}$.
		\item The solution map \eqref{eq:solutionmap} is not surjective.
		In fact, the solutions satisfy an additional finite propagation property as we will see in the next section.
		\item Theorem~\ref{main} can be extended to the case of noncompact Cauchy hypersurfaces, but with still compact boundary.
		This requires a covering argument as in \cite{Waves,DGM}, for example.
	\end{enumerate}
\end{bem}

\section{Finite propagation speed and Green operators} \label{sec:propagation}

We now study the support of solutions to the Cauchy problem~\eqref{CP:1} which, as usual for wave equations, essentially says that a wave propagates with the speed of light at most.
There is an interesting violation of this principle, however.
As soon as the wave hits the boundary somewhere, the \emph{whole} boundary radiates off instantenously (w.r.t.\ the given temporal function).
This is due to the global nature of the boundary conditions and violates the causal principle that no signal should propagate fast than with the speed of light.
We show by example that this effect really occurs.

Once we have proved the support properies, we will show existence of advanced and retarded Green's operators for the Dirac operator on a globally hyperbolic spacetime with timelike boundary.

\subsection{Finite propagation speed}
We start by analyzing the propagation of solutions to the Cauchy problem~\eqref{CP:1}.
\begin{prop}\label{prop:support}
	Assume the Standard Setup~\ref{ssfirst}--\ref{sslast} and let $B$ be an admissible boundary condition.
	Let $\psi\in C^\infty(M,SM)$ be the smooth solution to the Cauchy problem~\eqref{CP:1} for the Cauchy data $(f,\psi_0)$.
	Then the support of $\psi$ satisfies
	\begin{align}\label{support}
	J_{\pm}(\Sigma_0)\cap \operatorname{supp}(\psi)
	\subseteq
	J_{\pm}\big{(}\operatorname{supp}(f)\cap J_{\pm}(\Sigma_0)\cup \operatorname{supp}(\psi_0)\cup \partial\Sigma_{t_{\pm}}\big{)}
	\end{align}
	where 
	\begin{align*} 
		t_+&=\min T\big(J_+\big(\operatorname{supp}(f)\cap J_+(\Sigma_0)\cup \operatorname{supp}(\psi_0)\big)\cap\partial M\big), \\
		t_-&=\max  T\big(J_-\big(\operatorname{supp}(f)\cap J_-(\Sigma_0)\cup \operatorname{supp}(\psi_0)\big)\cap\partial M\big) .
	\end{align*}
\end{prop}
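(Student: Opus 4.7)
The plan is to prove the $+$ version; the $-$ case follows by time-reversal. Set $K := \operatorname{supp}(f) \cap J_+(\Sigma_0) \cup \operatorname{supp}(\psi_0)$. The key tool is the embedding of Remark~\ref{bem:embedding} into a globally hyperbolic spacetime $\bar M$ without boundary. Since $f$ and $\psi_0$ have compact support in the interior of $M$, they extend by zero to smooth, compactly supported $\bar f \in C^\infty(\bar M, S\bar M)$ and $\bar\psi_0 \in C^\infty(\bar\Sigma_0, S\bar M|_{\bar\Sigma_0})$. The classical Cauchy problem on $\bar M$ then has a unique smooth solution $\bar\psi$ obeying the standard finite propagation bound $\operatorname{supp}(\bar\psi) \cap J_+^{\bar M}(\bar\Sigma_0) \subseteq J_+^{\bar M}(K)$.

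Step A (support on $\{T \le t_+\}$): For $t < t_+$, $J_+^{\bar M}(K) \cap \partial\Sigma_t = \emptyset$ by the very definition of $t_+$, so $\bar\psi$ vanishes in an open $M$-neighborhood of $\partial\Sigma_t$; in particular $\bar\psi|_M$ satisfies the admissible boundary condition vacuously on $\partial M \cap \{T < t_+\}$. The difference $\chi := \psi - \bar\psi|_M$ then obeys $D\chi = 0$, $\chi|_{\Sigma_0} = 0$, and $\chi|_{\partial\Sigma_t} = \psi|_{\partial\Sigma_t} \in B_t$ for $t < t_+$. Running the proof of Proposition~\ref{EE} on the slab $M_{[0, t_+ - \varepsilon]}$ (the boundary flux vanishes on $(\partial M)_{[0, t_+ - \varepsilon]}$ by the argument of Lemma~\ref{selfadjoint}, which only needs the boundary condition on that slab) forces $\chi|_{\Sigma_{t_+ - \varepsilon}} = 0$; letting $\varepsilon \downarrow 0$ and using continuity yields $\psi = \bar\psi|_M$ throughout $M_{[0, t_+]}$, hence $\operatorname{supp}(\psi) \cap \{T \le t_+\} \subseteq J_+(K)$.

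Step B (support on $\{T > t_+\}$): Fix $p \in M$ with $T(p) > t_+$ and $p \notin J_+(K \cup \partial\Sigma_{t_+})$; I will show $\psi(p) = 0$. Because $\nabla T$ is tangent to $\partial M$ by Standard Setup (S2), the boundary inherits a metric of the form $-N^2 dt^2 + g_t|_{\partial\Sigma}$, making $\partial M$ globally hyperbolic with Cauchy slices $\partial\Sigma_t$; hence every $q \in \partial M$ with $T(q) \ge t_+$ lies in $J_+(\partial\Sigma_{t_+})$. Combined with monotonicity of the extended temporal function along causal curves of $\bar M$, an elementary analysis (examining the last crossing of $\partial M$ by any putative causal curve in $\bar M$ that exits and re-enters $M$) rules out any causal curve from $K \cup \partial\Sigma_{t_+}$ to $p$, giving $p \notin J_+^{\bar M}(K \cup \partial\Sigma_{t_+})$ as well. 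By Step~A, $\psi$ vanishes in an open $M$-neighborhood of $\partial M \cap \{T < t_+\}$, so extending $\psi$ by zero across $\partial M$ produces a section $\tilde\psi$ smooth on a $\bar M$-neighborhood $U$ of $J_-^{\bar M}(p) \cap J_+^{\bar M}(\bar\Sigma_0)$ (which only meets $\partial M$ where $T < t_+$), satisfying $D\tilde\psi = \tilde f$ with $\tilde\psi_0$ and $\tilde f$ both vanishing on $J_-^{\bar M}(p)$. Classical finite propagation for the Dirac operator on $\bar M$ then yields $\tilde\psi(p) = \psi(p) = 0$.

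The principal obstacle is the geometric bootstrap in Step~B, ruling out causal curves in $\bar M$ that detour through $\bar M \setminus M$ to reach $p$. This rests on $\partial\Sigma_{t_+}$ being a Cauchy hypersurface of $\partial M$ (hence on $\nabla T$ being tangent to $\partial M$) together with monotonicity of the extended temporal function along causal curves in $\bar M$; both features are baked into the Standard Setup. A secondary technicality is verifying that the zero extension of $\psi$ across $\partial M \cap \{T < t_+\}$ is genuinely smooth, which is ensured by the open-neighborhood vanishing of $\psi$ obtained in Step~A.
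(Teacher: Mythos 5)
Your proposal is correct and takes essentially the same route as the paper: embed $M$ into the boundaryless extension $\bar M$ of Remark~\ref{bem:embedding}, use the classical support theorem there, identify $\psi$ with $\bar\psi|_M$ up to time $t_+$ via the energy/uniqueness estimate of Proposition~\ref{EE}, and then run a finite-propagation argument for points with $T>t_+$. The only (minor) deviation is in that last step: the paper stays inside $M$ and applies the symmetric-hyperbolic energy estimate (Theorem~5.3 in \cite{BGreen}) on the region $J_-(x)\cap M_{[t_+,\infty)}$, which avoids your zero-extension of $\psi$ across $\partial M$ and the attendant smoothness check, whereas your variant instead needs the (correct) last-crossing argument ruling out causal detours through $\bar M\setminus M$.
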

\begin{figure}[h]
	\begin{overpic}[scale=0.7]{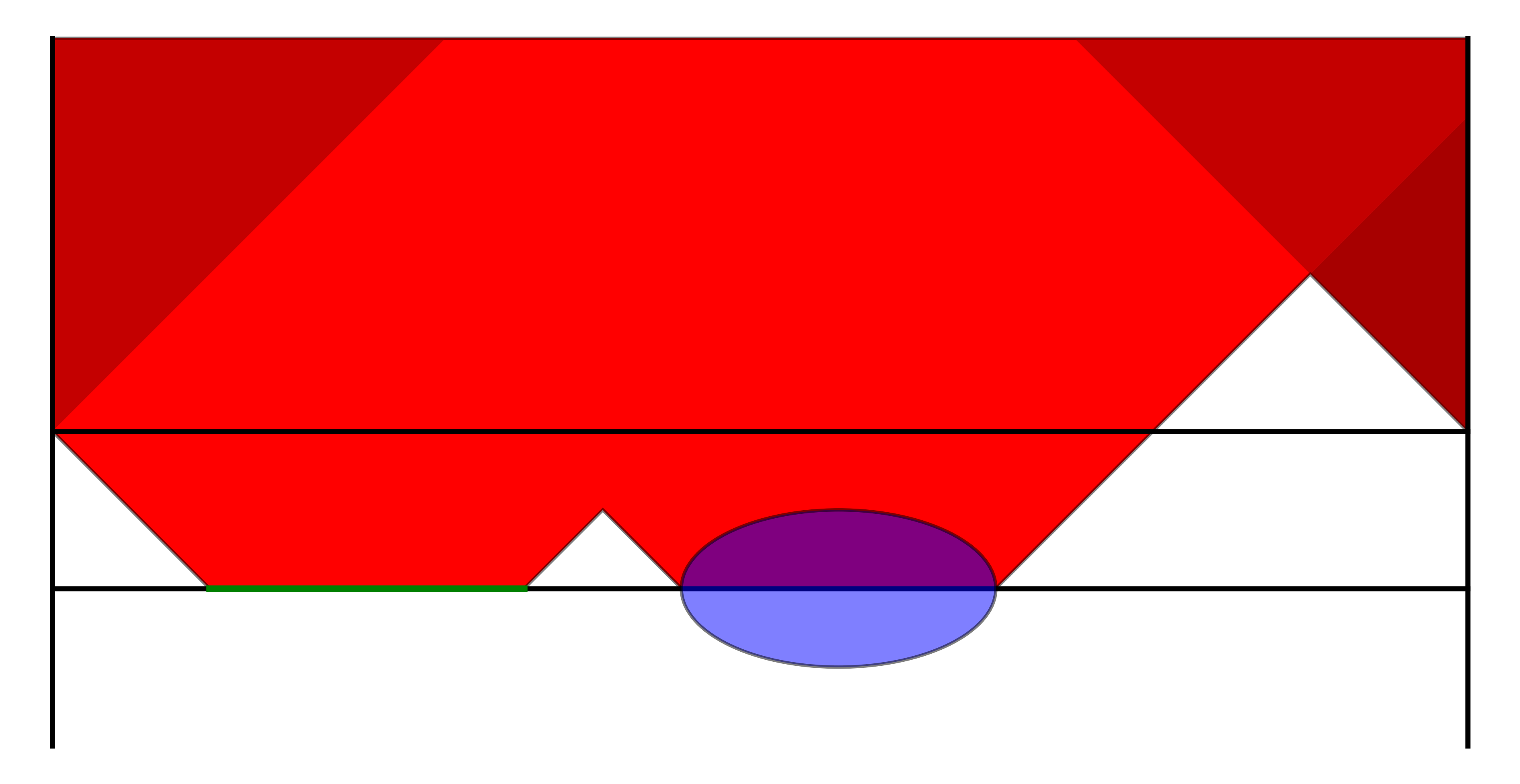}
		\put(90,8.8){$\Sigma_0$}
		\put(90,19.5){$\Sigma_{t_+}$}
		\put(50,9.5){$\operatorname{supp}(f)$}
		\put (16,9) {\textcolor[rgb]{0,.5,0}{$\operatorname{supp}(\psi_0)$}}
		\put (98,4) {$\partial M$}
		\put (98,11) {$\partial \Sigma_0$}
		\put (98,21.5) {$\partial \Sigma_{t_+}$}
		\put (-4,11) {$\partial \Sigma_0$}
		\put (-4,21.5) {$\partial \Sigma_{t_+}$}
		\put (-4,4) {$\partial M$}
	\end{overpic}
	\caption{Support of solution $\psi$.}
	\label{support:property:pic}
\end{figure}

\begin{proof}
	We will prove Statement~\eqref{support} only for the $J_+(\Sigma_0)$ case, the argument for $J_-(\Sigma_0)$ being completely analogous.
	We extend $(M,g)$ to a spatially compact globally hyperbolic spacetime $(\bar{M},\bar{g})$ without timelike boundary as described in Remark~\ref{bem:embedding}.
	Since $\bar{M}$ is obtained from $M$ be doubling, $\bar{M}$ is also spin and the Dirac operator extends as well.
	Since $\psi_0$ and $f$ are supported away from the boundary, we can extend them by zero to smooth sections $\bar{\psi}_0$ and $\bar{f}$ on $\bar{\Sigma}_0$ and $\bar{M}$, respectively.
	Here $\bar{\Sigma}_0$ is a smooth spacelike Cauchy hypersurface of $\bar{M}$ extending $\Sigma_0$.

	It is well known that the the Cauchy problem
	\begin{align*}
	\begin{cases}
	\bar{D}\bar{\psi}=\bar{f} \quad\text{on } \bar{M},& \\
	\bar{\psi}|_{\bar{\Sigma}_0}=\bar{\psi}_0,
	\end{cases}
	\end{align*}
	is well posed (see for example Section~3.7 in \cite{Waves}).
	Furthermore, its solution satisfies (see Corollary~3.7.5 in \cite{Waves}):
	\begin{align*}
	\operatorname{supp}(\bar{\psi})\cap J_\pm(\bar{\Sigma}_0)\subseteq J_\pm\big{(}\operatorname{supp}(\bar{f})\cap J_\pm(\bar{\Sigma}_0) \cup \operatorname{supp}(\bar{\psi}_0)\big{)}.
	\end{align*}
	Thus, for $t\in(t_-,t_+)$, the solution $\bar{\psi}$ is supported in the interior of $M$.
	By uniqueness, its restriction to $M$ must coincide with the solution $\psi$ of \eqref{CP:1} on $M_{(t_-,t_+)}$.
	This proves the statement on $M_{(t_-,t_+)}$.

	Now let $x\in M_{[t_+,\infty)}\setminus J_{\pm}\big{(}\operatorname{supp}(f)\cap J_{\pm}(\Sigma_0)\cup \operatorname{supp}(\psi_0)\cup \partial\Sigma_{t_{\pm}}\big{)}$.
	We need to show $\psi(x)=0$.
	Then $K:=J_-(x)\cap M_{[t_+,\infty)}$ does not intersect $\partial M$ nor the support of $f$, see Figure~\ref{support:property:pic2}.
	\begin{figure}[h]
		\begin{overpic}[scale=0.7]{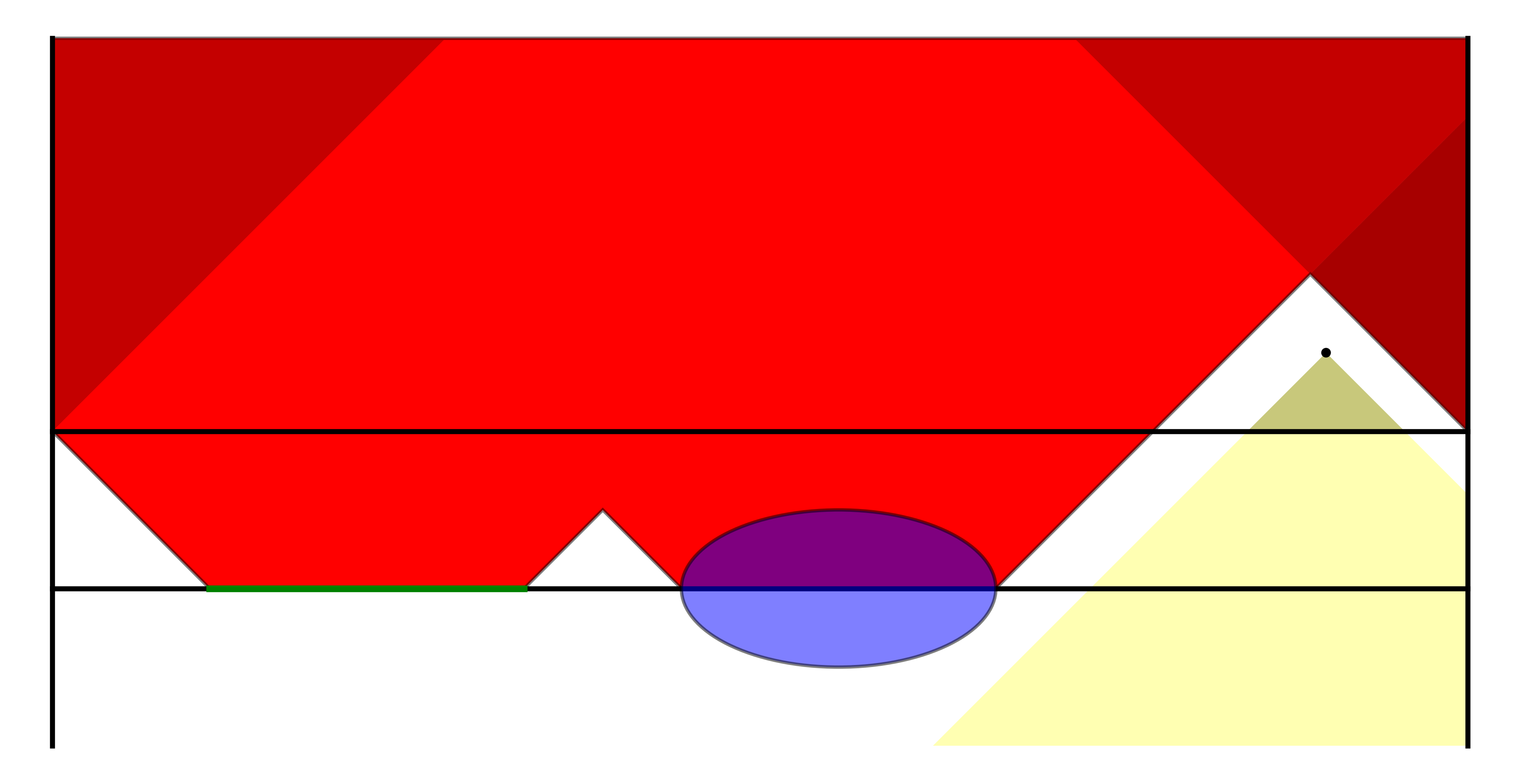}
			\put(49.7,9.5){$\operatorname{supp}(f)$}
			\put (16,9) {\textcolor[rgb]{0,.5,0}{$\operatorname{supp}(\psi_0)$}}
			\put (85,29){$x$}
			\put(86,24){$K$}
		\end{overpic}
		\caption{Region $K$ on which the energy estimate is applied.}
		\label{support:property:pic2}
	\end{figure}

	Hence $D\psi=0$ on $K$.
	Since the spinorial Dirac operator is a symmetric hyperbolic system, we can apply the energy estimate Theorem~5.3 in \cite{BGreen}.
	This yields for all $t\in[t_+,t(x)]$
	\begin{align*}
	\int_{\Sigma_t\cap K} \abs{\psi}_0^2 \dt{\mu_{\Sigma_t}}
	\le
	C\left[\int_{t_+}^t\int_{\Sigma_s}\abs{D\psi}_0^2 \dt{\mu_{\Sigma_s}}\dt{s} +  \int_{\Sigma_{t_+}\cap K} \abs{\psi}_0^2 \dt{\mu_{\Sigma_{t_+}}}   \right]
	= 0.
	\end{align*}
	Thus $\psi$ vanishes on $K$ and, in particular, at $x$.
\end{proof}

For classical local elliptic boundary conditions in the sense of Lopatinsky-Shapiro, the term $\partial \Sigma_{t_\pm}$ in \eqref{support} is absent, see Proposition~3.3 in \cite{GM}.
The following example will show that we need it for nonlocal boundary conditions in general.
This means that a signal which hits the timelike boundary somewhere instantenously emanates across the boundary of the corresponding Cauchy hypersurface and radiates further starting from there.
This way, signals can spread at a speed bigger than that of light which raises questions about the physical interpretation.
One conclusion might be that nonlocal boundary conditions for the Dirac equation are simply unphysical.
However, in general, there are topological obstructions against local elliptic boundary conditions for the Dirac operator, see Subsection~II.7.B in \cite{Booss} or Section~II.6 in \cite{LM}.
When these are nontrivial, this would mean that there are no physical boundary conditions at all.

\begin{bsp}\label{trans:min}
	Let $(M,g)=(\mathbb{R}\times[0,1],g_\mathrm{Min}=-\dt{t}^2+\dt{x}^2)$ be a vertical strip in Minkowski space.
	It has timelike boundary $\partial M = \mathbb{R}\times\{0,1\}$.
	We identify the spinor bundle $SM$ with the trivial bundle $M\times\mathbb{C}^2$.
	We impose the Lorentzian transmission conditions discussed in Example~\ref{Lor:trans}.
	For simplicity, we consider the homogeneous initial value problem
	\begin{align}\label{CP:trans}
	\begin{cases}
	D\psi=0, & \\
	\psi(0,x)=\psi_0(x), & \\
	\psi(t,0)=\psi(t,1),
	\end{cases}
	\end{align}
	where $\psi_0(x)\in C^\infty_\c((0,1),\mathbb{C}^2)$ is a prescribed function.
	The solution is explicitly given by
	\[
	\psi(t,x)=\frac{ 1}{2}\sum_{k=-\infty}^\infty\left[\Mat{1,-1;-1,1}\psi_0(x+k+t)+\Mat{1,1;1,1}\psi_0(x+k-t)\right].
	\]
	Note that the sum is locally finite, so there is no convergence issue.
	Now, if the distance of the support of $\psi_0$ to $0$ is different from the one to $1$, then the term $\partial \Sigma_{t_\pm}$ in \eqref{support} cannot be dispensed with, see Figure~\ref{support:property:bsp}.
	\begin{figure}[h]
		\begin{overpic}[scale=0.7]{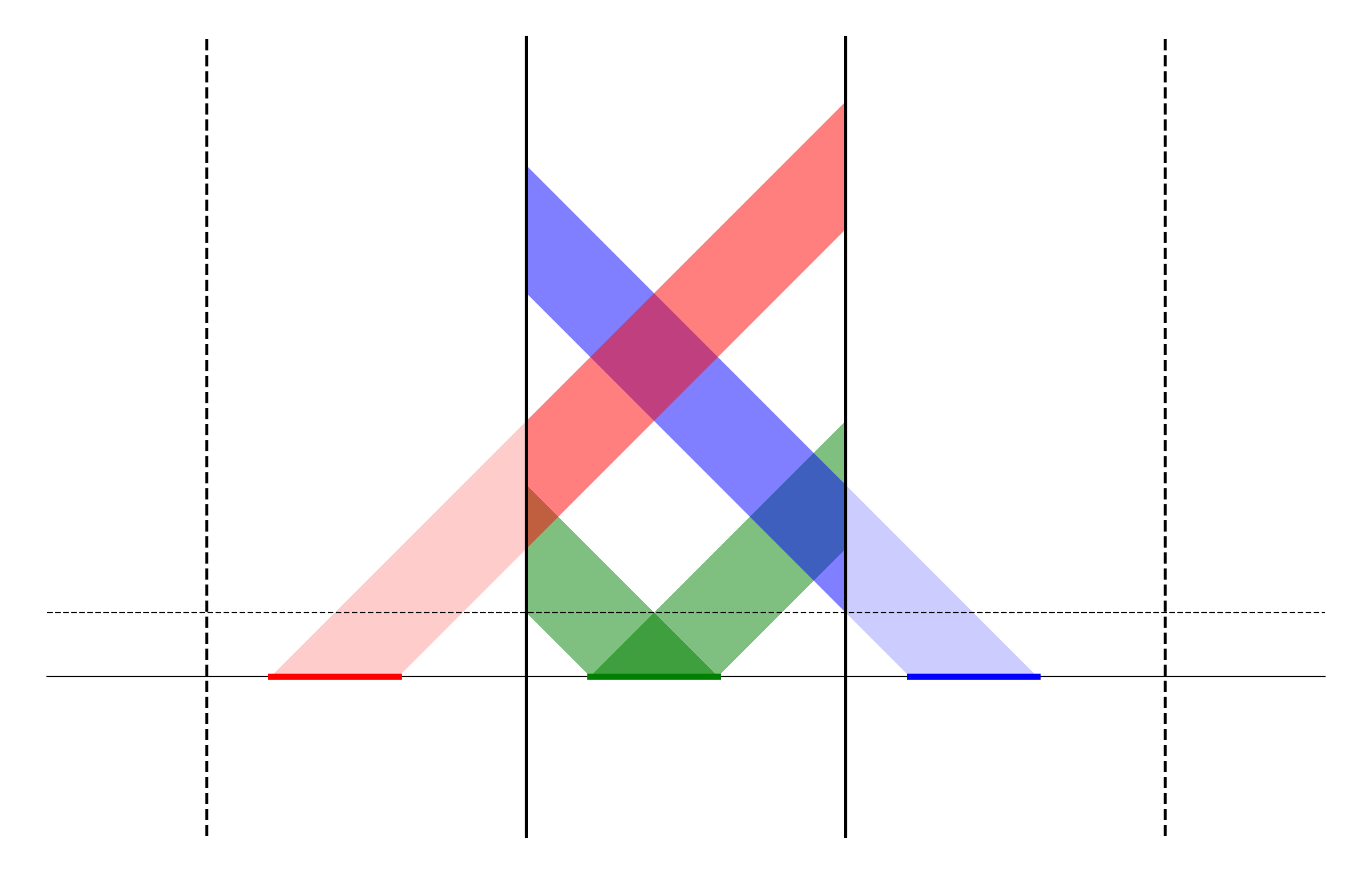}
		\put (42,11) {\textcolor[rgb]{0,.5,0}{$\operatorname{supp}(\psi_0)$}}
		\put(32.5,-1.9) {$\{x=0\}$}
		\put (56,-1.9) {$\{x=1\}$}
		\put (98,13.3) {$\{t=0\}$}
		\put (98,18) {$\{t=t_+\}$}
		\end{overpic}
		\caption{Support of solution to \eqref{CP:trans}.}
		\label{support:property:bsp}
	\end{figure}
\end{bsp}

\subsection{Green's operators}

We next use the wellposedness of the Cauchy problem~\eqref{CP:1} together with Proposition~\ref{prop:support} to construct advanced and retarded Green's operators.
These operators depend on the choice of Cauchy temporal function and on the boundary conditions.
We set
\begin{align*}
C^\infty_{B,\cc}(&M,SM)\\
&:=
\big{\{}\psi\in C^\infty_\c(M,SM);~D\psi\in C^\infty_{\cc}(M,SM)~\text{and}~ \psi|_{\partial M}\in C^\infty(\partial M,B)\big{\}}.
\end{align*}

\begin{prop} \label{Green-func}
	Assume the Standard Setup~\ref{ssfirst}--\ref{sslast} and let $B=\{B_t\}_{t\in \mathbb{R}}$ be an admissible boundary condition.
	
	Then there exist unique linear maps $G_{\pm}\colon C^\infty_{\cc}(M,SM)\to C^\infty_{B,\cc}(M,SM)$ such that
	\begin{enumerate}[label=(\arabic*), labelwidth=!, labelindent=2pt, leftmargin=21pt]
		\item \label{Green:1} $DG_\pm=\id_{C^\infty_{\cc}}$,
		\item \label{Green:2} $G_\pm D=\id_{C^\infty_{B,\cc}}$, and
		\item \label{Green:3} $\operatorname{supp}(G_\pm f)\subseteq J_\pm(\operatorname{supp}(f) \cup \partial\Sigma_{t_\pm})$.
	\end{enumerate}
	Here
	\begin{align*} 
		t_+&=\min  T\big(J_+(\operatorname{supp}(f))\cap\partial M\big), \\
		t_-&=\max  T\big(J_-(\operatorname{supp}(f))\cap\partial M\big) .
	\end{align*}
\end{prop}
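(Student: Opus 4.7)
The natural plan is to construct $G_\pm$ as solution operators for the Cauchy problem~\eqref{CP:1} with zero initial data imposed on an appropriately chosen Cauchy slice. For $f\in C^\infty_{\cc}(M,SM)$, I would pick $t_0\in\mathbb{R}$ with $\operatorname{supp}(f)\subseteq T^{-1}((t_0,\infty))$, shift the reference Cauchy hypersurface from $\Sigma_0$ to $\Sigma_{t_0}$ (as allowed by Remark~\ref{remarks}(a)), and apply Theorem~\ref{main} to $(\psi_0,f)=(0,f)$ to obtain the unique smooth solution $\psi$ satisfying $\psi|_{\Sigma_{t_0}}=0$ and $\psi|_{\partial M}\in C^\infty(\partial M,B)$. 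I then set $G_+f:=\psi$, and define $G_-$ analogously with $\Sigma_{t_0}$ strictly to the future of $\operatorname{supp}(f)$.

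The next step is to verify independence of $t_0$ together with the support property~(3). For any $t_0'<t_0$ satisfying the same condition, the corresponding solution $\psi'$ solves the homogeneous Cauchy problem on $M_{[t_0',t_0]}$, so Corollary~\ref{uniqueness} forces $\psi'\equiv 0$ there; hence $\psi'|_{\Sigma_{t_0}}=0$, and the uniqueness part of Theorem~\ref{main} applied on $M_{[t_0,\infty)}$ identifies $\psi'$ with $\psi$, so $G_+f$ is well defined. Plugging $(\psi_0,f)=(0,f)$ into Proposition~\ref{prop:support} yields $\operatorname{supp}(G_+f)\cap J_+(\Sigma_{t_0})\subseteq J_+(\operatorname{supp}(f)\cup\partial\Sigma_{t_+})$, where $t_+$ coincides with the one in the statement of Proposition~\ref{Green-func} because $\operatorname{supp}(f)\subseteq J_+(\Sigma_{t_0})$ makes $\operatorname{supp}(f)\cap J_+(\Sigma_{t_0})=\operatorname{supp}(f)$. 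Combined with the vanishing on $M_{(-\infty,t_0)}$, this gives~(3); together with $D(G_+f)=f\in C^\infty_{\cc}$ and the built-in boundary condition, it shows $G_+f\in C^\infty_{B,\cc}(M,SM)$.

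Property~(1) is immediate from the construction. For~(2), given $\psi\in C^\infty_{B,\cc}(M,SM)$ I would pick $t_0$ with $\operatorname{supp}(\psi)\subseteq T^{-1}((t_0,\infty))$ so that $\psi|_{\Sigma_{t_0}}=0$; since $\psi$ itself then solves the Cauchy problem with source $D\psi\in C^\infty_{\cc}$, trivial initial data on $\Sigma_{t_0}$, and boundary condition $B$, the uniqueness in Theorem~\ref{main} yields $\psi=G_+(D\psi)$. Uniqueness of $G_+$ as an operator follows in two lines: any $\tilde G_+$ satisfying~(1)--(3) sends $f$ to an element of $C^\infty_{B,\cc}$ with $D\tilde G_+f=f$, so property~(2) for $G_+$ combined with property~(1) for $\tilde G_+$ gives $G_+f=G_+(D\tilde G_+f)=\tilde G_+f$. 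I expect the only point requiring care to be bookkeeping around the auxiliary slice $\Sigma_{t_0}$, in particular matching the $t_\pm$ in Proposition~\ref{prop:support} to those in the present statement and making sure that the backward vanishing used throughout rests on Corollary~\ref{uniqueness} rather than on the support statement itself, so as to avoid any circularity.
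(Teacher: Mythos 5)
Your construction is essentially the paper's proof: define $G_\pm f$ as the solution of the Cauchy problem~\eqref{CP:1} with zero initial data on a slice $\Sigma_{t'}$ strictly in the past (resp.\ future) of $\operatorname{supp}(f)$, verify independence of the choice of slice, read off property~(3) from Proposition~\ref{prop:support} with $\psi_0=0$, and obtain~(2) and the uniqueness statements from uniqueness of the Cauchy problem. The only cosmetic deviations --- establishing slice-independence via the energy estimate/Corollary~\ref{uniqueness} on the band $M_{[t_0',t_0]}$ instead of invoking the support proposition, and deducing uniqueness of $G_+$ algebraically from (1) and (2) rather than from (3) combined with Cauchy uniqueness --- are equally valid and do not change the argument.
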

\begin{proof}
	We do the proof for $G_+$, the argument for $G_-$ being the same.
	\smallskip
	
	\emph{Existence.}
	Let $f\in C^\infty_{\cc}(M,SM)$.
	Choose $t'< T(\operatorname{supp}(f))$.
	Then $\operatorname{supp}(f)\subseteq I_+(\Sigma_{t'})$. 
	Now let $G_+f=\psi$ be the solution to the Cauchy problem
	\begin{align}\label{CP-Green-func}
	\begin{cases}
	D\psi=f\in C^\infty_{\cc}(M,SM), &\\
	\psi|_{\Sigma_{t'}}=0, & \\
	\psi|_{\partial M}\in C^\infty(\partial M, B).
	\end{cases}
	\end{align}
	Then Proposition~\ref{prop:support} implies that 
	\[
	\operatorname{supp}(G_+ f)\subseteq J_+(\operatorname{supp}(f)\cup \partial\Sigma_{t_+}).
	\]
	In particular, if $t'' < T(\operatorname{supp}(f))$, then $(G_+f)|_{\Sigma_{t''}}=0$.
	Hence one can replace $t'$ by $t''$ in \eqref{CP-Green-func}, i.e.\ the definition of $G_+$ is independent of the choice of $t'$.
	
	Property~\ref{Green:1} is satisfied by construction, so it is only left to show Property~\ref{Green:2}.
	Let $\tilde{\psi}\in C^\infty_{B,\cc}(M,SM)$ and put $f:=D\tilde{\psi}$.
	Choose $t'< T(\operatorname{supp}(\tilde{\psi}))$.
	Then we also have $t'< T(\operatorname{supp}(f))$.
	
	Now, $\psi=G_+ f$ is the unique solution to the Cauchy problem~\eqref{CP-Green-func} with $\psi|_{\Sigma_{t'}}=0$.
	Thus we have $D\psi=f=D\tilde{\psi}$, $\psi|_{\Sigma_{t'}}=\tilde{\psi}|_{\Sigma_{t'}}=0$, and $\psi|_{\partial M},\tilde{\psi}|_{\partial M}\in  C^\infty(\partial M, B)$.
	Therefore $\tilde{\psi}-\psi$ solves the homogeneous Cauchy problem~\eqref{CP-Green-func} with $f=0$.
	By uniqueness, $\tilde{\psi}-\psi=0$, i.e.\ $G_+D\tilde{\psi}=G_+f=\psi=\tilde{\psi}$.
	
	\smallskip
	\emph{Uniqueness.}
	Let $G_+, \tilde{G}_+\colon C^\infty_{\cc}(M,SM)\to C^\infty_{B,\cc}(M,SM)$ be two linear operators satisfying \ref{Green:1}--\ref{Green:3}.
	Let $f\in C^\infty_{\cc}(M,SM)$.
	Then $\psi:=G_+f - \tilde{G}_+f$ solves the homogeneous Cauchy problem~\eqref{CP-Green-func} for any $t'<T(\operatorname{supp}(f))$.
	Thus $\psi=0$, i.e.\ $G_+f = \tilde{G}_+f$.
\end{proof}

Following the usual terminology, we call $G_+$ the \emph{retarded Green's operator} and $G_-$ the \emph{advanced Green's operator}.

\section{Pseudo Local Boundary Conditions}
\label{sec:examples}

In this section we focus on pseudolocal boundary conditions and see how to ensure the properties required in Definition~\ref{admissibleBound}.
We will first consider Grassmannian projections as introduced in Definition~\ref{Grassmannian:Def} and then apply this to the famous Atiyah-Patodi-Singer conditions.

\subsection{Grassmannian projections}

We start by showing that Grassmannian projections that have a few natural properties lead to admissible boundary conditions.

\begin{satz}\label{main:Grassmannian}
	Assume the Standard Setup~\ref{ssfirst}--\ref{sslast}.
	Let $\{P_t\}_{t\in\mathbb{R}}$ be a family of selfadjoint pseudodifferential projections on $L^2(\partial\Sigma_t,SM|_{\partial\Sigma_t})$ such that
	\begin{enumerate}[label=(\alph*), labelwidth=!, labelindent=2pt, leftmargin=21pt]
		\item \label{Grass:1} $N^{\nicefrac 12}(t,\cdot)P_t=P_t N^{\nicefrac 12}(t,\cdot)$,
		\item \label{Grass:2} $P_t=\id+\sigma_{D_t}(\eta_t^\flat)P_t \sigma_{D_t}(\eta_t^\flat)$,
		\item \label{Grass:3} $\tilde{P}_t:=U(t) P_t  U(t)^{-1}$ is a Grassmannian projection on $L^2(\hat{\Sigma}_0,SM|_{\partial\hat{ \Sigma}_0})$, and
		\item \label{Grass:4} $t\mapsto \tilde{P}_t$ is $H^{s}$-norm continuous for every $s\in\mathbb{N}_0$.
	\end{enumerate}
	Then $B:=\{B_t\}_{t\in\mathbb{R}}$ with $B_t:=P_tH^\frac{1}{2}(\partial\Sigma_t,SM|_{\partial\Sigma_t})$ is an admissible boundary condition.
\end{satz}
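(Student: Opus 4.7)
The plan is to simply verify the three conditions of Definition~\ref{admissibleBound} by invoking the preparatory results from Section~\ref{EllipticBoundary} and Section~\ref{ConFunCal}, exploiting the compatibility assumptions \ref{Grass:1}--\ref{Grass:4}.

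\textbf{Step 1 (Condition \ref{admissibleBound1}).}
I would first observe that multiplication by the smooth positive function $N^{\nicefrac12}(t,\cdot)$ is an isomorphism of $H^{\nicefrac12}(\partial\Sigma_t,SM|_{\partial\Sigma_t})$, so using commutativity assumption \ref{Grass:1},
\[
N^{\nicefrac12}(t,\cdot)B_t
= N^{\nicefrac12}(t,\cdot)P_tH^{\nicefrac12}
= P_tN^{\nicefrac12}(t,\cdot)H^{\nicefrac12}
= P_tH^{\nicefrac12} = B_t.
\]
Since $P_t$ is a selfadjoint classical pseudodifferential projection satisfying \ref{Grass:2}, which is exactly property \ref{Grassmannian2} of Definition~\ref{Grassmannian:Def} with $\ngm=\eta_t^\flat$, Corollary~\ref{Grassmannian:cor1} (applied to $D_t$ on $\Sigma_t$) gives that $B_t = N^{\nicefrac12}(t,\cdot)B_t$ is a selfadjoint boundary condition for $D_t$.

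\textbf{Step 2 (Condition \ref{admissibleBound2}).}
Iterating assumption \ref{Grass:1} yields $N^{\nicefrac n2}(t,\cdot)P_t = P_tN^{\nicefrac n2}(t,\cdot)$, and again because multiplication by $N^{\nicefrac n2}(t,\cdot)$ preserves $H^{\nicefrac12}$, we get
\[
\tilde{B}_t = U(t)N^{\nicefrac n2}(t,\cdot)P_tH^{\nicefrac12}(\partial\Sigma_t,SM|_{\partial\Sigma_t})
= U(t)P_tU(t)^{-1}\,U(t)H^{\nicefrac12}
= \tilde{P}_t H^{\nicefrac12}(\partial\hat{\Sigma}_0,S\hat{M}|_{\partial\hat{\Sigma}_0}).
\]
By assumption \ref{Grass:3}, $\tilde{P}_t$ is a Grassmannian projection for $\tilde{D}_t$ in the sense of Definition~\ref{Grassmannian:Def}, so Corollary~\ref{Grassmannian:cor2} immediately yields that $\tilde{B}_t$ is an $\infty$-regular selfadjoint boundary condition for $\tilde{D}_t$.

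\textbf{Step 3 (Condition \ref{admissibleBound3}).}
Here I would feed the setup into Lemma~\ref{HkContinuity}. The family $\{\tilde{D}_t\}_{t\in\mathbb{R}}$ is a family of formally selfadjoint Dirac-type operators on the compact Riemannian manifold $\hat{\Sigma}_0$ with coefficients depending smoothly on $t$ (this was established at the end of Section~\ref{CauchyId}), and by Step~2 the corresponding boundary conditions are exactly $\tilde{B}_t = \tilde{P}_t(H^{\nicefrac12})$ for the Grassmannian projections $\tilde{P}_t$. Assumption \ref{Grass:4} is precisely the $H^s$-norm continuity hypothesis of Lemma~\ref{HkContinuity}. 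Applying that lemma (with the roles of $D_t$, $P_t$, $B_t$ played by $\tilde{D}_t$, $\tilde{P}_t$, $\tilde{B}_t$) gives that $t\mapsto \tilde{D}_{t,\tilde{B}_t}^l J_{\tilde{B}_t}^{(\varepsilon)}$ is $H^k$-norm continuous for all $k,l\in\mathbb{N}_0$ and $\varepsilon>0$; norm continuity is stronger than strong continuity, so condition \ref{admissibleBound3} follows.

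Since this is a direct verification against established corollaries and Lemma~\ref{HkContinuity}, there is no real obstacle: the only slightly delicate point is confirming that the algebraic identities $N^{\nicefrac12}B_t = B_t$ and $\tilde{B}_t = \tilde{P}_t(H^{\nicefrac12})$ hold, which reduces to the commutation relation \ref{Grass:1} plus the elementary fact that multiplication by a smooth positive function preserves Sobolev spaces on a closed manifold.
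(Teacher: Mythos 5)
Your proposal is correct and follows essentially the same route as the paper: Corollary~\ref{Grassmannian:cor1} together with the commutation relation~\ref{Grass:1} gives condition~\ref{admissibleBound1}, Corollary~\ref{Grassmannian:cor2} applied to $\tilde P_t$ after identifying $\tilde B_t=\tilde P_t H^{\nicefrac12}$ gives condition~\ref{admissibleBound2}, and Lemma~\ref{HkContinuity} with assumption~\ref{Grass:4} gives condition~\ref{admissibleBound3}. Your write-up just makes the algebraic identities $N^{\nicefrac12}B_t=B_t$ and $U(t)N^{\nicefrac n2}B_t=\tilde P_t H^{\nicefrac12}$ explicit, which the paper leaves implicit.
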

\begin{proof}
	Assumption~\ref{Grass:2} and $P_t=P_t^*$ imply by Corollary~\ref{Grassmannian:cor1} that $B_t$ is selfadjoint for $D_t$.
	By Assumption~\ref{Grass:1}, $B_t=N(t)^{\nicefrac 12}B_t$ which yields \ref{admissibleBound1} in Definition~\ref{admissibleBound}.
	Assumption~\ref{Grass:3} implies by Corollary~\ref{Grassmannian:cor2} that $U(t)B_t$ is a selfadjoint $\infty$-regular boundary condition for $\tilde{D}_t$.
	This is \ref{admissibleBound2} in Definition~\ref{admissibleBound} because $U(t)B_t = U(t)N(t)^{\nicefrac n2}B_t=\tilde{B}_t$.
	Furthermore, Assumption~\ref{Grass:4} implies Assertion~\ref{admissibleBound3} in Definition~\ref{admissibleBound} by Lem\-ma~\ref{HkContinuity}.
	\end{proof}
Under the assumptions of Theorem~\ref{main:Grassmannian}, Theorem~\ref{main} implies the well-posedness of Cauchy problem~\eqref{CP:1}.
\begin{bem}
	Condition~\ref{Grass:4} can be replaced by the weaker requirement that $t\mapsto \tilde{P}_t$ is just $L^2$-norm continuous provided we can choose the boundary operators $\tilde{A}_t$ for $\tilde{D}_t$ such that they commute or anticommute with $\tilde{P}_t$.
	In this case, Lemma~\ref{APS-continuity} ensures that $t\mapsto \tilde{D}_{t,\tilde{B}_t}J^{(\varepsilon)}_{\tilde{B}_t}$ is $H^k$-norm continuous for all $k\in\mathbb{N}_0$.
\end{bem}

By adding additional assumptions on the geometry of the spacetime, we can simplify the assumptions on the pseudodifferential projections as follows:
\begin{koro}\label{koro1:Grassmannian}
	Assume the Standard Setup~\ref{ssfirst}--\ref{sslast}.
	Let $\{P_t\}_{t\in\mathbb{R}}$ be a family of selfadjoint Grassmannian projections on $L^2(\partial\Sigma_t,SM|_{\partial\Sigma_t})$ such that $t\mapsto \tilde{P}_t=U(t)P_tU(t)^{-1}$ is $H^{s}$-norm continuous for all $s\in\mathbb{N}_0$.

	If, in addition, $\eta$ is parallel along the integral curves of $\partial_t$ and $N|_{\partial M}$ depends only on $t$, then $B=\{B_t\}_{t\in\mathbb{R}}$ with $B_t=P_tH^\frac{1}{2}(\partial\Sigma_t,SM|_{\partial\Sigma_t})$ is an admissible boundary condition.
\end{koro}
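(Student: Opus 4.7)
The plan is to apply Theorem~\ref{main:Grassmannian}, so I need to verify its four hypotheses \ref{Grass:1}--\ref{Grass:4} for the given family $\{P_t\}$. Conditions \ref{Grass:1} and \ref{Grass:4} are essentially immediate: \ref{Grass:4} is precisely what we assumed, and for \ref{Grass:1}, the extra hypothesis that $N|_{\partial M}$ depends only on $t$ implies that, at each fixed $t$, $N^{\nicefrac 12}(t,\cdot)|_{\partial\Sigma_t}$ is a positive scalar constant, which automatically commutes with $P_t$. Condition \ref{Grass:2} is just property \ref{Grassmannian2} of Definition~\ref{Grassmannian:Def} applied to the Grassmannian projection $P_t$ for $D_t$, and is part of the hypothesis that each $P_t$ is Grassmannian.

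The substantive step is to verify \ref{Grass:3}, namely that $\tilde{P}_t = U(t)P_tU(t)^{-1}$ is itself a Grassmannian projection on $L^2(\partial\hat{\Sigma}_0,S\hat{M}|_{\partial\hat{\Sigma}_0})$ with respect to $\tilde{D}_t$. I check the three defining properties of Definition~\ref{Grassmannian:Def} in turn. Being a classical pseudodifferential projection is preserved under conjugation by the diffeomorphism underlying $U(t)$. Selfadjointness follows because $N|_{\partial M}$ depends only on $t$, so the $L^2$-inner products on $\partial\Sigma_t$ coming from $g_t$ and from $\hat{g}_t$ differ by a fixed positive scalar factor; hence $P_t^*=P_t$ passes to the $\hat{g}$-$L^2$ space, and then the isometry $U(t)|_{\partial\hat{\Sigma}_t}$ transfers selfadjointness to $L^2(\partial\hat{\Sigma}_0, S\hat{M}|_{\partial\hat{\Sigma}_0})$.

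For the Clifford-type relation $\tilde{P}_t = \id + \sigma_{\tilde{D}_t}(\hat{\eta}(0)^\flat)\tilde{P}_t\sigma_{\tilde{D}_t}(\hat{\eta}(0)^\flat)$, I conjugate the identity $P_t = \id + \sigma_{D_t}(\eta_t^\flat)P_t\sigma_{D_t}(\eta_t^\flat)$ by $U(t)$ and invoke the transformation rule from Remark~\ref{boprincipalsymbol}. Precisely under the two additional geometric assumptions, one has $\tau_t^{-1}\hat{\eta}(0)^\flat$ proportional to $\eta(t)^\flat$, which yields that $U(t)\sigma_{D_t}(\eta(t)^\flat)U(t)^{-1}$ and $\sigma_{\tilde{D}_t}(\hat{\eta}(0)^\flat)$ differ only by a positive scalar factor. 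Since both operators square to $-\id$ by the Clifford relations, that scalar has square equal to $1$ and therefore cancels when it appears twice in the conjugated relation, yielding exactly \ref{Grassmannian2} for $\tilde{P}_t$.

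For the Fredholm condition \ref{Grassmannian3}, Remark~\ref{boprincipalsymbol} permits the choice $\tilde{A}_t = N(0)U(t)A_tU(t)^{-1}$, and functional calculus then gives $\chi^+(\tilde{A}_t) = U(t)\chi^+(A_t)U(t)^{-1}$. Hence $\tilde{P}_t - \chi^+(\tilde{A}_t) = U(t)\bigl(P_t-\chi^+(A_t)\bigr)U(t)^{-1}$, which is Fredholm because $U(t)$ is an isomorphism and $P_t$ is Grassmannian by assumption. I expect the main obstacle to lie in the careful bookkeeping of conformal and parallel-transport factors in the principal-symbol computation used for the second Grassmannian property; the key point is that the Clifford identity $\sigma^2=-\id$ forces those scalar factors to disappear upon being squared.
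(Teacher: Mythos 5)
Your proposal is correct and follows essentially the same route as the paper: both proofs reduce the statement to checking hypotheses \ref{Grass:1}--\ref{Grass:4} of Theorem~\ref{main:Grassmannian}, using that $N|_{\partial M}$ depends only on $t$ for \ref{Grass:1}, the definition of a Grassmannian projection for \ref{Grass:2}, the continuity assumption for \ref{Grass:4}, and the parallelism of $\eta$ (via Remark~\ref{boprincipalsymbol}) for \ref{Grass:3}. You merely spell out in detail the verification that $\tilde{P}_t$ is again a Grassmannian projection, which the paper asserts in a single sentence, and your symbol and functional-calculus bookkeeping there is sound.
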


\begin{proof}
	Since $N|_{\partial M}$ depends only on $t$, Condition~\ref{Grass:1} in Theorem~\ref{main:Grassmannian} holds.
	Condition~\ref{Grass:2} is part of the definition of a Grassmannian projection and Condition~\ref{Grass:4} is assumed.
	Since $\eta$ is parallel along the integral curves of $\partial_t$, we see that $\tilde{P}_t$ is a Grassmannian projection as well, which is Condition~\ref{Grass:3}.
\end{proof}

\subsection{Atiyah-Patodi-Singer conditions}

We now use the previous section to discuss the Atiyah-Patodi-Singer boundary conditions.
We assume the Standard Setup \ref{ssfirst}--\ref{sslast} and, additionally, that the boundary operators $A_t$ have trivial kernel.
As noted in Example~\ref{bsp_elliptic}, the Atiyah-Patodi-Singer conditions
\[
B_{\APS,t}:=\chi^-(A_t)H^{\frac 12}(\partial\Sigma,SM|_{\partial\Sigma_t})
\]
are selfadjoint boundary conditions for $D_t$.
We obtain the following well-posedness result:

 \begin{koro}\label{main-APS}
 	Assume the Standard Setup~\ref{ssfirst}--\ref{sslast} and additionally assume that $N|_{\partial M}$ depends only on $t$, that $\eta$ is parallel along the integral curves of $\partial_t$ and that the kernel of $A_t$ is trivial for every $t\in\mathbb{R}$.

 	Then, given $f\in C^\infty_{\cc}(M,SM)$ and $\psi_0\in C^\infty_{\cc}(\Sigma_0,SM|_{\Sigma_0})$ there exists a unique smooth solution $\psi\in C^\infty(M,SM)$ to
 	\begin{align*} 
 		\begin{cases}
 			D\psi=f, &\\
 			\psi|_{\Sigma_0}=\psi_0, & \\
 			\psi|_{\partial M}\in C^\infty(\partial M, B_{\APS}).
 		\end{cases}
 	\end{align*}
 	The solution depends continuously on the Cauchy data $(f,\psi_0)$.
 \end{koro}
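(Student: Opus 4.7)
The plan is to exhibit the APS spectral projections $\chi^-(A_t)$ as a family of Grassmannian projections meeting the hypotheses of Theorem~\ref{main:Grassmannian} (with its condition~\ref{Grass:4} replaced by the weaker variant stated in the remark immediately following that theorem), so that $B_{\APS}=\{B_{\APS,t}\}_{t\in\mathbb{R}}$ is an admissible boundary condition; Theorem~\ref{main} then delivers existence, uniqueness, smoothness and continuous dependence on the Cauchy data. The two geometric assumptions ($N|_{\partial M}$ depending only on $t$, and $\eta$ parallel along $\partial_t$) are exactly what is needed to transport the Grassmannian structure from $A_t$ to $\tilde A_t$ via Remark~\ref{boprincipalsymbol}.

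First I would verify that $P_t:=\chi^-(A_t)$ is a Grassmannian projection on $L^2(\partial\Sigma_t,SM|_{\partial\Sigma_t})$. Selfadjointness is clear. For the algebraic identity $P_t=\id+\sigma_{D_t}(\eta_t^\flat)P_t\sigma_{D_t}(\eta_t^\flat)$, recall from the Standard Setup that $\sigma_{D_t}(\eta_t^\flat)$ anticommutes with $A_t$ and therefore maps $E_\lambda(A_t)$ to $E_{-\lambda}(A_t)$; combined with $\sigma_{D_t}(\eta_t^\flat)^2=-\id$ (Clifford relations applied to a unit vector) and the hypothesis $\ker A_t=\{0\}$, a direct check on each eigenspace $E_\lambda(A_t)$ gives the identity. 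The Fredholm condition is trivial since $\chi^-(A_t)-\chi^+(A_t)=\id-2\chi^+(A_t)$ squares to $\id$ and is therefore invertible.

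Next I would pass to the identified hypersurface. Under the assumptions that $N|_{\partial M}$ depends only on $t$ and that $\eta$ is parallel along the integral curves of $\partial_t$, Remark~\ref{boprincipalsymbol} allows us to choose $\tilde A_t=N(0)\,U(t)A_t U(t)^{-1}$; this is formally selfadjoint with smoothly $t$-dependent coefficients and anticommutes with $\sigma_{\tilde D_t}(\hat\eta(0)^\flat)$. Consequently $\tilde P_t=U(t)\chi^-(A_t)U(t)^{-1}=\chi^-(\tilde A_t)$, and the same eigenspace argument as above shows $\tilde P_t$ is a Grassmannian projection for $\tilde D_t$. Furthermore $\tilde P_t$ commutes with $\tilde A_t$, so the remark following Theorem~\ref{main:Grassmannian} applies and only $L^2$-norm continuity of $t\mapsto\tilde P_t$ is needed. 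Because the coefficients of $\tilde A_t$ depend smoothly on $t$ and $\ker\tilde A_t=\{0\}$, this $L^2$-continuity follows from Remark~\ref{bem-APS-cont}.

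Finally, property~\ref{Grass:1} of Theorem~\ref{main:Grassmannian} is automatic since $N|_{\partial M}$ is scalar and depends only on $t$, hence commutes with $P_t$. All hypotheses of the (weakened) Theorem~\ref{main:Grassmannian} are therefore fulfilled, so $B_{\APS}$ is an admissible boundary condition and Corollary~\ref{main-APS} follows from Theorem~\ref{main}. I expect the only genuinely substantive step to be the algebraic identity verifying that $\chi^-(A_t)$ satisfies property~\ref{Grassmannian2} of Definition~\ref{Grassmannian:Def}; everything else is essentially bookkeeping that translates the APS data into the Grassmannian framework of Section~\ref{sec:examples}.
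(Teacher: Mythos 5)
Your proposal is correct and follows essentially the same route as the paper: reduce to the Grassmannian framework of Section~\ref{sec:examples} via Remark~\ref{boprincipalsymbol} (choosing $\tilde{A}_t=N(0)U(t)A_tU(t)^{-1}$, so that $\tilde{P}_t=\chi^-(\tilde A_t)$ commutes with $\tilde A_t$), obtain the required continuity from Remark~\ref{bem-APS-cont} together with Lemma~\ref{APS-continuity}, conclude admissibility, and apply Theorem~\ref{main}. The only cosmetic difference is that you invoke the remark following Theorem~\ref{main:Grassmannian} (with explicit verification of the eigenspace identity $\chi^-(A_t)=\id+\sigma_{D_t}(\eta_t^\flat)\chi^-(A_t)\sigma_{D_t}(\eta_t^\flat)$, which the paper leaves implicit via Example~\ref{bsp_elliptic}), whereas the paper routes the same facts through Corollary~\ref{koro1:Grassmannian}.
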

 
\begin{proof}
Since $N|_{\partial \Sigma_t}$ is constant, $N(t)^{\nicefrac 12}B_{\APS,t}=B_{\APS,t}$ is a selfadjoint $\infty$-regular boundary condition for $D_t$.
Thus $\tilde{B}_t=U(t)N^{\nicefrac n2}(t)B_{\APS,t} = U(t)B_{\APS,t}$ is a selfadjoint $\infty$-regular boundary condition for $\tilde{D}_t=N(t)U(t)D_tU(t)^{-1}$.
By Remark~\ref{boprincipalsymbol}, we can choose the boundary operator $\tilde{A}_t$ for $\tilde{D}_t$ as
\[
\tilde{A}_t=N(0)U(t)A_tU(t)^{-1}.
\]
Since $\chi^-(A_t)$ commutes with $A_t$, this is also true for $\tilde{A}_t$ and $\tilde{P}_t$.
Now Lemma~\ref{APS-continuity} together with Remark~\ref{bem-APS-cont} shows that $t\mapsto \tilde{P}_t$ is $H^s$-norm continuous for every $s\in\mathbb{N}_0$.
Corollary~\ref{koro1:Grassmannian} implies that $\{B_{\APS,t}\}_t$ is an admissible boundary condition and Theorem~\ref{main-APS} concludes the proof.
\end{proof}

Corollary~\ref{main-APS} is a slight generalization of the result stated in \cite{DGM}.

\begin{bibdiv}
	\begin{biblist}
		\bib{AFS}{article}{
			author = {Ak{\'e}, Luis},
			author ={Flores, Jos{\'e} L.},
			author={S{\'a}nchez, Miguel},
			issn = {0213-2230},
			journal = {Rev. Mat. Iberoam.},
			number = {1},
			pages = {45--94},
			title = {Structure of globally hyperbolic spacetimes-with-timelike-boundary},
			url = {https://doi.org/10.4171/rmi/1201},
			volume = {37},
			year = {2021}
		}

		\bib{Ammon-Erdmenger}{book}{
			author = {{Ammon}, Martin},
			author={{Erdmenger}, Johanna},
			isbn = {978-1-107-01034-5; 978-0-511-84637-3},
			pages = {xiii + 533},
			publisher = {Cambridge: Cambridge University Press},
			title = {Gauge/gravity duality. {F}oundations and applications},
			year = {2015},
		}

	\bib{APS}{article}{
			author = {{Atiyah}, Michael F.},
			author={{Patodi}, Vijay K.},
			author={{Singer}, Isadore M.},
			issn = {0305-0041},
			journal = {{Math. Proc. Camb. Philos. Soc.}},
			pages = {43--69},
			publisher = {Cambridge University Press, Cambridge},
			title = {Spectral asymmetry and {R}iemannian geometry. {I}},
			volume = {77},
			year = {1975},
		}

\bib{Bachelot}{article}{
	author = {{Bachelot}, Alain},
	issn = {1631-073X},
	journal = {{C. R., Math., Acad. Sci. Paris}},
	number = {7-8},
	pages = {359--364},
	publisher = {Acad{\'e}mie des Sciences, Paris},
	title = {New boundary conditions on the time-like conformal infinity of the anti-de {S}itter universe},
	volume = {350},
	year = {2012},
}

\bib{spin}{book}{
	author={B\"{a}r, Christian},
	title={Spin geometry},
	year={2011},
	note={\href{https://www.math.uni-potsdam.de/fileadmin/user_upload/Prof-Geometrie/Dokumente/Lehre/Veranstaltungen/SS11/spingeo.pdf}{Lecture notes}},
}		
		
\bib{BGreen}{article}{
   author={B\"{a}r, Christian},
   title={Green-hyperbolic operators on globally hyperbolic spacetimes},
   journal={Comm. Math. Phys.},
   volume={333},
   date={2015},
   number={3},
   pages={1585--1615},
   issn={0010-3616},
   doi={10.1007/s00220-014-2097-7},
}
		
\bib{BB1}{incollection}{
	author = {B{\"a}r, Christian},
	author={Ballmann, Werner},
	booktitle = {Surveys in differential geometry. {V}ol. {XVII}},
	pages = {1--78},
	publisher = {Int. Press, Boston, MA},
	series = {Surv. Differ. Geom.},
	title = {Boundary value problems for elliptic differential operators of first order},
	url = {https://doi.org/10.4310/SDG.2012.v17.n1.a1},
	volume = {17},
	year = {2012}
}
		
\bib{BB2}{incollection}{
	author = {B{\"a}r, Christian},
	author={Ballmann, Werner},
	booktitle = {Arbeitstagung {B}onn 2013},
	pages = {43--80},
	publisher = {Birkh{\"a}user/Springer, Cham},
	series = {Progr. Math.},
	title = {Guide to elliptic boundary value problems for {D}irac-type operators},
	volume = {319},
	year = {2016}
}

\bib{BB3}{article}{
	author = {B{\"a}r, Christian},
	author={Bandara, Lashi},
	issn = {0022-1236},
	journal = {Journal of Functional Analysis},
	keywords = {Elliptic differential operators of first order; Elliptic boundary conditions; Boundary regularity; Fredholm property; -functional calculus; Maximal regularity; Rarita-Schwinger operator},
	number = {12},
	pages = {109445},
	title = {Boundary value problems for general first-order elliptic differential operators},
	url = {https://www.sciencedirect.com/science/article/pii/S0022123622000659},
	volume = {282},
	year = {2022}
}

\bib{BGM}{article}{
	author = {{B{\"a}r}, Christian},
	author={{Gauduchon}, Paul},
	author={ {Moroianu}, Andrei},
	issn = {0025-5874},
	journal = {{Math. Z.}},
	number = {3},
	pages = {545--580},
	publisher = {Springer, Berlin/Heidelberg},
	title = {Generalized cylinders in semi-{R}iemannian and spin geometry},
	volume = {249},
	year = {2005},
}

\bib{Waves}{book}{
	author = {{B{\"a}r}, Christian},
	author={ {Ginoux}, Nicolas},
	author={{Pf{\"a}ffle}, Frank},
	isbn = {978-3-03719-037-1},
	journal = {{ESI Lect. Math. Phys.}},
	pages = {viii + 194},
	publisher = {Z{\"u}rich: European Mathematical Society Publishing House},
	title = {Wave equations on {L}orentzian manifolds and quantization.},
	year = {2007},
}

\bib{BH}{incollection}{
	author = {{B{\"a}r}, Christian},
	author ={ {Hannes}, Sebastian},
	booktitle = {Geometry and physics. A festschrift in honour of Nigel Hitchin. Volume 1},
	isbn = {978-0-19-880201-3; 978-0-19-880200-6},
	pages = {3--18},
	publisher = {Oxford: Oxford University Press},
	title = {Boundary value problems for the {L}orentzian {D}irac operator},
	year = {2018},
}

\bib{BS2}{article}{,
	author = {B{\"a}r, Christian},
	author ={Strohmaier, Alexander},
	issn = {0010-3616},
	journal = {{Commun. Math. Phys.}},
	number = {3},
	pages = {703--721},
	publisher = {Springer, Berlin/Heidelberg},
	title = {A rigorous geometric derivation of the chiral anomaly in curved backgrounds},
	volume = {347},
	year = {2016},
}

\bib{BS1}{article}{
	author = {B{\"a}r, Christian},
	author={Strohmaier, Alexander},
	issn = {0002-9327},
	journal = {{Am. J. Math.}},
	number = {5},
	pages = {1421--1455},
	publisher = {Johns Hopkins University Press, Baltimore, MD},
	title = {An index theorem for {L}orentzian manifolds with compact spacelike Cauchy boundary},
	volume = {141},
	year = {2019},
}

\bib{BS3}{unpublished}{
	author = {B{\"a}r, Christian},
	author={Strohmaier, Alexander},
	title = {Local Index Theory for Lorentzian Manifolds},
	note = {\href{https://arxiv.org/abs/2104.01919}{arXiv:2012.01364}},
	year = {2022},
}

\bib{Baum}{book}{
	author = {{Baum}, Helga},
	issn = {0138-502X},
	journal = {{Teubner-Texte Math.}},
	language = {German},
	publisher = {Teubner, Leipzig},
	title = {Spin-{S}trukturen und {D}irac-{O}peratoren {\"u}ber pseudoriemannschen {M}annigfaltigkeiten},
	volume = {41},
	year = {1981},
}

\bib{Booss}{book}{
	Author = {Boo{\ss}-Bavnbek
		, Bernhelm},
	Author={Bleecker, David D.},
	Title = {Topology and analysis. {The} {Atiyah}-{Singer} index formula and gauge- theoretic physics. {Transl}. from the {German} by {D}. {D}. {Bleecker} and {A}. {Mader}},
	FSeries = {Universitext},
	Series = {Universitext},
	ISSN = {0172-5939},
	Year = {1985},
	Publisher = {Springer, Cham},
	Language = {English},
}

\bib{Lesch1}{article} {
	AUTHOR = {Boo{\ss}-Bavnbek, Bernhelm},
	author={Lesch, Matthias},
	author={Phillips,
		John},
	TITLE = {Unbounded {F}redholm operators and spectral flow},
	JOURNAL = {Canad. J. Math.},
	VOLUME = {57},
	YEAR = {2005},
	NUMBER = {2},
	PAGES = {225--250},
	ISSN = {0008-414X},
	URL = {https://doi.org/10.4153/CJM-2005-010-1},
}

\bib{BW} {book}{
	author = {{Boo{\ss}-Bavnbek}, Bernhelm},
	author={ {Wojciechowski}, Krzysztof P.},
	isbn = {0-8176-3681-1},
	pages = {xviii + 307},
	publisher = {Boston, MA: Birkh{\"a}user},
	title = {Elliptic boundary problems for {D}irac operators},
	year = {1993},
}

\bib{DDF}{article}{
	author = {{Dappiaggi}, Claudio},
	author={{Drago}, Nicol{\'o}},
	author={{Ferreira}, Hugo R. C.},
	issn = {0377-9017},
	journal = {{Lett. Math. Phys.}},
	number = {10},
	pages = {2157--2186},
	publisher = {Springer Netherlands, Dordrecht},
	title = {Fundamental solutions for the wave operator on static {L}orentzian manifolds with timelike boundary},
	volume = {109},
	year = {2019},
}

\bib{DDL}{article}{
	author = {{Dappiaggi}, Claudio},
	author={{Drago}, Nicol{\`o}},
	author={{Longhi}, Rubens},
	issn = {1424-0637},
	journal = {{Ann. Henri Poincar{\'e}}},
	number = {7},
	pages = {2367--2409},
	publisher = {Springer (Birkh{\"a}user), Basel},
	title = {On Maxwell's equations on globally hyperbolic spacetimes with timelike boundary},
	volume = {21},
	year = {2020},
}

\bib{DFH}{article}{
	author = {{Dappiaggi}, Claudio},
	author={{Ferreira}, Hugo R. C.},
	author={{Herdeiro}, Carlos A. R.},
	issn = {0370-2693},
	journal = {{Phys. Lett., B}},
	pages = {146--154},
	publisher = {Elsevier (North-Holland), Amsterdam},
	title = {Superradiance in the {B}{T}{Z} black hole with {R}obin boundary conditions},
	volume = {778},
	year = {2018},
}

\bib{DGM}{unpublished}{
	author = {Drago, Nicol{\`o}},
	author ={Gro{\ss}e, Nadine},
	author = {Murro, Simone},
	note = {\href{https://arxiv.org/abs/2104.00585}{arXiv:2104.00585}},
	title = {The {C}auchy problem of the {L}orentzian {D}irac operator with {A}{P}{S} boundary conditons},
	year = {2021}
}

\bib{Frances}{article}{
	author={Frances, Charles},
	title={The conformal boundary of anti-de {S}itter space-times. {A}dS/{C}{F}{T} {C}orrespondence: {E}instein {M}etrics and {T}heir {C}onformal {B}oundaries},
	journal={European Mathematical Society Publishing House},
	pages={205-216},
	year={2011},
	issue={0.4171/013-1/8￿. ￿hal-03195056},
}

\bib{GM}{article}{
	author = {Nicolas Ginoux},
	author={Simone Murro},
	title = {{On the Cauchy problem for Friedrichs systems on globally hyperbolic manifolds with timelike boundary}},
	volume = {27},
	journal = {Advances in Differential Equations},
	number = {7/8},
	publisher = {Khayyam Publishing, Inc.},
	pages = {497 -- 542},
	year = {2022},
}

\bib{GL}{article}{
	author = {Gromov, Mikhael},
	author={Lawson, H. Blaine},
	issn = {0073-8301},
	journal = {Inst. Hautes {\'E}tudes Sci. Publ. Math.},
	number = {58},
	pages = {83--196 (1984)},
	title = {Positive scalar curvature and the {D}irac operator on complete {R}iemannian manifolds},
	url = {http://www.numdam.org/item?id=PMIHES_1983__58__83_0},
	year = {1983}
}

\bib{MG}{article}{
	author = {{Große}, Nadine},
	author={{Murro}, Simone},
	issn = {1431-0635},
	journal = {{Doc. Math.}},
	pages = {737--765},
	publisher = {Deutsche Mathematiker-Vereinigung, Berlin},
	title = {The well-posedness of the {C}auchy problem for the Dirac operator on globally hyperbolic manifolds with timelike boundary},
	volume = {25},
	year = {2020},
}

\bib{GN}{article}{
	author = {{Große}, Nadine},
	author={{Nakad}, Roger},
	issn = {0024-6115},
	journal = {{Proc. Lond. Math. Soc. (3)}},
	number = {4},
	pages = {946--974},
	publisher = {John Wiley \& Sons, Chichester; London Mathematical Society, London},
	title = {Boundary value problems for noncompact boundaries of \(\mathrm{Spin}^{\mathrm{c}}\) manifolds and spectral~estimates},
	volume = {109},
	year = {2014},
}

\bib{Hijazi}{article}{
	author = {{Hijazi}, Oussama},
	issn = {0010-3616},
	journal = {{Commun. Math. Phys.}},
	pages = {151--162},
	publisher = {Springer, Berlin/Heidelberg},
	title = {A conformal lower bound for the smallest eigenvalue of the {D}irac operator and {K}illing spinors},
	volume = {104},
	year = {1986},
}

\bib{Holzegel}{article}{
	author = {{Holzegel}, Gustav},
	issn = {0219-8916},
	journal = {{J. Hyperbolic Differ. Equ.}},
	number = {2},
	pages = {239--261},
	publisher = {World Scientific, Singapore},
	title = {Well-posedness for the massive wave equation on asymptotically anti-de {S}itter spacetimes},
	volume = {9},
	year = {2012},
}

\bib{LM}{book}{
	author = {Lawson, H. Blaine},
	author={ Michelsohn, Marie-Louise},
	isbn = {0-691-08542-0},
	pages = {xii+427},
	publisher = {Princeton University Press, Princeton, NJ},
	series = {Princeton Mathematical Series},
	title = {Spin geometry},
	volume = {38},
	year = {1989}
}

\bib{Lesch2}{incollection}{
	Author = {Lesch, Matthias},
	Title = {The uniqueness of the spectral flow on spaces of unbounded self-adjoint {Fredholm} operators},
	BookTitle = {Spectral geometry of manifolds with boundary and decomposition of manifolds. Proceedings of the workshop, Roskilde, Denmark, August 6--9, 2003},
	ISBN = {0-8218-3536-X},
	Pages = {193--224},
	Year = {2005},
	Publisher = {Providence, RI: American Mathematical Society (AMS)},
	Language = {English},
}

\bib{Oneill}{book}{
   author={O'Neill, Barrett},
   title={Semi-Riemannian geometry. With applications to relativity},
   series={Pure and Applied Mathematics},
   volume={103},
   publisher={Academic Press, New York},
   date={1983},
   pages={xiii+468},
   isbn={0-12-526740-1},
}

\bib{ReedSimon1}{book}{
	Author = {Reed, Michael},
	Author={Simon, Barry},
	Title = {Methods of modern mathematical physics. {I}: {Functional} analysis. {Rev}. and enl. ed},
	Year = {1980},
	Language = {English},
	HowPublished = {New {York} etc.: {Academic} {Press}, {A} {Subsidiary} of {Harcourt} {Brace} {Jovanovich}, {Publishers}, {XV}, 400 p. \$ 24.00 (1980).},
}

\bib{ReedSimon2}{book} {
	AUTHOR = {Reed, Michael},
	author={Simon, Barry},
	TITLE = {Methods of modern mathematical physics. {II}. {F}ourier
		analysis, self-adjointness},
	PUBLISHER = {Academic Press [Harcourt Brace Jovanovich, Publishers], New
		York-London},
	YEAR = {1975},
	PAGES = {xv+361},
}

\bib{SW}{unpublished}{
	author = {Shen, Dawei},
	author={ Wrochna, Micha{\l}},
	note = {accepted in {P}ure and {A}pplied {A}nalysis; \href{https://arxiv.org/abs/2104.02816v3}{arXiv:2104.02816}},
	title = {An index theorem on asymptotically static spacetimes with compact {C}auchy surfaces},
	year = {2021}
}

\bib{CStar}{unpublished}{
	author={Shirbisheh, Vahid},
	title={Lectures on {C}*-algebras},
	year={2012},
	note={Lecture notes; \href{https://arxiv.org/abs/1211.3404}{arXiv:1211.3404}},
}

\bib{Sowkorsky}{article}{
	author = {{Socolovsky}, Miguel},
	issn = {0188-7009},
	journal = {{Adv. Appl. Clifford Algebr.}},
	note = {Id/No 18},
	number = {1},
	pages = {33},
	publisher = {Springer (Birkh{\"a}user), Basel},
	title = {Schwarzschild black hole in anti-de {S}itter space},
	volume = {28},
	year = {2018},
}

\bib{vdD}{article}{
	author = {{van den Dungen}, Koen},
	issn = {0022-2488},
	journal = {{J. Math. Phys.}},
	number = {6},
	pages = {063507, 21},
	publisher = {AIP Publishing, Melville, NY; American Institute of Physics (AIP), College Park, MD},
	title = {Families of spectral triples and foliations of space(time)},
	volume = {59},
	year = {2018},
}

\bib{Vasy}{article}{
	author = {{Vasy}, Andr{\'a}s},
	issn = {2157-5045},
	journal = {{Anal. PDE}},
	number = {1},
	pages = {81--144},
	publisher = {Mathematical Sciences Publishers (MSP), Berkeley, CA},
	title = {The wave equation on asymptotically anti de {S}itter spaces},
	volume = {5},
	year = {2012},
}

\bib{Wrochna}{article}{
	author = {{Wrochna}, Micha{\l}},
	issn = {0377-9017},
	journal = {{Lett. Math. Phys.}},
	number = {12},
	pages = {2291--2331},
	publisher = {Springer Netherlands, Dordrecht},
	title = {The holographic {H}adamard condition on asymptotically anti-de {S}itter spacetimes},
	volume = {107},
	year = {2017},
}

\end{biblist}
\end{bibdiv}
\end{document}